\apptocmd{\sloppy}{\hbadness 10000\relax}{}{}
\newtheorem{theorem}{Theorem}[section]
\newtheorem{corollary}[theorem]{Corollary}
\newtheorem{lemma}[theorem]{Lemma}
\numberwithin{equation}{section}
\newcommand{\s}{\mathfrak{s}}
\newcommand{\hdet}{\operatorname{\mathsf{Det}}}
\newcommand{\muf}{\mu}
\newcommand{\amc}{\mathcal{H}}
\newcommand{\cn}{\nu}
\newcommand{\gn}{N}
\newcommand{\gf}{F^{\sharp}}
\newcommand{\Om}{\Omega}
\newcommand{\amg}{\mathscr{A}}
\newcommand{\Grea}{G_{\rea}}
\newcommand{\stw}{\mathbb{W}}
\newcommand{\ster}{\ste_{\rea}}
\newcommand{\sing}{\mathbb{S}}
\newcommand{\pv}{(G, \ste, \rho)}
\newcommand{\pvr}{(G_{\rea}, \ste_{\rea}, \rho)}
\newcommand{\imt}{\iota}
\newcommand{\A}{\mathsf{A}}
\newcommand{\U}{\mathsf{U}}
\newcommand{\kc}{\mathsf{c}}
\newcommand{\quat}{\mathbb{H}}
\newcommand{\lc}{\Sigma}
\renewcommand{\H}{\mathsf{H}}
\newcommand{\Hc}{\mathsf{H}^{\com}}
\newcommand{\pol}{\mathsf{Pol}}
\newcommand{\om}{\omega}
\newcommand{\hess}{\operatorname{\mathsf{Hess}}}
\newcommand{\vol}{\mathsf{vol}}
\newcommand{\ka}{\kappa}
\renewcommand{\part}{\vdash}
\newcommand{\rad}{\mathbb{E}}
\newcommand{\Ga}{\Gamma}
\newcommand{\nm}{\mathsf{W}}
\newcommand{\lap}{\Delta}
\renewcommand{\j}{\mathsf{i}}
\newcommand{\la}{\lambda}
\newcommand{\ep}{\epsilon}
\newcommand{\reat}{\mathbb{R}^{\times}}
\newcommand{\comt}{\mathbb{C}^{\times}}
\newcommand{\ext}{\Omega}
\newcommand{\cinf}{C^{\infty}}
\newcommand{\eno}{\text{End}}
\newcommand{\si}{\sigma}
\newcommand{\pr}{\partial}
\newcommand{\bpr}{\bar{\partial}}
\newcommand{\sign}{\operatorname{sgn}}
\newcommand{\bnabla}{\bar{\nabla}}
\newcommand{\integer}{\mathbb{Z}}
\newcommand{\en}{[\nabla]}
\newcommand{\lie}{\mathfrak{L}}
\newcommand{\Aff}{\mathbb{Aff}}
\newcommand{\aff}{\mathbb{aff}}
\newcommand{\lb}{\langle}
\newcommand{\ra}{\rangle}
\newcommand{\ste}{\mathbb{V}}
\newcommand{\al}{\alpha}
\newcommand{\be}{\beta}
\newcommand{\ga}{\gamma}
\newcommand{\hnabla}{\widehat{\nabla}}
\newcommand{\tnabla}{\tilde{\nabla}}
\newcommand{\gl}{\mathfrak{gl}}
\newcommand{\eul}{\mathbb{X}}
\newcommand{\proj}{\mathbb{P}}
\newcommand{\pfaff}{\operatorname{Pf}}
\DeclareMathOperator{\Aut}{Aut}
\newcommand{\g}{\mathfrak{g}}
\newcommand{\ad}{\text{ad}}
\newcommand{\tensor}{\otimes}
\newcommand{\rea}{\mathbb R}
\newcommand{\com}{\mathbb C}
\begin{document}
\title{Functions dividing their Hessian determinants and affine spheres}
\author{Daniel J.~F. Fox} 
\address{Departamento de Matemáticas del Área Industrial\\ Escuela Técnica Superior de Ingeniería y Diseño Industrial\\ Universidad Politécnica de Madrid\\Ronda de Valencia 3\\ 28012 Madrid España}
\email{daniel.fox@upm.es}
\thanks{Acknowledgment: I thank Roland Hildebrand for comments on a preliminary version of this paper.}

\begin{abstract}
The nonzero level sets of a homogeneous, logarithmically homogeneous, or translationally homogeneous function are affine spheres if and only if the Hessian determinant of the function is a multiple of a power or an exponential of the function. In particular, the nonzero level sets of a homogeneous polynomial are proper affine spheres if some power of it equals a nonzero multiple of its Hessian determinant. The relative invariants of real forms of regular irreducible prehomogeneous vector spaces yield many such polynomials which are moreover irreducible. For example, the nonzero level sets of the Cayley hyperdeterminant are affine spheres. 
\end{abstract}

\maketitle

\section{Introduction}
Let $\hnabla$ be the standard flat affine connection on $\rea^{n+1}$ and fix a $\hnabla$-parallel volume form $\Psi$. Elements of the subgroup of the group $\Aff(n+1, \rea)$ of affine transformations of $\rea^{n+1}$ (comprising the automorphisms of $\hnabla$) preserving the tensor square $\Psi^{2}$ are called \textit{unimodular} or \textit{equiaffine}. For $F \in C^{k}(\rea^{n+1})$ let $F_{i_{1}\dots i_{k}} = \hnabla_{i_{1}}\dots\hnabla_{i_{k-1}}dF_{i_{k}}$, and let $g_{ij} = (\hess F)_{ij} = F_{ij} = \hnabla_{i}dF_{j}$ be the \textit{Hessian} of $F$. Here, as generally in what follows, the abstract index and summation conventions are employed. As $\det \hess F$ and $\Psi^{2}$ are $2$-densities, it makes sense to define the \textit{Hessian determinant} $\H(F)$ of a $C^{2}$ function $F$ by $\det \hess F = \H(F)\Psi^{2}$. In coordinates $x^{1}, \dots, x^{n}$ such that the coframe $dx^{1}, \dots, dx^{n+1}$ is $\hnabla$-parallel and $\Psi = dx^{1}\wedge \dots \wedge dx^{n+1}$, $\H(F) = \det \tfrac{\pr^{2}F}{\pr x^{i}\pr x^{j}}$. The adjugate tensor $U^{ij}$ of $F_{ij}$ is the symmetric bivector satisfying $U^{ip}F_{pj} = \H(F)\delta_{j}\,^{i}$. Where $\H(F) \neq 0$, $g_{ij}$ is a pseudo-Riemannian metric with inverse symmetric bivector $g^{ij}  = \H(F)^{-1}U^{ij}$. Define
\begin{align}\label{ufdet}
\U(F) = U^{ij}F_{i}F_{j} =  \H(F)|dF|^{2}_{g}  =  \begin{vmatrix} F_{ij} & 0 \\ F_{j}&  |dF|_{g}^{2}\end{vmatrix} = -\begin{vmatrix} F_{ij} & F_{i} \\ F_{j} & 0 \end{vmatrix}, 
\end{align}
where $|dF|_{g}^{2} = g^{ij}F_{i}F_{j}$ is not necessarily positive, since $g_{ij}$ is not necessarily positive definite. 
Consider the problem of finding a smooth function $F$ on an open domain of $\rea^{n+1}$ such that one or both of $\H(F)$ or $\U(F)$ is constant along the level sets of $F$, or, what is almost the same, there is a function $\phi(r)$ or $\psi(r)$ defined for $r$ in some open connected subset of $\rea$ such that
\begin{align}\label{mai}
&\H(F) = \phi(F),& &\text{or}& 
&\U(F) = \varphi(F).
\end{align}
Interesting choices for $\phi$ and $\varphi$ include polynomials, rational powers, and exponentials.

For $F \in \cinf(\rea^{n+1})$ and $g \in \Aff(n+1, \rea)$ define $(g \cdot F)(x) = F(g^{-1}x)$. Let $\ell:\Aff(n+1, \rea) \to GL(n+1, \rea)$ be the projection onto the linear part. Because of the identities 
\begin{align}\label{hgf}
&g\cdot \H(F) = \det{}^{2}\ell(g) \H(g\cdot F),& &g\cdot \U(F) = \det{}^{2} \ell(g) \U(g\cdot F),
\end{align}
(the second follows from \ref{ufdet}), the equations \eqref{mai} are affinely covariant in the sense that $F$ solves \eqref{mai} for some $\phi$ or $\varphi$ if and only if its precomposition with an affine transformation solves the same equation for some positive constant multiple of $\phi$ or $\varphi$. In particular, it is natural to consider solutions of \eqref{mai} up to unimodular affine equivalence. Moreover, the affine covariance suggests also that properties of the equations \eqref{mai} should be reflected in the unimodular affine geometry of the level sets of $F$. This suggests that when $F$ is replaced by $\psi \circ F$ for some $C^{2}$ function $\psi$ on some interval $I \subset \rea$ intersecting the image of $F$, then $\psi \circ F$ should also solve equations of the form \eqref{mai}, as the level sets of $\psi \circ F$ are the same as those of $F$, only differently parameterized. While because of the identities
\begin{align}\label{hpsif}
&\H(\psi \circ F) = \dot{\psi}^{n+1}(1 + (\ddot{\psi}/\dot{\psi})|dF|^{2}_{g})\H(F),&& 
\U(\psi \circ F) = \dot{\psi}^{n+2}\U(F),
\end{align}
in complete generality this need not be the case, if the form of either $\psi$ or $F$ is restricted, then it will sometimes be so, e.g. if $F$ has some kind of homogeneity. 

It is an interesting general problem to determine up to affine equivalence all sufficiently smooth solutions of \eqref{mai} on a domain $\Om \subset \rea^{n+1}$ for particular choices of $\phi$ or $\varphi$, e.g. when $\phi$ or $\varphi$ is a power or an exponential, and for particular choices of $\Om$. Of particular interest are solutions which are \textit{entire}, meaning defined on all of $\rea^{n+1}$, and domains which are \textit{cones}, meaning $e^{t}x \in\Om$ whenever $x \in \Om$ and $t \in \rea$. One aim in what follows is to identify some special cases of \eqref{mai} admitting nice solutions that can serve as candidates for answers to such characterization questions. A particularly interesting case of \eqref{mai} is the equation expressing that the Hessian determinant of a rational or polynomial function equal a multiple of a power of the function. The relative invariants of certain prehomogeneous vector spaces give many examples, as is explained later in the introduction.

The first result explicitly linking the equiaffine geometry of the level sets of $F$ with \eqref{mai} is Theorem \ref{ahtheorem} below that shows that any solution of \eqref{mai} with $\phi$ or $\psi$ a constant multiple of a power or an exponential and $F$ having appropriate homogeneity yields a one-parameter family of affine spheres. The precise statement requires some notation and terminology. An \textit{affine dilation} is an affine transformation mapping every line into a parallel line. An affine transformation is a dilation if and only if its linear part is a nonzero multiple of the identity, in which case it is a composition of a central homothety and a translation (which is regarded as a homothety with center at infinity). A one-parameter subgroup comprising affine dilations with a fixed center is either a one-parameter family $x \to e^{\la t}(x - v) + v$ of central homotheties with center $v$, or a one-parameter family $x \to x + \al t w$ of translations. A function $F \in C^{0}(\Om)$ is \textit{affinely homogeneous} on $\Om$ if there are one-parameter subgroups $\phi_{t} \in \Aff(n+1, \rea)$ and $\psi_{t} \in \Aff(1, \rea)$ of affine dilations with fixed center such that $F \circ \phi_{t}(x) = \psi_{t}\circ F(x)$ whenever $x$ and $\phi_{t}(x)$ are in $\Om$. Necessarily $\psi_{t}$ has the form $\psi_{t}(r) = e^{\la t}(r - r_{0}) + r_{0} + \al t$ where $\la \al = 0$; that is, at least one of $\la$ and $\al$ is $0$. Affine homogeneity is an affinely invariant condition in the sense that $F$ is affinely homogeneous if and only if $g\cdot F$ is affinely homogeneous for all $g \in \Aff(n+1, \rea)$. There are essentially four kinds of affinely homogeneous functions depending on whether each of $\phi_{t}$ and $\psi_{t}$ comprises central homotheties or translations. When both $\phi_{t}$ and $\psi_{t}$ comprise translations, the affine homogeneity simply means that the restriction of $F$ to any line with a given direction is an affine map. In this case the Hessian of $F$ is degenerate in the given direction, so this case requires a different treatment.

Let $\Om \subset \rea^{n+1}$ be an open subset, and for $F \in C^{0}(\Om)$, let $\lc_{r}(F, \Om) = \{x \in \Om: F(x) = r\}$. For $\la, \al \in \rea$ and $\ep \in \{0, 1\}$ define $\amg^{\la, \ep}_{\al}(\Om)$ to comprise those $F \in C^{0}(\Om) \cap \cinf(\Om \setminus \lc_{-\al/\la}(F, \Om))$ for which there exists a $v \in \rea^{n+1}$ such that $F((1-\ep) (e^{t}(x- v)+ v) + \ep (x + t v)) = e^{\la t}F(x) + \al t$ for all $t \in \rea$ and $x \in \Om$ such that $(1-\ep)( e^{t}(x - v) + v) + \ep (x + t v) \in \Om$. When $\la = 0$, $\lc_{-\al/\la}(F, \Om)$ is by convention empty.  Sometimes there will be written simply $F \in \amg^{\la, \ep}_{\al}$; in this case it is to be understood that $F$ is a $\cinf$ smooth function having the indicated homogeneity property on some open domain in $\rea^{n+1}$. Since $F \in \amg^{\la, 0}_{\al}$ satisfies $F(x) = F(e^{-t}e^{t}(x- v) + v) = F(x) + \al (e^{-\la t} - 1)t$, the condition defining $\amg^{\la, 0}_{\al}$ is vacuous unless $\al \la = 0$, that is, unless at least one of $\al$ and $\la$ is $0$. An element of $\amg^{\la, 0}_{0}$ is \textit{positively homogeneous} of \textit{degree} $\la$ (with center $v$), an element of $\amg^{0, 0}_{\al}$ is \textit{$\al$-logarithmically homogeneous} (with center $v$), and an element of $\amg^{\la, 1}_{0}$ is \textit{$\la$-translationally homogeneous} (with \textit{axial} center $v$). As remarked above, the Hessian of an element of $\amg^{0, 1}_{\al}$ is necessarily degenerate. However, since the exponential of an element of $\amg^{0, 1}_{\al}$ is an element of $\amg^{\al, 1}_{0}$, there is no need to consider the class $\amg^{0, 1}_{\al}$ as such. For the centrally homogeneous cases, it is often convenient to regard the center $v$ of the dilations as the origin $0$ in $\rea^{n+1}$. If $0$ is fixed initially, this can always be arranged by precomposing $F$ with translation by $-v$. 

Let $\Sigma$ be a co-orientable immersed hypersurface in $\rea^{n+1}$. Via the splitting $T\rea^{n+1} = T\Sigma \oplus \lb N\ra$ determined by a vector field $N$ transverse to $\Sigma$, the connection $\hnabla$ induces on $\Sigma$ a connection $\nabla$, a symmetric covariant two tensor $h$ representing the second fundamental form, a shape operator $S \in \Ga(\eno(T\Sigma))$, and the connection one-form $\tau \in \Ga(T^{\ast}\Sigma)$; these are defined by $\hnabla_{X}Y = \nabla_{X}Y + h(X, Y)N$ and $\hnabla_{X}N = -S(X) + \tau(X)N$, where $X$ and $Y$ are tangent to $\Sigma$. Tensors on $\Sigma$ are labeled using capital Latin abstract indices. That $\Sigma$ be \textit{nondegenerate} means that the second fundamental form of $\Sigma$, equivalently $h_{IJ}$, is nondegenerate. Since by assumption $\Sigma$ is co-oriented, it is orientable, and the interior multiplication $\imt(N)\Psi$ is a volume form on $\Sigma$. Since $\hnabla \Psi = 0$, for $X$ tangent to $\Sigma$, $\hnabla_{X} \imt(N)\Psi = \tau(X)\imt(N)\Psi$. Let $\vol_{h} = q\imt(N)\Psi$ be the volume form induced on $\Sigma$ by $h$ and the orientation consistent with $\imt(N)\Psi$. Since $\vol_{h}^{2} = |\det h|$,
\begin{equation}\label{deth}
h^{PQ}\nabla_{I}h_{PQ} = 2\vol_{h}^{-1}\nabla_{I}\vol_{h}  = 2\left(q^{-1}dq_{I} + \tau_{I} \right).
\end{equation}
Any other transversal to $\Sigma$ has the form $\tilde{N} = a(N + Z)$ for a nowhere vanishing function $a$ and a vector field $Z$ tangent to $\Sigma$. The second fundamental form $\tilde{h}$, connection $\tnabla$, and connection one-form $\tilde{\tau}$ determined by $\tilde{N}$ and $\hnabla$ are related to $h$, $\nabla$, and $\tau$ by
\begin{align}\label{transform}
&\tilde{h}_{IJ} = a^{-1}h_{IJ},& &\tnabla = \nabla - h_{IJ}Z^{K}, & &\tilde{\tau}_{I} = \tau_{I} + a^{-1}da_{I} + h_{IP}Z^{P}.
\end{align}
It follows from \eqref{deth} and \eqref{transform} that 
\begin{equation}\label{normalize}
n\tilde{\tau}_{I} + \tilde{h}^{PQ}\tnabla_{I}\tilde{h}_{PQ} = n \tau_{I} + h^{PQ}\nabla_{I}h_{PQ} + (n+2)Z^{P}h_{IP}, 
\end{equation}
where $h^{IJ}$ and $\tilde{h}^{IJ}$ are the symmetric bivectors inverse to $h_{IJ}$ and $\tilde{h}_{IJ}$. Since \eqref{normalize} does not depend on $a$, the span of $\tilde{N}$ is determined by requiring $n\tilde{\tau}_{I} =- \tilde{h}^{PQ}\tnabla_{I}\tilde{h}_{PQ}$, so that, by \eqref{deth} and \eqref{normalize},
\begin{equation}\label{zdet}
Z^{P}h_{PI} =  -\tfrac{1}{n+2}\left(n\tau_{I} + h^{PQ}\nabla_{I}h_{PQ}\right) = -\tau_{I} - \tfrac{2}{n+2}q^{-1}dq_{I}= -\tfrac{1}{2}h^{PQ}\nabla_{I}h_{PQ} + \tfrac{1}{n+2}q^{-1}dq_{I}.
\end{equation}
Whatever is $a$, the resulting transversal $\tilde{N}$ is called an \textit{affine normal}, and the line field it spans is called the \textit{affine normal distribution of $\Sigma$}. Since $\det \tilde{h} = a^{-n}\det h$, the \textit{equiaffine normal} $\nm = a(N + Z)$ is determined up to sign by requiring $|\vol_{\tilde{h}}| = |\imt(\nm)\Psi|$. By \eqref{deth}, $(n+2)\tilde{\tau}_{I} = 2\tilde{\tau}_{I} - \tilde{h}^{PQ}\tnabla_{I}\tilde{h}_{PQ} = 0$, so for the equiaffine normal the associated connection one-form vanishes. 

Once a co-orientation has been fixed, the pseudo-Riemannian metric $h$ and endomorphism $S$ determined by the co-oriented equiaffine normal are called the \textit{equiaffine metric} and the \textit{equiaffine shape operator}, respectively, and the \textit{affine mean curvature} is $\amc = (1/n)S_{I}\,^{I}$. A co-orientable nondegenerate connected hypersurface $\Sigma$ is an \textit{affine sphere} if and only if the shape operator determined by any affine normal is a multiple of the identity; that is, $S_{I}\,^{J} = n\amc\delta_{I}\,^{J}$. If $\Sigma$ is an affine sphere, then the affine mean curvature is constant, as follows from the traced form $\nabla_{Q}S_{I}\,^{I} = \nabla_{I}S_{Q}\,^{I}$ of the Gauss-Codazzi equations $\nabla_{[I}S_{J]}\,^{K} = 0$ for the equiaffine normal. An affine sphere is \textit{proper} or \textit{improper} according to whether its affine mean curvature is nonzero or zero. Clearly $\Sigma$ is an improper affine sphere if and only if the equiaffine normals are parallel. In this case $\Sigma$ is said to have \textit{center at infinity}. Similarly, $\Sigma$ is a proper affine sphere if and only if the equiaffine normals meet in a point, the \textit{center}. Precisely, if $x \in \Sigma$ then this point is $v = \eul_{x} + \amc^{-1}\nm_{x}$, where $\eul$ is the generates the radial dilations around 0, so that $\nm  = -\amc\rad$ where $\rad = \eul - v$. 

Theorem \ref{ahtheorem} can be summarized imprecisely as saying that the level sets of an affinely homogeneous function $F$ with a given center $v$ are affine spheres with center $v$ if and only if $\H(F)$ is a function of $F$. Understood in the most liberal way the forward implication of this statement is not strictly correct, because the level sets of $F$ need not be connected and a priori it is not obvious what it means for a disconnected hypersurface to be an affine sphere.

The following simple example indicates the need for some care regarding connectedness. Let $G(x) = x_{1}^{2} - x_{2}^{2} - x_{3}^{2}$. While $F(x) = G(x)^{2}$ solves $\H(F)^{2} = 2^{12}3^{2}F^{3}$, there is no $\phi$ such that $\H(F) = \phi(F)$ on all of $\rea^{n+1}$. On the other hand, on the open domains $\Om_{\pm} = \{x\in \rea^{n+1}: \pm G(x) > 0\}$ the function $F$ solves $\H(F) = \pm 2^{6}3F^{3/2}$. Each nonzero level set of $F$ is a union of a two-sheeted hyperboloid and a one-sheeted hyperboloid. Although each connected component is a proper affine sphere, the equiaffine metric of the two-sheeted hyperboloid is definite, while that of the one-sheeted hyperboloid is split. While it is reasonable to regard the two-sheeted hyperboloid as a single disconnected affine sphere, it is not reasonable to regard the level set of $F$ containing them as a disconnected affine sphere, because its components are affine spheres of different types.

The definition of a connected affine sphere does not require a choice of co-orientation, but some coherence condition on co-orientations is necessary when there are multiple connected components. The convention employed throughout this paper is the following. A smoothly immersed hypersurface having more than one connected component is an affine sphere if 
\begin{enumerate}
\item each connected component is an affine sphere and the centers of the different components are all the same, meaning that either the affine normal lines all meet in a common point, or are all parallel; 
\item there is a choice of co-orientations of the components so that for the equiaffine normal consistent with this choice the affine mean curvatures and the signatures modulo $4$ of the equiaffine metrics of the different components are all the same.
\end{enumerate} 
The signature of a nondegenerate symmetric bilinear form means the number of positive eigenvalues minus the number of negative eigenvalues. Over $\rea$ two nondegenerate symmetric bilinear forms have the same signature modulo $4$ if and only if their determinants have the same sign. Notice that if a disconnected hypersurface is an affine sphere with respect to a given choice of co-orientations of the components, it is an affine sphere with respect to the opposite choice of co-orientations, but with respect to no other choice of co-orientations. In this sense, the definition is consistent with the definition for a connected hypersurface. With this definition the nonzero level sets of $G  = x_{1}^{2} - x_{2}^{2} - x_{3}^{2}$ are affine spheres, while the nonzero level sets of $F = G^{2}$ are not. The stipulation modulo $4$ in the definition is necessary. For example, the determinant $P(X)$ of a $3 \times 3$ symmetric matrix $X$ satisfies $\H(P) = -16P^{2}$ and the level set $P(X) = 1$ is a disconnected affine sphere having connected components with equiaffine metrics of signatures $5$ and $1$.

Write $\sign(r) = r|r|^{-1}$ for the sign character of the group $\reat$ of nonzero real numbers. Define the \textit{standard co-orientation} of a connected component of a level set of $F$ to be that consistent with the vector field $-\sign(|dF|^{2}_{g})F^{i} = -\sign(\U(F)\H(F))F^{i}$, where $F^{i} = g^{ip}F_{p}$. 

\begin{theorem}\label{ahtheorem}
Let $\ep \in \{0, 1\}$ and let $\al$ and $\la$ be real constants not both $0$ such that $\al \la = 0$, $\al \ep = 0$, and $\la \neq 1 - \ep$. Let $\Om \subset \rea^{n+1}$ be a nonempty open subset, and let $I \subset \rea$ be a nonempty, connected, open subset such that $\la r + \al \neq 0$ for all $r \in I$. For $F \in \amg^{\la, \ep}_{\al}(\Om)$, let $\Om_{I} = F^{-1}(I)\cap \Om$. Let $v \in \rea^{n+1}$ be the center or axial center of $F$ as $\ep = 0$ or $\ep = 1$. The conditions
\begin{enumerate}
\item \label{aht2} There is a nonvanishing function $\phi:I \to \rea$ such that $F$ solves $\H(F) = \phi(F)$ on $\Om_{I}$;
\item \label{aht3} There is a nonvanishing function $\psi:I \to \rea$ such that $F$ solves $\U(F) = \psi(F)$ on $\Om_{I}$;
\end{enumerate}
are equivalent. If $\ep = 0$ the conditions \eqref{aht2} and \eqref{aht3} are equivalent to the condition
\begin{enumerate}
\setcounter{enumi}{2}
\item \label{aht1proper} For all $r \in I$ each level set $\lc_{r}(F, \Om_{I})$, equipped with the co-orientation of its components consistent with $-\sign(\U(F)\H(F))F^{i}$, is a proper affine sphere with center $v$,
\end{enumerate}
while if $\ep = 1$ the conditions \eqref{aht2} and \eqref{aht3} are equivalent to the condition
\begin{enumerate}
\setcounter{enumi}{3}
\item \label{aht1improper} For all $r \in I$ each level set $\lc_{r}(F, \Om_{I})$, equipped with the co-orientation of its components consistent with $-\sign(\U(F)\H(F))F^{i}$, is an improper affine sphere with equiaffine normal equal to $cv$ for a constant $c$ depending only on $r$ (and not the connected component).
\end{enumerate}
When these conditions hold, there is a nonzero constant $B$ such that $\phi$ and $\psi$ have the forms 
\begin{align}\label{phiforms}
\begin{split}
\phi(r) &= B|r|^{(n+1)(\la - 2)/\la}\,\,  \text{and}\,\, \psi(r) = \tfrac{\la}{\la - 1}r\phi(r)\,\, \text{if}\,\,\al = 0,\, \ep =0 \\
\phi(r) &= Be^{-2(n+1)r/\al}\,\,  \,\quad\text{and}\,\, \psi(r) = -\al\phi(r)\,\, \,\,\,\,\,\text{if}\,\, \la = 0, \,\ep = 0,\\
\phi(r) &= Br^{n+1}\,\,  \text{and}\,\, \psi(r) = r\phi(r)\,\, \text{if}\,\, \al = 0, \,\ep = 1 \\
\end{split}
\end{align} 
and the affine mean curvature of $\lc_{r}(F, \Om_{I})$ is 
\begin{align}\label{ahmc}
\begin{split}
&\sign(\la r)|\la - 1|^{-1/(n+2)}|\la|^{-(n+1)/(n+2)}|B|^{1/(n+2)}|r|^{(\la - 2)/\la - (n+1)/(n+2)} \,\,\,  \text{if}\,\, \al = 0,\, \ep = 0;\\
&\sign(\al)|\al|^{-(n+1)/(n+2)}|B|^{1/(n+2)}e^{-2(n+1)r/\al(n+2)}\qquad\qquad\qquad\qquad\quad\,\,  \text{if}\,\,\la = 0, \,\ep = 0,
\end{split}
\end{align}
or $0$ if $\al = 0$ and $\ep = 1$.
\end{theorem}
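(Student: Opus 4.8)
The plan is to distill the homogeneity of $F$ into one algebraic identity linking $\H(F)$ and $\U(F)$, and then to recognize the radial vector field (or, when $\ep=1$, the constant field $v$) as the equiaffine normal of the level sets exactly when condition \eqref{aht2} holds. Translate so that the center $v$ is the origin, write $\eul=\sum_i x^i\partial_i$ for the radial field, and put $(\xi,c)=(\eul,\la-1)$ if $\ep=0,\al=0$; $(\xi,c)=(\eul,-1)$ if $\ep=0,\la=0$; and $(\xi,c)=(v,\la)$ if $\ep=1$. First I would differentiate the defining relation of $\amg^{\la,\ep}_{\al}$ in the base point to get $F_{ij}\xi^j=cF_i$ (the hypothesis $\la\neq1-\ep$ is exactly $c\neq0$); contracting again with $\xi$ and using $F_p\xi^p=\la F+\al$ gives $\hess F(\xi,\xi)=c(\la F+\al)$ and $\hess F(e,\xi)=c\,dF(e)$, which vanishes for $e$ tangent to a level set. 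Contracting $F_{ij}\xi^j=cF_i$ with the adjugate and using $U^{ip}F_{pj}=\H(F)\delta_j{}^i$ yields $U^{ij}F_j=c^{-1}\H(F)\xi^i$, hence
\begin{align*}
\U(F)=U^{ij}F_iF_j=c^{-1}(\la F+\al)\,\H(F)
\end{align*}
pointwise wherever $F$ is twice differentiable. Since $\la r+\al\neq0$ on $I$, this is precisely the equivalence $\eqref{aht2}\Leftrightarrow\eqref{aht3}$ with $\psi$ as in \eqref{phiforms}; moreover $F^i:=g^{ij}F_j=c^{-1}\xi^i$ wherever $\H(F)\neq0$, so the co-orientation field $-\sign(\U(F)\H(F))F^i$ is a constant-sign multiple of $\xi$ along each level set, nonzero by Lemma \ref{homognondegenlemma}.

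Next, for the form of $\phi$, I would apply \eqref{hgf} to the one-parameter group $\phi_t$ of dilations generating the homogeneity of $F$ to get $\H(F)\circ\phi_t=e^{mt}\H(F)$, with $m=(n+1)(\la-2)$, $m=-2(n+1)$, or $m=(n+1)\la$ in the three cases; assuming \eqref{aht2} and substituting $F\circ\phi_t=e^{\la t}F+\al t$ turns this into a functional equation for $\phi$ on $I$. Since $I$ is connected and does not meet the zero of $r\mapsto\la r+\al$, the equation forces $\phi$ into one of the forms in \eqref{phiforms} with a single nonzero constant $B$ (and then $\psi$ by the identity above).

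The geometric heart comes next. Because $dF(\xi)=\la F+\al\neq0$ on $\Om_I$, $\xi$ is transverse to each level set $\lc_r$, and because $\hnabla_X\eul=X$ (resp.\ $\hnabla_X v=0$) it is an equiaffine transversal with second fundamental form $h_\xi=-(\la F+\al)^{-1}\hess F|_{T\lc_r}$; extending a frame $e_1,\dots,e_n$ of $T\lc_r$ by $\xi$ and using the block structure above, $\det[h_\xi(e_a,e_b)]=(\la F+\al)^{-n}(\det[e_1|\cdots|e_n|\xi])^2\H(F)/\hess F(\xi,\xi)$, so $\lc_r$ is nondegenerate iff $\H(F)\neq0$. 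Assuming \eqref{aht2}, $\H(F)=\phi(F)$ is nonvanishing and, being a function of $F$, constant on each $\lc_r$, hence so is $|f|:=(|\H(F)|\,|c|^{-1}|\la F+\al|^{-(n+1)})^{1/(n+2)}$; choosing the sign of $f$ to match the standard co-orientation, $Y:=f\xi$ is transverse, equiaffine, and meets the Blaschke volume normalization by the choice of $|f|$, so $Y$ is the equiaffine normal of Section \ref{ufsection}. When $\ep=0$, $Y=f\eul$ meets $v=0$, so $\lc_r$ is a proper affine sphere with center $v$, affine mean curvature $-f$, and the common values of $-f$ and of $\sign\det h_Y$ over the components make the multi-component convention of the Introduction hold; when $\ep=1$, $|f|^{n+2}=|B|\,|\la|^{-(n+2)}$ is constant, so $Y=fv$ is constant, an improper affine sphere with normal $cv$ and $\amc=0$. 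Conversely, an affine sphere with center $v$ has equiaffine normal $Y=f\eul$ (resp.\ $Y=fv$); equiaffinity makes $f$ locally constant on $\lc_r$, the affine-sphere convention forces $\amc=-f$, hence $|f|$ and $\sign\det h_Y$, hence $\sign\H(F)$, to agree over all components, and the Blaschke normalization gives $|\H(F)|=|c|\,|\la F+\al|^{n+1}|f|^{n+2}$ along $\lc_r$; so $\H(F)$ is a nonvanishing function of $F$, i.e.\ \eqref{aht2} holds (for $\ep=1$ one invokes Lemma \ref{improperlemma}). Feeding \eqref{phiforms} into $|f|$ and $\amc=-f$, with the sign read off from the co-orientation, then yields \eqref{ahmc}.

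The hard part will be this last step: verifying that $Y=f\xi$ satisfies the Blaschke normalization exactly (the block-diagonalization of $\hess F$ in a level-set frame and the matching of the induced and metric volume forms), and, above all, the sign and signature bookkeeping — that co-orienting the components of $\lc_r$ by $-\sign(\U(F)\H(F))F^i$ is consistent across components so that the disconnected-affine-sphere convention of the Introduction applies, that $\H(F)$ is recovered with its sign (not merely $|\H(F)|$) from the affine-sphere hypothesis, and that the factors $\sign(\la r)$ and $\sign(\al)$ land correctly in \eqref{ahmc}.
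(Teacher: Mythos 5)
Your proposal is correct, and its algebraic skeleton coincides with the paper's: your identities $F_{ij}\xi^{j}=cF_{i}$, $\U(F)=c^{-1}(\la F+\al)\H(F)$, and $F^{i}=c^{-1}\xi^{i}$ are exactly \eqref{hompol1}; the functional equation $\phi(e^{\la t}r+\al t)=e^{(n+1)(\la-2+2\ep)t}\phi(r)$ is precisely how the paper pins down \eqref{phiforms} (the paper additionally extracts continuity and differentiability of $\phi$ from this equation, a point you gloss over but which matters since no regularity of $\phi$ is assumed); and your Blaschke relation $|\H(F)|=|c|\,|\la F+\al|^{n+1}|\amc|^{n+2}$ is the paper's \eqref{hfrel}, used in both directions together with the mod-$4$ signature convention exactly as you describe. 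Where you genuinely diverge is in identifying the equiaffine normal. The paper first derives the general closed formula \eqref{nm1} for the equiaffine normal of an arbitrary level set (sections \ref{ufsection}--\ref{affinenormalsection}), specializes it to \eqref{nm2} in Lemma \ref{homognondegenlemma}, and then reads off that the residual term $(d\log\H(F))^{i}$ is radial precisely when \eqref{aht2} holds; you instead verify directly that the candidate $Y=f\xi$ has vanishing connection one-form (because $f$ is constant on level sets) and satisfies the volume normalization $q_{Y}\equiv 1$, which by \eqref{deth} and \eqref{zdet} characterizes the equiaffine normal. Your route is more economical for this theorem, since it never needs \eqref{nm1}, whereas the paper's detour buys the general formula it reuses in Lemmas \ref{uflemma} and \ref{spherechartlemma}; your converse is likewise a little cleaner, deducing local constancy of $\H(F)$ from $\tau_{Y}=0$ plus the normalization rather than from the explicit formula. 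The sign and multi-component bookkeeping you flag as remaining work is exactly the content of the paper's appeal to \eqref{hdl} and the mod-$4$ convention, so no essential idea is missing.
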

The elementary proof of Theorem \ref{ahtheorem} is given at the end of section \ref{characterizationsection}. If both $\U(F)$ and $\H(F)$ (equivalently $\H(F)$ and $|dF|_{g}^{2}$) are constant along the level sets of $F$ then the external reparameterization $\psi \circ F$ will have the same properties with no further restrictions on the form of $F$ besides obvious regularity assumptions. Recall that a function on a Riemannian manifold is \textit{isoparametric} if its Laplacian and the squared-norm of its differential are both constant along its level sets (see e.g. \cite{Cecil} or \cite{Thorbergsson}). The equations \eqref{mai} have a form similar to the classical isoparametric condition, though with the Laplacian replaced by the Hessian, and the squared Euclidean norm of the differential replaced by $\U(F)$. Lemma \ref{autoisolemma} shows that an $F$ solving \eqref{mai} is in fact isoparametric with respect to the metric $g$. Lemmas \ref{uflemma} and \ref{spherechartlemma} show that there hold the two equations \eqref{mai} on some open subset on which $g$ is nondegenerate if and only if the vector field $\nm$ formed by affine normals to the level sets of $F$ has the properties that its flow preserves the level sets of $F$ and its integral curves are contained in straight lines. If $F$ is moreover homogeneous, then the equations \eqref{mai} are redundant, and the level sets of $F$ must be affine spheres. 

Although no regularity assumptions are made on the functions $\phi$ and $\psi$ in \eqref{aht2} and \eqref{aht3}, such conditions follow automatically from the supposed affine homogeneity of $F$. This hypothesis is stronger than it may appear, and it would be interesting to deduce the conclusion of Theorem \ref{ahtheorem} from milder hypotheses. Note that in the convex case results with a similar flavor characterizing ellipsoids and elliptic paraboloids have been obtained by D.-S. Kim and Y.~H. Kim in \cite{Kim-Kim-i} and \cite{Kim-Kim-ii}. 

An unsatisfying aspect of Theorem \ref{ahtheorem} is the assumption in \eqref{aht1proper} and \eqref{aht1improper} that the centers of the affine spheres coincide with the dilation center of $F$. While it is reasonable to assume that level sets have a common center, that it coincides with that of $F$ should be a conclusion of the theorem.

The example $G = x_{1}^{2} - x_{2}^{2} - x_{3}^{2}$ shows the importance of the choice of domain in Theorem \ref{ahtheorem}. For this $G$, conclusion \eqref{aht2} of the theorem is true on the domains $\Om_{\pm}$, but not on all of $\rea^{n+1}$, because the hypothesis \eqref{aht1proper} is satisfied on $\Om_{\pm} = \{x\in \rea^{n+1}: \pm G(x) > 0\}$, but not on all of $\rea^{n+1}$.

Theorem \ref{ahtheorem} raises the question of the existence and uniqueness of solutions to \eqref{mai} for $\phi$ and $\psi$ as in Theorem \ref{ahtheorem}. In a setting where $\H$ need not be elliptic, or $\Om$ need not be convex, it seems even precise questions are lacking. 
On the other hand, while much remains to be said in the nonelliptic/nonconvex case, in the elliptic/convex case there is an extensive general theory for \eqref{mai}, and the convex affine spheres are constructed in full generality due principally to work of S.~Y. Cheng and S.~T. Yau in \cite{Cheng-Yau-mongeampere, Cheng-Yau-realmongeampere, Cheng-Yau-affinehyperspheresI} (see also  \cite{Sasaki} and the surveys \cite{Fox-schwarz}, \cite{Loftin-survey}, and \cite{Trudinger-Wang-survey}). A cone is \textit{proper} if it is nonempty and its closure contains no complete affine line. The group $\Aut(\Om) \subset \Aff(n+1, \rea)$ of affine automorphisms of an open subset $\Om \subset \rea^{n+1}$ comprises those $g \in \Aff(n+1, \rea)$ such that $g\Om \subset \Om$. An open $\Om$ is \textit{affinely homogeneous} if $\Aut(\Om)$ acts transitively on $\Om$. A convex affine sphere is \textit{hyperbolic} if its affine mean curvature is negative.

\begin{theorem}[S.~Y. Cheng and S.~T. Yau]\label{cytheorem}
A proper open convex cone $\Om \subset \rea^{n+1}$ is foliated in a unique way by hyperbolic affine spheres asymptotic to its boundary. More precisely, there exists a unique smooth convex function $F:\Om \to \rea$ solving $\H(F) = e^{2F}$, tending to $+\infty$ on the boundary of $\Om$, such that $g_{ij} = \hnabla_{i}dF_{j}$ is a complete Riemannian metric on $\Om$, and such that the level sets of $F$ are properly embedded hyperbolic affine spheres foliating $\Om$. 
Moreover, $(g\cdot F)(x) = F(x) + \log \det \ell(g)$ for all $g \in \Aut(\Om)$, so that $g_{ij}$ is $\Aut(\Om)$ invariant and $F$ is $-(n+1)$-logarithmically homogeneous.
\end{theorem}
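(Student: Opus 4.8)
The plan is to follow the method of S.~Y. Cheng and S.~T. Yau: realize $F$ as the solution of a Dirichlet problem for a real Monge--Ampère equation on a bounded convex base of the cone, prove existence and uniqueness there, and then deduce the homogeneity, completeness, and affine sphere properties. Since $\Omega$ is proper, its dual cone $\Omega^{\ast}$ is nonempty; choosing $\xi \in \Omega^{\ast}$ and affine coordinates in which $\xi$ is the last coordinate functional, $D := \{x^{n+1} = 1\}\cap\Omega$ is a bounded open convex set and each point of $\Omega$ has the form $t\hat{x}$ with $t > 0$, $\hat{x}\in D$. A $-(n+1)$-logarithmically homogeneous convex $F$ on $\Omega$ with center the vertex $0$ is determined by its restriction $w = F|_{D}$ through $F(t\hat{x}) = w(\hat{x}) - (n+1)\log t$, and a standard computation splitting off the radial direction converts $\H(F) = e^{2F}$ into an equation of Calabi type,
\[
\det\left(\frac{\pr^{2}u}{\pr y^{i}\pr y^{j}}\right) = (-u)^{-(n+2)}\quad\text{on }D,
\]
for a convex function $u < 0$ that is an explicit transform of $w$, the asymptotic condition $F \to +\infty$ at $\pr\Omega$ becoming the homogeneous Dirichlet condition $u \to 0$ at $\pr D$.

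First I would prove existence for the reduced problem by the continuity method, approximating $D$ from inside by smooth strictly convex domains, regularizing the right-hand side to $(-u + \delta)^{-(n+2)}$, and prescribing a small negative constant boundary value. The required a priori bounds are a $C^{0}$ estimate and control of the boundary behaviour, obtained from explicit sub- and supersolutions built from a defining function of $D$; an interior gradient bound, which follows from convexity together with the $C^{0}$ bound; and an interior second-derivative bound of Pogorelov type. Since the linearized operator is then uniformly elliptic on compact subsets of the interior, the Evans--Krylov theorem and Schauder estimates give interior $C^{\infty}$ (in fact $C^{\omega}$) regularity, and letting $\delta \to 0$ along the exhaustion produces a convex solution $u$ on $D$, hence a convex $F$ on $\Omega$ solving $\H(F) = e^{2F}$, tending to $+\infty$ on $\pr\Omega$, and $-(n+1)$-logarithmically homogeneous.

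Uniqueness among all convex solutions with $F \to +\infty$ at $\pr\Omega$ follows from the comparison principle for $\H$ (the right-hand side $e^{2F}$ being increasing in $F$): comparing two solutions on an exhaustion of $\Omega$ by bounded convex subdomains and using a lower barrier near $\pr\Omega$ to dominate their simultaneous blow-up forces them to agree. Granting uniqueness, the covariance \eqref{hgf} yields, for every $g \in \Aut(\Omega)$, that $(g\cdot F) - \log\det\ell(g)$ is again a convex solution on $\Omega$ with the same boundary behaviour and (since $g$ is an isometry of the corresponding Hessian metrics) completeness, hence equals $F$; this gives $(g\cdot F)(x) = F(x) + \log\det\ell(g)$, and adding a constant leaves the Hessian unchanged, so $g_{ij} = \hnabla_{i}dF_{j}$ is $\Aut(\Omega)$-invariant. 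Taking $g$ to be the dilation $x \mapsto e^{s}x$, for which $\det\ell(g) = e^{(n+1)s}$, recovers the $-(n+1)$-logarithmic homogeneity directly (consistent with, and alternatively imposable from the start in, the reduction above).

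The completeness of $g_{ij}$ is the analytic heart of the argument, and I expect it, together with the boundary estimates in the existence step, to be the main obstacle: one shows by an explicit barrier near $\pr\Omega$, bounding $F$ and its first derivatives below in terms of the distance to $\pr\Omega$, that every curve tending to $\pr\Omega$ has infinite $g$-length, while by homogeneity curves running to infinity along the cone also have infinite length. Finally, the Euler relation $x^{i}F_{i} = -(n+1)$ (from the logarithmic homogeneity) shows $dF$ never vanishes, so $F$ is a surjective submersion $\Omega \to \rea$ whose level sets are closed in $\Omega$, hence properly embedded hypersurfaces foliating $\Omega$; that each is a hyperbolic affine sphere centered at the vertex $0$, with the stated affine mean curvature, is then the case $\la = 0$, $\ep = 0$, $\al = -(n+1)$, $\phi(r) = e^{2r}$ (so $B = 1$) of Theorem \ref{ahtheorem} — conclusion \eqref{aht1proper} gives a proper affine sphere with center $0$, convexity of $F$ makes it locally strictly convex and therefore hyperbolic, and the mean curvature is read off from \eqref{ahmc}.
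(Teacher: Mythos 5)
The paper does not prove this theorem: it is quoted from Cheng--Yau, with the reader referred to \cite{Cheng-Yau-mongeampere, Cheng-Yau-completekahler, Cheng-Yau-realmongeampere, Cheng-Yau-affinehyperspheresI}, \cite{Sasaki}, and the surveys for details. So there is no internal proof to compare against; what you have written is a reconstruction of the standard Cheng--Yau argument, and as an outline it is essentially correct: the reduction by logarithmic homogeneity to the Calabi-type Dirichlet problem $\det(\pr^{2}u/\pr y^{i}\pr y^{j}) = (-u)^{-(n+2)}$, $u \to 0$ on $\pr D$, is the right normal form (note $D$ is $n$-dimensional, so the exponent $n+2$ is correct), the comparison-principle uniqueness works because $e^{2F}$ is increasing in $F$, and invoking Theorem \ref{ahtheorem} (with $\la = 0$, $\ep = 0$, $\al = -(n+1)$, $B = 1$) to get the affine-sphere conclusion is legitimate and is exactly the logical relationship the paper intends between the two theorems --- there is no circularity, since the proof of Theorem \ref{ahtheorem} does not use Theorem \ref{cytheorem}. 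Two caveats. First, your proposal names but does not carry out precisely the steps that constitute the real content of the Cheng--Yau papers: the boundary barrier and interior Pogorelov-type estimates for an equation whose right-hand side degenerates at $\pr D$, and above all the completeness of $g_{ij}$, whose original proof had a gap that was only later repaired (see the discussion and references in \cite{Loftin-survey}); a referee would not accept ``standard'' for these. Likewise the uniqueness step needs the maximum of the difference of two solutions to be attained, which is where the boundary barrier genuinely enters. Second, a small discrepancy worth flagging: substituting $\al = -(n+1)$, $B=1$ into \eqref{ahmc} (equivalently into \eqref{hfrel} with $\H(F) = e^{2r}$) gives affine mean curvature $-(n+1)^{-(n+1)/(n+2)}e^{2r/(n+2)}$, whereas the parenthetical in the statement of Theorem \ref{cytheorem} has exponent $2r/(n+1)$; the latter appears to be a typo in the paper rather than an error in your computation.
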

By Theorem \ref{cytheorem}, a geometric picture that holds for a homogeneous convex cone and its characteristic function holds for a general proper convex cone provided that the solution of $\H(F) = e^{2F}$ is used in place of the characteristic function. As will be explained next, together Theorem \ref{matheorem} and Theorem \ref{pvtheorem}, showing that interesting explicit polynomial solutions of \eqref{mai} arise as the relative invariants of certain prehomogeneous vector spaces, provide a supply of explicit solutions of \eqref{mai} in both the convex and nonconvex cases. In particular, the open orbits of real forms of certain prehomogeneous vector spaces are foliated by proper affine spheres arising as the level sets of a solution of a Monge-Amp\`ere equation, and this is the homogeneous geometric picture on which should be modeled an analogue of Theorem \ref{cytheorem}  applying to some class of nonconvex cones. 

Let $\pol^{k}(\rea^{n+1})$ denote the space of degree $k$ homogeneous polynomials on $\rea^{n+1}$. Let $k > 1$ be an integer that divides $(n+1)(k-2)$, or, what is the same, divides $2(n+1)$. For a sufficiently smooth function $F$ on $\rea^{n+1}$ consider the equation 
\begin{align}\label{ma}
&\H(F) = \kc F^{m},& &m = (n+1)(k-2)/k,
\end{align}
for a nonzero constant $\kc$. The exponent in \eqref{ma} is explained by the observation that since $\H(P) \in \pol^{(n+1)(k-2)}(\rea^{n+1})$, for $P$ to solve $\H(P) = \kc P^{m}$ with $\kc \neq 0$, it must be that $km = (n+1)(k-2)$. 

\begin{theorem}\label{matheorem}
If $P \in \pol^{k}(\rea^{n+1})$ solves \eqref{ma} for a nonzero constant $\kc$, then, for nonzero $r$, each connected component of the level set $\lc_{r} = \{x \in \rea^{n+1}: P(x) = r\}$ is a proper affine sphere. Conversely, if for each nonzero $r$ each level $\lc_{r}$ of a homogeneous polynomial $P \in \pol^{k}(\rea^{n+1})$ is an affine sphere, then $P$ solves \eqref{ma}. 
\end{theorem}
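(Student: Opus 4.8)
The plan is to deduce Theorem \ref{matheorem} from Theorem \ref{ahtheorem} by checking that a homogeneous polynomial $P \in \pol^{k}(\rea^{n+1})$ falls into the centrally homogeneous, non-logarithmic case of that theorem, namely $\ep = 0$, $\al = 0$, $\la = k$. First I would note that $P$ is positively homogeneous of degree $k$ with center the origin, so $P \in \amg^{k, 0}_{0}$ in the notation of the excerpt, and the excluded value $\la = 1 - \ep = 1$ is avoided precisely because $k > 1$; moreover $\la \neq 0$, so the hypothesis $\la r + \al \neq 0$ for $r$ in the relevant interval just says $r \neq 0$. Thus on each of the open sets $\Om_{+} = P^{-1}((0,\infty))$ and $\Om_{-} = P^{-1}((-\infty,0))$ (with $I = (0,\infty)$ or $(-\infty, 0)$ respectively), Theorem \ref{ahtheorem} applies with $n+1$ as given. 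The forward direction is then immediate: \eqref{ma} says $\H(P) = \kc P^{m}$ with $m = (n+1)(k-2)/k$, which is exactly condition \eqref{aht2} of Theorem \ref{ahtheorem} with $\phi(r) = \kc r^{m}$, nonvanishing on each $I$; indeed the exponent $(n+1)(\la - 2)/\la$ appearing in \eqref{phiforms} equals $m$ when $\la = k$. Hence condition \eqref{aht1proper} holds: every connected component of $\lc_{r}$, for $r \neq 0$, is a proper affine sphere. One should remark that $\kc r^{m}$ may change sign as $r$ crosses $0$ (e.g. if $m$ is odd), which is exactly why the conclusion is stated componentwise and the constants $\kc$ on $\Om_{+}$ and $\Om_{-}$ need only agree up to sign, consistent with the affine covariance \eqref{hgf}; this is the phenomenon already illustrated in Example \ref{lorentzexample}.

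For the converse, suppose $P \in \pol^{k}(\rea^{n+1})$ is such that for every nonzero $r$ each level set $\lc_{r}$ is an affine sphere. Applying Theorem \ref{ahtheorem} on $\Om_{+}$ and on $\Om_{-}$ — where the hypothesis \eqref{aht1proper} is satisfied, taking $v$ to be the origin, since the centers of the affine spheres foliating a centrally homogeneous family must all be the dilation center — gives condition \eqref{aht2}: there is a nonvanishing $\phi_{\pm}: I_{\pm} \to \rea$ with $\H(P) = \phi_{\pm}(P)$ on $\Om_{\pm}$. By the first line of \eqref{phiforms} with $\la = k$, necessarily $\phi_{\pm}(r) = B_{\pm}|r|^{(n+1)(k-2)/k}$ for nonzero constants $B_{\pm}$. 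On the other hand, $\H(P)$ is itself a homogeneous polynomial of degree $(n+1)(k-2)$, hence a polynomial function of $P$ on each half-space forces the exponent $(n+1)(k-2)/k$ to be the integer $m$ (this is Lemma \ref{exponentlemma}) and forces $\H(P) = \kc_{\pm} P^{m}$ with $\kc_{\pm}$ matching $B_{\pm}$ up to the sign $\sign(r)^{m}$; since $\H(P)$ is a single polynomial, $\kc_{+} = \kc_{-} =: \kc$, and $\kc \neq 0$ because $\phi_{\pm}$ is nonvanishing. Thus $P$ solves \eqref{ma}.

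The main obstacle I anticipate is a bookkeeping one rather than a conceptual one: carefully reconciling the componentwise, sign-sensitive output of Theorem \ref{ahtheorem} on the two disjoint domains $\Om_{+}$ and $\Om_{-}$ with the requirement that $\H(P)$ be one globally-defined polynomial. Concretely, one must check that the constant $B$ (equivalently $\kc$) coming from the $\la = k$ case of \eqref{phiforms} is the same on $\Om_{+}$ and $\Om_{-}$; this follows because $\H(P)$ is continuous (indeed polynomial) across the zero locus of $P$, but it deserves an explicit sentence. A secondary point is the degenerate/edge behaviour on the zero level set $\lc_{0} = \{P = 0\}$, where $\hess P$ may degenerate; since Theorem \ref{matheorem} asserts nothing about $\lc_{0}$, this causes no trouble, but one should be explicit that the hypotheses of Theorem \ref{ahtheorem} are only invoked on $\Om_{\pm} = \rea^{n+1} \setminus \lc_{0}$ and that $I$ is taken to avoid $0$. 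Finally, one invokes Lemma \ref{exponentlemma} to record the integrality constraint $k \mid 2(n+1)$, which is implicit once $\H(P) = \kc P^{m}$ holds with $m, k$ integers.
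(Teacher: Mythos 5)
Your argument is correct in substance and pivots on exactly the same tool as the paper's proof: both directions are read off from Theorem \ref{ahtheorem} applied with $\ep = 0$, $\al = 0$, $\la = k$, and the forward direction is handled identically. The one place where the paper's converse differs tactically is worth absorbing. Instead of applying Theorem \ref{ahtheorem} on all of $\Om_{\pm} = P^{-1}(\pm(0,\infty))$, the paper fixes a single connected component $\Sigma$ of one nonzero level set, forms the open cone $\Om = \cup_{t \in \rea}\Sigma_{t}$ swept out by its dilates, and applies Theorem \ref{ahtheorem} there. The payoff is that every level set of $P$ inside this cone is connected, so the coherence conditions (equal affine mean curvatures and equal signatures modulo $4$ across components) built into hypothesis \eqref{aht1proper} are vacuous, and the converse goes through from the purely componentwise hypothesis; your version instead leans on the full disconnected-affine-sphere hypothesis on each $\lc_{r}$, which the theorem statement does supply, so your application is legitimate but less economical. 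The cone device also disposes of your ``reconciling $\kc_{+}$ and $\kc_{-}$'' step in one stroke: once $\H(P) = \kc P^{m}$ holds on a single nonempty open set, the homogeneous polynomial $\H(P) - \kc P^{m}$ vanishes there and hence identically. On that point, note that your appeal to continuity of $\H(P)$ across $\{P = 0\}$ is not the right mechanism --- when $m \geq 1$ both $\H(P)$ and $\kc_{\pm}P^{m}$ vanish on the zero locus, so continuity gives no information; the correct statement is the one you parenthesize, that a polynomial vanishing on a nonempty open set vanishes identically. Finally, your claim that the centers of the affine spheres ``must be the dilation center'' is precisely the point the paper flags after Theorem \ref{ahtheorem} as something that \emph{should} be a conclusion but is in fact an assumption; the paper's own proof of the converse simply builds ``proper, centered on the origin'' into the hypothesis, and you should state this as an assumption rather than assert it as a deduction.
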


\begin{proof}
That the nonzero level sets of a polynomial solving \eqref{ma} are proper affine spheres is immediate from Theorem \ref{ahtheorem}. Suppose that $P \in \pol^{k}(\rea^{n+1})$ and for each nonzero $r$ each connected component of $\lc_{r}(P)$ is a proper affine sphere centered on the origin. By Lemma \ref{homognondegenlemma}, $\H(P)$, $\U(P)$, and $P^{i}$ do not vanish on $\lc_{r}(P)$. Let $\Sigma$ be a connected component of $\lc_{r}(P)$. Let $\Sigma_{t}$ be the image of $\Sigma$ under dilation by a factor of $e^{t}$ around the origin. It is a connected component of $\lc_{e^{kt}r}(P)$. By the tubular neighborhood theorem the union $\Om = \cup_{t \in \rea}\Sigma_{t}$ is an open cone on which the hypotheses of \eqref{aht2} of Theorem \ref{ahtheorem} are satisfied for the polynomial $P$, and so there is a constant $B \neq 0$ such that, on $\Om$, $\H(P) = B|P|^{m}$, where $m = (n+1)(k-2)/k$. Since $\H(P)$ is a polynomial, $m$ must be an integer, so that there is a nonzero constant $c$ (equal to $\pm B$) such that $\H(P) = cP^{m}$ on $\Om$. Since the homogeneous polynomial $\H(P) - \kc P^{m}$ vanishes on the open set $\Om$, it is identically zero.
\end{proof}

The conditions that $P$ be homogeneous, that $\kc$ be nonzero, and that the affine spheres be proper are necessary in Theorem \ref{matheorem}. The polynomial $P = x_{n+1} - \sum_{i = 1}^{n}x_{i}^{2}$ solves $\H(P) = 0$, and its nonzero level sets are paraboloids, which, although affine spheres, are improper. More generally:

\begin{lemma}\label{nonhomoglemma}
Let $P \in \pol^{2}(\rea^{n})$ be a nondegenerate quadratic form and let $\kc = \H(P) \neq 0$. If $l > 1$ is an integer dividing $n+2$ then $Q = (x_{n+1} - P(x_{1}, \dots, x_{n}))^{l}$ solves $\H(Q) = (-1)^{n}l^{n+1}(l-1)\kc Q^{n+1 - (n+2)/l}$. 
However, $Q$ is not affinely equivalent to a homogeneous polynomial.
\end{lemma}

\begin{proof}
That $Q$ solves the equation $\H(Q) = \ka Q^{n+1 - (n+2)/l}$ with $\ka =  (-1)^{n}l^{n+1}(l-1)\kc$ is a straightforward calculation. Were there $g\in \Aff(n+1, \rea)$ such that $g\cdot Q$ were homogeneous, then since, by \eqref{hgf}, $g\cdot Q$ would solve $\H(g\cdot Q) = \ka (\det \ell(g))^{-2}(g\cdot Q)^{n+1 - (n+2)/l}$, by Theorem \ref{matheorem}, the nonzero level sets of $g \cdot Q$ would be proper affine spheres. However, the nonzero level sets of $Q$ are contained in graphs of functions of the form $P + c$, with $c$ a constant, and these are improper affine spheres.
\end{proof}

By \eqref{hgf}, if $F$ solves \eqref{ma} for the constant $\kc$ then $g \cdot F$ solves \eqref{mai} for the constant $\kc \det^{-2}\ell(g)$, so while the value of $\kc$ has no affinely invariant significance, its sign has. In the simplest case of \eqref{ma}, when $m = 0$, an entire locally uniformly convex solution of \eqref{ma} with $\kc > 0$ is a quadratic polynomial (\cite{Jorgens, Calabi-improper, Pogorelov}). According to Remark $(ii)$ following Theorem $4.4$ of \cite{Trudinger-Wang-survey}, the local uniform convexity is not necessary for the conclusion. On the other hand, if $\kc < 0$, any function of the form $F(x_{1}, \dots, x_{n+1}) = (-\kc)^{1/2}x_{1}x_{n+1} + f(x_{1}) + \tfrac{1}{2}\sum_{i = 2}^{n}x_{i}^{2}$ with $f \in C^{2}(\rea)$ solves \eqref{ma} with $m = 0$ on all of $\rea^{n+1}$. For $m > 0$ the situation is more complicated in that there is an abundance of homogeneous polynomial solutions of \eqref{ma} having degree at least $3$. The homogeneous polynomials of two variables solving an equation of the form \eqref{ma} with $\kc \neq 0$ are distinguished up to affine equivalence by degree, signature of the Hessian, and the sign of $\kc$; all are products of powers of linear forms and quadratic polynomials. Similarly, a ternary cubic polynomial solving $\H(P) = \kc P$ for $\kc \neq 0$ is affinely equivalent to a product of linearly independent linear forms if $\kc > 0$, and to a product of a linear form and degenerate quadratic form if $\kc <0$. Among other things, this implies that a cubic ternary polynomial solution of \eqref{ma} is necessarily reducible and decomposable as a product of lower dimensional solutions. In general, products of appropriate powers of polynomial solutions of \eqref{ma} for some $m$ and $\kc$ are again solutions, for some other $m$ and $\kc$. In particular, a product of powers of linear forms and quadratic polynomials solves an equation of the form \eqref{ma}. While this shows that equations of the form \eqref{ma} admit solutions of arbitrarily many variables and arbitrary degrees, it also suggests that polynomial solutions of \eqref{ma} are most interesting when they are irreducible as polynomials. As is explained precisely in section \ref{prehomogeneoussection}, a large class of irreducible polynomials solving \eqref{ma} is given by the relative invariants of irreducible regular real prehomogeneous vector spaces, (note that \textit{irreducible} is used here with two different meanings).  

\begin{theorem}\label{pvtheorem}
\noindent
\begin{enumerate}
\item\label{pv1} Given a real form of a regular irreducible complex prehomogeneous vector space $\pv$, the restriction $P$ to the real points $\ste_{\rea}$ of $\ste$ of an appropriate scaling of the relative invariant of $\pv$ is an irreducible homogeneous real polynomial solving \eqref{ma}. A connected component of a nonzero level set of such a $P$ is a homogeneous affine sphere.
\item\label{pv2} If the complex polynomial $P$ is the relative invariant of a regular irreducible complex prehomogeneous vector space $\pv$ then the real polynomial $|P|^{2} = P\bar{P}$ solves \eqref{ma}. A connected component of a nonzero level set of $|P|^{2}$ is a homogeneous affine sphere.
\end{enumerate}
\end{theorem}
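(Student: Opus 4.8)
The plan is, for both parts, to reduce to the assertion that the polynomial at hand is a homogeneous solution of \eqref{ma} with nonzero constant $\kc$: once that is in place, Theorem \ref{matheorem} immediately exhibits the connected components of its nonzero level sets as proper affine spheres, and a short transitivity argument makes them homogeneous.

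For \eqref{pv1} I would take $P$ to be the relative invariant of $\pv$, scaled (as recalled in section \ref{prehomogeneoussection}) so that its restriction to the real points $\ster$ of a chosen real form has real coefficients; write $k = \deg P$, $N = n+1 = \dim\ster$, and $\chi$ for the character, $P(\rho(g)x) = \chi(g)P(x)$. Because the Hessian determinant is equivariant up to a factor $\det{}^{2}\ell(g)$ (the linear case of \eqref{hgf}) and $\H(\la P) = \la^{N}\H(P)$, the polynomial $\H(P)$ is again a relative invariant of $\pv$, so, $\pv$ being irreducible, $\H(P) = \kc P^{m}$ for a constant $\kc$ and an integer $m \geq 0$; comparing degrees forces $mk = N(k-2)$, i.e. $m = (n+1)(k-2)/k$ as in \eqref{ma}, and in particular $k \geq 2$. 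That $\pv$ is \emph{regular} says exactly that this Hessian determinant does not vanish identically (equivalently, that the Hessian of $\log P$ is generically nondegenerate), so $\kc \neq 0$; restricting to $\ster$, the real polynomial $P$ then solves \eqref{ma} with real $\kc \neq 0$, and it is irreducible over $\rea$ because over $\com$ it cuts out the irreducible singular set of $\pv$. Theorem \ref{matheorem} now gives that every connected component of every level set $\lc_{r} = \{x \in \ster : P(x) = r\}$, $r \neq 0$, is a proper affine sphere. For homogeneity: $\ster \setminus \lc_{0}$ is a union of open $\Grea$-orbits, hence of its own connected components, so a connected component $\lc$ of such an $\lc_{r}$ lies in a single $\Grea$-orbit; if $x, y \in \lc$ and $\rho(g)x = y$ then $\chi(g) = 1$ since $P(x) = P(y) = r \neq 0$, whence $\{g \in \Grea : \chi(g) = 1\}$ acts transitively on $\lc$ and $\lc$ is a homogeneous affine sphere.

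For \eqref{pv2} I would regard $\ste = \com^{N}$ as $\rea^{2N}$ and keep the notation above for the now complex-valued $P$. Since $P$ is holomorphic and homogeneous of degree $k$, $|P|^{2} = P\bar P$ is a genuine homogeneous polynomial of degree $2k$ on $\rea^{2N}$, so it remains to compute $\H(|P|^{2})$. In holomorphic coordinates the real Hessian determinant of a function equals $(-4)^{N}$ times the determinant of its matrix of second derivatives in $z_{i}, \bar z_{j}$; for $u = P\bar P$, writing $P_{i} = \partial P/\partial z_{i}$, $P_{ij} = \partial^{2}P/\partial z_{i}\partial z_{j}$, and $M = (P_{ij})$, that matrix is the $2N \times 2N$ block matrix with blocks $\bar P M$, $(P_{i}\overline{P_{j}})$, $(\overline{P_{i}}P_{j})$, $P\overline{M}$. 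Taking the Schur complement of $\bar P M$ and using the homogeneity relations $\sum_{j}P_{ij}z_{j} = (k-1)P_{i}$ and $\sum_{i}P_{i}z_{i} = kP$ — so that $\sum_{i,j}P_{i}(M^{-1})_{ij}P_{j} = \tfrac{k}{k-1}P$, together with the conjugate identities — a routine computation collapses the determinant to $-\tfrac{2k-1}{(k-1)^{2}}|P|^{2N}|\det M|^{2}$. By the identity from \eqref{pv1} applied over $\com$, $\det M = \kc_{1}P^{m}$ with $\kc_{1} \neq 0$ and $m = N(k-2)/k$, so $|\det M|^{2} = |\kc_{1}|^{2}|P|^{2m}$ and
\begin{equation*}
\H(|P|^{2}) = -(-4)^{N}\tfrac{2k-1}{(k-1)^{2}}|\kc_{1}|^{2}\,(|P|^{2})^{N+m}, \qquad N + m = \tfrac{2N(k-1)}{k} = \tfrac{(2N)(2k-2)}{2k},
\end{equation*}
which is precisely \eqref{ma} for the degree-$2k$ polynomial $|P|^{2}$ on $\rea^{2N}$, with nonzero constant $\kc = -(-4)^{N}(2k-1)(k-1)^{-2}|\kc_{1}|^{2}$ (here $k \geq 2$). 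Theorem \ref{matheorem} then gives that each connected component of each nonzero level set of $|P|^{2}$ is a proper affine sphere, and the transitivity argument of \eqref{pv1}, now with $\{g \in G : |\chi(g)| = 1\}$ acting $\rea$-linearly on $\rea^{2N}$, shows each such component is homogeneous.

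Two steps should need the most care. The first is the structural identity $\H(P) = \kc P^{m}$, $\kc \neq 0$: this is exactly where regularity of $\pv$ is used (to get $\kc \neq 0$, via nonvanishing of the Hessian determinant of the relative invariant) and where irreducibility is used (to force the semiinvariant $\H(P)$ to be a multiple of a power of $P$), and I would isolate it as a lemma citing the material recalled in section \ref{prehomogeneoussection}. The second is the Hessian determinant computation in \eqref{pv2}; the algebra there is routine given the homogeneity relations, and the only genuine point to check is that the resulting constant is nonzero — which is immediate, since $k \geq 2$ makes $(k-1)^{2}(2k-1) \neq 0$ and regularity makes $\kc_{1} \neq 0$.
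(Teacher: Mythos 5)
Your argument is correct and follows essentially the paper's own route: both parts are reduced to the assertion that the polynomial solves \eqref{ma} with nonzero constant (for $|P|^{2}$ this is exactly Lemma \ref{complexsimplelemma}, which you rederive via a Schur complement and the Euler relations rather than the paper's choice of a basis adapted to $\ker dP$), after which Theorem \ref{matheorem} and the relative invariance of $P$ under the character kernel give the affine sphere and homogeneity claims. The paper packages the first part as the slightly more general Theorem \ref{pv2theorem} and cites Sato--Shintani for the reality of the restriction and the identity $\H^{\rea}(P) = \H^{\com}(P)$ on $\ste_{\rea}$, but the substance coincides with yours.
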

\noindent
An affine sphere is \textit{homogeneous} if it is contained in an orbit of some group of affine transformations.
The conclusion of Theorem \ref{pvtheorem} is valid under a hypothesis milder than the irreducibility of $\pv$, namely that the singular set of $\pv$ be an irreducible hypersurface. The result is proved in this generality as Theorem \ref{pv2theorem}. 

In conjunction with the Sato-Kimura classification of irreducible regular complex prehomogeneous vector spaces, Theorems \ref{matheorem} and \ref{pvtheorem} give many explicit examples of affine spheres and solutions of \eqref{ma}. Among the simplest examples are the nonzero level sets of the discriminant,
\begin{equation}\label{pvexample}
P = x_{2}^{2}x_{3}^{2} + 18x_{1}x_{2}x_{3}x_{4} - 4x_{1}x_{3}^{3} - 4x_{2}^{3}x_{4} - 27x_{1}^{2}x_{4}^{2},
\end{equation}
of the binary cubic form $f(u, v) = x_{1}u^{3} + x_{2}u^{2}v + x_{3}uv^{2} + x_{4}v^{3}$. 
This solves $\H(P) = 2^{4}3^{5}P^{2}$. The action of $GL(2, \rea)$ on $S^{3}(\rea^{2})$ is irreducible and prehomogeneous, and has $P$ as relative invariant, so by Theorem \ref{pvtheorem}, the nonzero level sets of $P$ are proper affine spheres homogeneous for the induced action of $SL(2, \rea)$. In this case the Hessian metric $g$ has split signature, and so the equiaffine metrics of the resulting three-dimensional affine spheres have indefinite signature. A more impressive example comes from the prehomogeneous action of the real form $GL(1, \rea) \times E_{6(6)}$ on the $3 \times 3$ Hermitian matrices over the split octonions. The relative invariant is the cubic form stabilized by $E_{6(6)}$ and can be identified with $P(X, u, v) = \pfaff(X) + u^{t}Xv$ where $X$ is a $6\times 6$ skew-symmetric matrix and $u, v \in \rea^{6}$. Then $P \in \pol^{3}(\rea^{27})$ solves $\H(P) = 2P^{9}$. For illustration, an incomplete list of examples obtained in this way is given in section \ref{prehomogeneoussection}; the preceding examples are entries \ref{cubicsplit} and \ref{e6ex2}. A perusal of the examples shows that most can be realized as level sets of determinants, Pfaffians, or discriminants (in the sense of \cite{Gelfand-Kapranov-Zelevinsky}). For example, the Cayley hyperdeterminant (\cite{Cayley-hyperdeterminants}) of a covariant $3$-tensor on a two-dimensional complex vector space $\ste$ arises as the relative invariant for the irreducible action of $GL(2, \com)\times GL(2,\com)\times GL(2,\com)$ on $\tensor^{3}\ste^{\ast}$. This action has seven orbits, so Theorem \ref{pvtheorem} applies. If $X \in \tensor^{3}\ste^{\ast}$ is written as the polynomial
\begin{equation}
X = x_{1}u^{3} + x_{2}u^{2}v + x_{3}uvu + x_{4}vu^{2} + x_{5}uv^{2} + x_{6}vuv + x_{7}v^{2}u + x_{8}v^{3},
\end{equation}
in noncommuting variables $u$ and $v$, then its hyperdeterminant is
\begin{align}\label{chdet}
\begin{split}
P(X) &= \hdet X 
 = (x_{1}^{2}x_{8}^{2} + x_{2}^{2}x_{7}^{2} + x_{3}^{2}x_{6}^{2} + x_{4}^{2}x_{5}^{2})  + 4(x_{1}x_{5}x_{6}x_{7} + x_{2}x_{3}x_{4}x_{8})\\&- 2(x_{1}x_{2}x_{7}x_{8} + x_{1}x_{3}x_{6}x_{8} + x_{1}x_{4}x_{5}x_{8} + x_{2}x_{3}x_{6}x_{7} + x_{2}x_{4}x_{5}x_{7} + x_{3}x_{4}x_{5}x_{6}).
\end{split}
\end{align} 
The polynomial \eqref{chdet} solves $\H(P) = 2^{8}3P^{4}$, and a nonzero level set of \eqref{chdet} in $\tensor^{3}\rea^{2}$ is a proper affine sphere, homogeneous for the induced action of $SL(2, \rea)\times SL(2, \rea) \times SL(2, \rea)$, and with induced metric of maximally mixed signature. More generally, Corollary \ref{hyperdeterminantcorollary} shows that if the action $\rho$ of $G = GL(k_{1}+1, \com)\times \dots \times GL(k_{r}+1, \com)$ on the dual $\stw$ of the outer tensor product of the standard representations of its factors is prehomogeneous, then the hyperdeterminant of format $(k_{1} + 1, \dots, k_{r}+1)$ is the fundamental relative invariant for this action, and hence its nonzero levels over the real field are proper affine spheres. In general the prehomogeneous vector space $(G, \rho, \stw)$ is not reduced, and Lemma \ref{reducedlemma} shows that in this case the hyperdeterminant of format $(k_{1} + 1, \dots, k_{r}+1)$ can be obtained via castling from a standard determinant of a square matrix, the hyperdeterminant of format $(2, 2, 2)$, or the hyperdeterminant of format $(3, 3, 2)$. 

By Theorem \ref{ahtheorem}, the level sets of a translationally homogeneous function $F$ satisfying $\H(F) = \kc F^{n+1}$ are improper affine spheres. The equation $\H(F) = \kc F^{n+1}$ results from \eqref{ma} when the homogeneity degree $k$ tends to $\infty$, so in some formal sense the analogue for this equation of degree $k$ homogeneous polynomial solutions of \eqref{ma} should be functions that somehow can be regarded as polynomials homogeneous of infinite degree. It turns out that this makes sense if one regards a translationally homogeneous exponential of a weighted homogeneous polynomial as having infinite homogeneity degree. Precisely, there can be constructed translationally homogeneous solutions $F$ of $\H(F) = \kc F^{n+1}$ having the form $F = e^{P}$ where the weighted homogeneous polynomial $P$ is the characteristic polynomial of a left symmetric algebra for which the right trace form is nondegenerate, the left multiplication operators are triangular in some basis, and the derived Lie subalgebra has codimension one. The level sets of $P$ are homogeneous improper affine spheres. The simply-connected solvable Lie group corresponding to the underlying Lie algebra acts with an open orbit equal to the complement of the zero set of its relative invariant $P$, and so these examples fit into a common framework with the homogeneous proper affine spheres described above. However, since recounting the background on left symmetric algebras necessary to prove these claims takes considerable space, these results will be reported elsewhere.

\section{Auto-isoparametric conditions and the proof of Theorem \ref{ahtheorem}}\label{characterizationsection}
Lemma \ref{affnormallemma} gives an explicit formula for the affine normal of a level set that coincides with that of Theorem $1$ of \cite{Hao-Shima}, modulo differences in notation. 

\begin{lemma}[J. Hao and H. Shima, \cite{Hao-Shima}]\label{affnormallemma}
Let $F$ be a $\cinf$ function defined on some open subset of $\rea^{n+1}$ and let $g_{ij} = \hnabla_{i}dF_{j}$. Let $\Om$ be a (Euclidean) connected component with nonempty interior of the region on which $F$, $dF$, $\H(F)$, and $\U(F)$ are nonvanishing. For $r \in \rea$ let $\lc_{r}(F, \Om) = \{x \in \Om: F(x) = r\}$. By assumption $\H(F)$ does not change sign on $\lc_{r}(F, \Om)$ and $F^{i} = g^{ij}F_{j}$ is nonzero on $\Om$, so is transverse to $\lc_{r}(F, \Om)$, which is therefore co-orientable. Let $\muf = (n+2)^{-1}d\log\U(F)$ and $\A(F) = 1 - F^{p}\muf_{p}$. The equiaffine normal of $\lc_{r}(F, \Om)$ consistent with the co-orientation determined by $-\sign(\H(F)\U(F))F^{i} = -\sign(|dF|^{2}_{g})F^{i}$ has the explicit expression 
\begin{align}\label{affnorm}
\begin{split}
\nm^{i} &= -|\U(F)|^{1/(n+2)}\left(\U(F)^{-1}\H(F)\A(F)F^{i} + \muf^{i}\right)  = \tilde{g}^{ip}\left(\A(F)|\cn|_{\tilde{g}}^{-2}\cn_{p} - \muf_{p}\right),
\end{split}
\end{align} 
where the  the \textit{equiaffine conormal} one-form $\cn_{i} = -|\U(F)|^{-1/(n+2)}F_{i}$ annihilates the tangent space to $\lc_{r}(F, \Om)$ and satisfies $\cn_{i}\nm^{i} = 1$, the equiaffine metric of $\lc_{r}(F, \Om)$ equals the restriction of 
\begin{align}\label{hnablacn}
\begin{split}
\tilde{g}_{ij}  = |\U(F)|^{-1/(n+2)}g_{ij} = -\hnabla_{i}\cn_{j} - \muf_{i}\cn_{j},
\end{split}
\end{align}
and $\tilde{g}^{ij}$ is the symmetric bivector inverse to $\tilde{g}_{ij}$. Moreover, $d\cn = \cn \wedge \muf$.
\end{lemma}
The co-orientation convention in Lemma \ref{affnormallemma} is such that for a locally uniformly convex hypersurface the equiaffine normal points to the convex (interior) side of the surface.
\begin{proof}
Let $h$ and $\tau$ be respectively the second fundamental form and connection one-form on $\lc_{r}(F, \Om)$ determined by the transversal $\gn^{i} = \H(F)F^{i}$, and let $\nm = a(N + Z)$ be the equiaffine normal. For $X$ tangent to $\lc_{r}(F, \Om)$, $0 = dF(X) = (\hnabla_{X}dF)(N) = d\U(F)(X) - \U(F)\tau(X)$, so $\tau_{I} = \U(F)^{-1}\U(F)_{I} = (n+2)\muf_{I}$. Note that this equality refers only to the tangential directions. For $X$ and $Y$ tangent to $\lc_{r}(F, \Om)$, $g(X, Y) = (\hnabla_{X} dF)(Y) = -\U(F)h(X, Y)$, so that $h_{ij} =  -\U(F)^{-1}\left(g_{ij} - \H(F)\U(F)^{-1}F_{i}F_{j}\right)$ satisfies $F^{p}h_{ip} = 0$ and restricts on $\lc_{r}(F, \Om)$ to the second fundamental determined by $N^{i}$. Let $X_{1}, \dots, X_{n}$ be $h$-orthogonal vector fields tangent to $\lc_{r}(F, \Om)$ such that $|h(X_{I}, X_{I})| = 1$, so that $|\vol_{h}(X_{1}, \dots, X_{n})| = 1$. Since $|g(X_{I}, X_{I})| = |\U(F)|$ and $|g(N, N)| = |\H(F)\U(F)|$, by the definition of the volume density $\vol_{g}$ of the metric $g_{ij}$,
\begin{align}\label{quf}
\begin{split}
|\Psi(\gn, X_{1}, \dots X_{n})| & = |\H(F)|^{-1/2}|\vol_{g}(N, X_{1}, \dots, X_{n})| = |\U(F)|^{(n+1)/2},
\end{split}
\end{align}
so that $q = |\vol_{h}/\imt(N)\Psi|  =  |\U(F)|^{-(n+1)/2}$ and $2q^{-1}dq_{I} = -(n+1)\tau_{I}$. In \eqref{zdet} this yields $Z^{P}h_{PI} = -(n+2)^{-1}\tau_{I} = -\muf_{I}$ and $|a|  = |\U(F)|^{-(n+1)/(n+2)}$. Since $Z$ is tangent to $\lc_{r}(F, \Om)$, for $X$ tangent to $\lc_{r}(F, \Om)$ there holds $g(Z, X) = -\U(F)h(Z, X) = (n+2)^{-1}X^{I}\U(F)_{I}$. Consequently the vector field $Z^{i}  = \U(F)(g^{ij} - \U(F)^{-1}\H(F)F^{i}F^{j})\muf_{j}$ equals $Z^{I}$ along each level set $\lc_{r}(F, \Om)$, and it follows that the equiaffine normal on $\lc_{r}(F, \Om)$ consistent with the co-orientation determined by $-\sign(\H(F)\U(F))F^{i}$ is given by the first equality of \eqref{affnorm}. The claims involving $\tilde{g}_{ij}$ follow straightforwardly. Skew-symmetrizing \eqref{hnablacn} shows $d\cn = \cn \wedge \muf$.
\end{proof}

\begin{lemma}\label{fmclemma}
With the setup as in Lemma \ref{affnormallemma}, the mean curvature of $\lc_{r}(F, \Om)$ with respect to $\hnabla$ and the transversal $F^{i}$ equals $-n^{-1}(n + 2)\A(F)$.
\end{lemma}
\begin{proof}
Define $\tilde{\tau}_{i} = |dF|^{-2}_{g}\left(d_{i}|dF|^{2}_{g} - F_{i}\right)$ and
\begin{align}\label{tauextended}
\begin{split}
\tilde{S}_{i}\,^{j} & = -\hnabla_{i}F^{j} + \tilde{\tau}_{i}F^{j} = -\delta_{i}\,^{j} + F_{ip}\,^{j}F^{p} + |dF|^{-2}_{g}\left(d_{i}|dF|^{2}_{g} -  F_{i}\right)F^{j}\\
&=  -\delta_{i}\,^{j} + F_{ip}\,^{j}F^{p} - \left(d_{i}\log\H(F) - (n+2)\muf_{i} + |dF|_{g}^{-2}F_{i} \right)F^{j}.
\end{split}
\end{align}
From $d_{i}|dF|_{g}^{2} = 2F_{i} - F_{ipq}F^{p}F^{q}$, it follows that $F_{j}\tilde{S}_{i}\,^{j} = 0$, so it makes sense to speak of the restriction $\tilde{S}_{I}\,^{J}$ of $\tilde{S}_{i}\,^{j}$ to the tangent bundle of $\lc_{r}(F, \Om)$, and  $\tilde{S}_{I}\,^{I} = \tilde{S}_{i}\,^{i}$. It follows from \eqref{tauextended} that $\tilde{S}_{I}\,^{J}$ is the shape operator associated with the transversal $F^{i}$. Hence the mean curvature with respect to $F^{i}$ is $\tilde{S}_{I}\,^{I} = \tilde{S}_{p}\,^{p} = -(n+2) + F^{p}d_{p}\log\H(F) + F^{p}d_{p}\log|dF|^{2}_{g} = -(n+2)\A(F)$.
\end{proof}

By \eqref{hpsif}, the condition that $\U(F)$ be a function of $F$ is preserved when $F$ is replaced by an external reparameterization $\psi \circ F$ by a $C^{1}$ function $\psi$. In short, this is really a condition about the geometry of the level sets of $F$, rather than about $F$ per se. Lemma \ref{uflemma} shows that the local constancy of $\U(F)$ on the levels of $F$ is equivalent to the preservation of the levels of $F$ by the flow generated by their affine normals.

\begin{lemma}\label{uflemma}
Suppose given connected open subsets $\Om \subset \rea^{n+1}$ and $I \subset \rea$ and a function $F \in \cinf(\Om)$ such that neither $\H(F)$ nor $\U(F)$ vanishes on $\Om_{I} = F^{-1}(I) \cap \Om$. Then the following are equivalent: 
\begin{enumerate}
\item\label{uf1} $\U(F)$ is locally constant on $\lc_{r}(F, \Om_{I})$ for each $r \in I$. 
\item\label{ufnormal} The equiaffine normal $\nm^{i}$ is a multiple of $F^{i}$.
\item\label{ufcn} The equiaffine conormal one-form $\cn$ is closed, $d\cn = 0$.
\item\label{ufliecn} There holds $\lie_{\nm}\cn = 0$.
\item\label{uflie} The flow generated by the equiaffine normal preserves the level sets of $F$. 
\end{enumerate}
If there hold \eqref{uf1}-\eqref{uflie}, then the affine mean curvature of $\lc_{r}(F, \Om_{I})$ is the restriction of
\begin{align}\label{ufmc}
n^{-1}(n+2)\sign(\U(F))\H(F)|\U(F)|^{-(n+1)/(n+2)}\A(F),
\end{align}
where $\A(F)$ is defined in Lemma \ref{affnormallemma}, and the tensor $S_{i}\,^{j}$ defined by
\begin{align}\label{ufshape}
\begin{split}
S_{i}\,^{j} &= -\hnabla_{i}\nm^{j} +\sign(\U(F))\U(F)^{-(n+1)/(n+2)}\H(F)\A(F)\cn_{i}\nm^{j}\\
& = \sign(\U(F))\H(F)|\U(F)|^{-\tfrac{n+1}{n+2}}\left(\delta_{i}\,^{j} - F_{ip}\,^{j}F^{p} 
\right.\\ &\qquad \left. 
+\left(d_{i}\log\H(F) +\U(F)^{-1}\H(F)\left( (n+2)\A(F) - (n+1)\right)F_{i}\right)F^{j} \right),
\end{split}
\end{align}
satisfies $F_{j}S_{i}\,^{j} = 0$, so  restricts along $\lc_{r}(F, \Om_{I})$ to the equiaffine shape operator of $\lc_{r}(F, \Om_{I})$. 
\end{lemma}

\begin{proof}
By the assumption that $\U(F)$ does not vanish on $\Om_{I}$, $F^{i}$ is transverse to $\lc_{r}(F, \Om)$ for $r \in I$. The flow of a smooth vector field $X$ preserves a codimension one smooth foliation of an open manifold $M$ if and only if $\be \wedge \lie_{X}\be = 0$ for any one-form $\be$ annihilating the tangent distribution of the foliation. In particular, \eqref{uflie} is equivalent to $\cn \wedge \lie_{\nm}\cn = 0$. By the definition of $\cn$, \eqref{uf1} holds if and only if $d\cn = \muf \wedge \cn = 0$, and by \eqref{affnorm}, this holds if and only if $\nm^{i} = -\sign(\U(F))\H(F)|\U(F)|^{-(n+1)/(n+2)}F^{i}$. This shows \eqref{uf1}$\iff$\eqref{ufcn}$\iff$\eqref{ufnormal}. From $d\cn = \cn \wedge \muf$ there follow $\lie_{\nm}\cn = \imt(\nm)d\cn = \muf - \muf(\nm)\cn$ and $\cn \wedge \lie_{\nm}\cn = \cn \wedge \muf = d\cn$, which yield the implications \eqref{ufcn}$\implies$\eqref{ufliecn}$\implies$\eqref{uflie}$\implies$\eqref{ufcn}. Lemma \ref{fmclemma} gives the mean curvature of a level set of $F$ with respect to the transversal $F^{i}$. Rescaling a transversal by $e^{f}$ multiplies the corresponding shape operator by $e^{f}$. With this observation, \eqref{ufnormal} and Lemma \ref{fmclemma} imply that the affine mean curvature of $\lc_{r}(F, \Om_{I})$ is \eqref{ufmc}. Likewise, applying this observation to \eqref{tauextended}, and simplifying using $\muf_{i} = |dF|^{-2}_{g}(1 - \A(F))F_{i}$ yields the second equality of \eqref{ufshape}. That $F_{j}S_{i}\,^{j} = 0$ follows from $F_{j}\tilde{S}_{i}\,^{j} = 0$. 
The first equality of \eqref{ufshape} follows from \eqref{affnorm} and \eqref{hnablacn}.
\end{proof}

A vector field $X$ is \textit{projectively geodesic} with respect to the torsion-free affine connection $\nabla$ if its integral curves are projective (unparameterized) geodesics of $\nabla$, that is, if $X\wedge \nabla_{X}X= 0$. If $\tilde{X} = cX$ and $\tnabla = \nabla + 2\ga_{(i}\delta_{j)}\,^{k}$ then $\tilde{X}\wedge \tnabla_{\tilde{X}}\tilde{X} = c^{3}X \wedge \nabla_{X}X$, so this condition depends only on the span of $X$ and the projective equivalence class $\en$ of $\nabla$. Consequently, it makes sense to say that a line field is \textit{projectively geodesic} with respect to $\en$ if any local section $X$ of the line field is projectively geodesic with respect to any representative $\nabla \in \en$. For example, for $F$ with $dF$ and $\H(F)$ nonvanishing, the line field spanned by $F^{i}$ is always projectively geodesic with respect to the $g$-conjugate connection $\bnabla = \nabla + F_{ij}\,^{k}$. That the affine normal distribution be projectively geodesic with respect to $\hnabla$ means that the images of the integral curves of $\nm$ are contained in straight lines. 

\begin{lemma}\label{spherechartlemma}
Suppose given connected open subsets $\Om \subset \rea^{n+1}$ and $I \subset \rea$ and a function $F \in \cinf(\Om)$ such that neither $\H(F)$ nor $\U(F)$ vanishes on $\Om_{I} = F^{-1}(I) \cap \Om$, and $\U(F)$ is locally constant on $\lc_{r}(F, \Om_{I})$ for $r \in I$. Let $D$ be the Levi-Civita connection of $g_{ij}$. Then the following are equivalent:
\begin{enumerate}
\item\label{ug0} The affine normal distribution is projectively geodesic with respect to $\hnabla$.
\item\label{ug1} The affine normal distribution is projectively geodesic with respect to $D$.
\item\label{ug1b} The affine normal distribution is projectively geodesic with respect to every member of the affine line in the space of of connections passing through $\hnabla$ and $D$.
\item\label{ug2} $\H(F)$ is locally constant on the level sets $\lc_{r}(F, \Om_{I})$ for $r \in I$. 
\end{enumerate}
\end{lemma}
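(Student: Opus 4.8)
The plan is to leverage that, under the running hypothesis, Lemma~\ref{uflemma}\eqref{ufnormal} pins down the affine normal distribution: by \eqref{ufaffnorm} the equiaffine normal $\nm^{i}$ is a nowhere-vanishing multiple of $F^{i} = g^{ij}F_{j}$, so the affine normal distribution is the line field spanned by the nowhere-zero section $X^{i} := F^{i}$ on $\Om_{I}$. Since projective geodesy of a line field relative to a connection is detected by any single nonvanishing section, it suffices to compare $X\wedge\nabla_{X}X$ across the relevant connections, and I would organize the three connection-theoretic conditions around the $g$-conjugate connection $\bnabla = \hnabla + F_{ij}\,^{k}$, where $F_{ij}\,^{k} = g^{kl}F_{ijl}$. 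Two identities should be isolated first: (i) $\hnabla_{p}F^{j} = \delta_{p}\,^{j} - g^{ja}F_{pab}F^{b}$, which follows from $\hnabla_{p}g^{jk} = -g^{ja}g^{kb}F_{pab}$ together with $g^{jk}F_{pk} = \delta_{p}\,^{j}$; and (ii), adding the difference tensor $F_{ij}\,^{k}$ to (i), $\bnabla_{p}F^{j} = \delta_{p}\,^{j}$, which is exactly the observation recalled before the lemma that $X$ is always $\bnabla$-projectively geodesic. Moreover $D = \tfrac{1}{2}(\hnabla + \bnabla)$ (the Levi-Civita connection of a Hessian metric is the midpoint of the flat connection and its conjugate), so the affine line through $\hnabla$ and $D$ is precisely $\{\nabla^{(t)} = (1-\tfrac{t}{2})\hnabla + \tfrac{t}{2}\bnabla : t \in \rea\}$, realizing $\hnabla$, $D$, and $\bnabla$ at $t = 0, 1, 2$.

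Granting (i), (ii), and this parametrization, the core computation is short. From (ii), $X^{p}\bnabla_{p}X^{j} = X^{j}$, so $X\wedge\bnabla_{X}X = 0$ and hence $X\wedge\nabla^{(t)}_{X}X = (1-\tfrac{t}{2})\,X\wedge\hnabla_{X}X$ for every $t$. Thus the line field is $\nabla^{(t)}$-projectively geodesic automatically at $t = 2$ (which is consistent with, not in conflict with, the claimed equivalence), and for every other $t$ — in particular for $\hnabla$ at $t = 0$ and for $D$ at $t = 1$ — if and only if $X\wedge\hnabla_{X}X = 0$; this establishes the mutual equivalence of \eqref{ug0}, \eqref{ug1}, and \eqref{ug1b}. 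It then remains to evaluate $X\wedge\hnabla_{X}X$: by (i), $X^{p}\hnabla_{p}X^{j} = F^{j} - g^{ja}F_{pab}F^{p}F^{b}$, and substituting \eqref{ddfs} in the form $F_{pab}F^{p}F^{b} = 2F_{a} - d_{a}|dF|_{g}^{2}$ gives $X^{p}\hnabla_{p}X^{j} = -F^{j} + g^{ja}d_{a}|dF|_{g}^{2}$. Hence $X^{[j}(X^{p}\hnabla_{p}X^{k]}) = F^{[j}g^{k]a}d_{a}|dF|_{g}^{2}$, which vanishes if and only if $dF \wedge d|dF|_{g}^{2} = 0$, that is, if and only if $|dF|_{g}^{2}$ is locally constant on each level set $\lc_{r}(F, \Om_{I})$.

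Finally, since $|dF|_{g}^{2} = \U(F)\H(F)^{-1}$ and, by the hypothesis of the lemma, $\U(F)$ is locally constant on each $\lc_{r}(F, \Om_{I})$, the function $|dF|_{g}^{2}$ is locally constant on the level sets of $F$ exactly when $\H(F)$ is; this gives the equivalence with \eqref{ug2} and closes the chain. I expect the only point needing genuine care to be the bookkeeping in the first step — getting the Hessian-metric Christoffel formula right and correctly locating $\bnabla$ on the affine line through $\hnabla$ and $D$ — after which identities (i), (ii) and the evaluation of $X\wedge\hnabla_{X}X$ via \eqref{ddfs} are routine.
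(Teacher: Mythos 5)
Your proof is correct and follows essentially the same route as the paper's: both rest on Lemma \ref{uflemma} identifying the affine normal distribution with the span of $F^{i}$, on the observation that along the affine line $\hnabla + tF_{ij}\,^{k}$ the condition $\gf\wedge\tnabla_{\gf}\gf=0$ is automatic at the $g$-conjugate connection and $t$-independent elsewhere, and on \eqref{ddfs} together with $\U(F)=\H(F)|dF|_{g}^{2}$ to convert the geodesy condition into $d\H(F)\wedge dF=0$. The only (harmless) difference is that you reach \eqref{ug2} by computing $F^{p}\hnabla_{p}F^{j}$ directly, whereas the paper routes the same computation through the shape operator formula \eqref{ufshape} and the Levi-Civita connection $D$.
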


\begin{proof}
By Lemma \ref{uflemma} the affine normal distribution is spanned by $F^{i}$ and there holds \eqref{ufshape}, so for $X^{i}$ tangent to $\lc_{r}(F, \Om_{I})$ there holds 
\begin{align}\label{uga}
\begin{split}
0 &= \sign(\U(F))\H(F)^{-1}|\U(F)|^{(n+1)/(n+2)}F_{j}X^{i}\hnabla_{i}\nm^{j}\\& = F_{j}\left(X^{j} - F_{pq}\,^{j}X^{q}F^{q} + X^{p}d_{p}\log\H(F)F^{j}\right) = \left(|dF|^{2}_{g}d_{j}\log\H(F) - F_{pqj}F^{p}F^{q}\right)X^{j},\\
&= \left(|dF|^{2}_{g}d_{j}\log\H(F) + 2F^{p}D_{p}F_{j}\right)X^{j},
\end{split}
\end{align}
in which the last equality follows from $2F^{p}D_{p}F_{i} = d_{i}|dF|_{g}^{2} = 2F_{i} - F_{ipq}F^{p}F^{q}$. Equation \eqref{uga} implies that $d\log\H(F) \wedge dF = 0$ if and only if $\gf \wedge D_{\gf}\gf = 0$, where $\gf$ means $F^{i}$. This shows the equivalence of \eqref{ug1} and \eqref{ug2}. By the affine line of connections determined by $\hnabla$ and $D$ is meant the one-parameter family of affine connections $\tnabla = \nabla + tF_{ij}\,^{k}$. The cases $ t= 1/2$ and $t = 1$ yield, respectively, $D$ and the connection $\bnabla$ that is $g$-conjugate to $\nabla$. Since $F^{p}\tnabla_{q}F^{i} = F^{i} + (t - 1)F^{p}F^{q}F_{pq}\,^{i}$, for $t \neq 1$ there holds $\gf \wedge \tnabla_{\gf}\gf = 0$ if and only if there holds $F_{[i}F_{j]pq}F^{p}F^{q} = 0$. Since $\gf$ is always projectively geodesic for $\bnabla$, this shows the equivalence of \eqref{ug0}, \eqref{ug1}, and \eqref{ug1b}. 
\end{proof}

Say that $F$ is \textit{locally auto-isoparametric} on $\Om_{I}$ if $\lap_{g}F = g^{ij}D_{i}F_{j}$ and $|dF|_{g}^{2}$ are locally constant on $\lc_{r}(F, \Om_{I})$ for each $r \in I$; that is, $F$ is an isoparametric function with respect to $g_{ij}$.

\begin{lemma}\label{autoisolemma}
Suppose given connected open subsets $\Om \subset \rea^{n+1}$ and $I \subset \rea$ and a function $F \in \cinf(\Om)$ such that neither $\H(F)$ nor $\U(F)$ vanishes on $\Om_{I} = F^{-1}(I) \cap \Om$. Suppose that $\U(F)$ and $\H(F)$ are locally constant on $\lc_{r}(F, \Om_{I})$ for each $r \in I$. Then $F$ is locally auto-isoparametric on $\Om_{I}$. 
\end{lemma}
\begin{proof}
The hypotheses mean $d\log \H(F) \wedge dF = 0$ and $\muf \wedge dF = 0$, so, by \eqref{ufdet}, $d|dF|^{2}_{g} \wedge dF = 0$. Since $d_{i}|dF|^{2}_{g} = 2F_{i} - F_{ipq}F^{p}F^{q}$, there is a smooth function $b$ on $\Om_{I}$ such that $F_{ipq}F^{p}F^{q} = bF_{i}$. Differentiating $\lap_{g}F = n+1 - (1/2)F^{p}d_{p}\log\H(F)$ yields
\begin{equation}\label{dlapf}
-2d_{i}\lap_{g}F = d_{i}\log\H(F) + F^{p}\hnabla_{i}d_{p}\log\H(F) - F_{ip}\,^{q}F^{p}d_{q}\log\H(F).
\end{equation}
There is a smooth function $a$ on $\Om_{I}$ such that $d_{i}\log\H(F) = a F_{i}$. Since $\hnabla_{i}d_{j}\log\H(F) = ag_{ij} + a_{i}F_{j}$ is symmetric there is a smooth function $c$ on $\Om_{I}$ such that $a_{i} = cF_{i}$. In \eqref{dlapf} this yields
\begin{equation}
-2d_{i}\lap_{g}F = aF_{i} + F^{p}(ag_{ip} + cF_{i}F_{p}) - aF_{ipq}F^{p}F^{q} = (a(2-b) + c|dF|^{2}_{g})F_{i},
\end{equation}
which proves that $dF\wedge d\lap_{g}F = 0$.
\end{proof}

The remainder of this section is devoted to the proof of Theorem \ref{ahtheorem}. Let $\la, \al \in \rea$ and $\ep \in \{0, 1\}$ and suppose $\la\al = 0$, $\ep\al = 0$, and $\la \neq 1-\ep$. A function $F \in \cinf(\Om \setminus \lc_{-\al/\la}(F, \Om)) \cap C^{0}(\Om) $ is in $\amg^{\la, \ep}_{\al}$ if and only if there is a $v \in \rea^{n+1}$ such that $\rad^{p}F_{p} = \la F + \al$, where $\rad^{i} = (1-\ep)\eul^{i} +(2\ep -1)v^{i}$. Since $\hnabla_{i}\rad^{j} = (1-\ep)\delta_{i}\,^{j}$, differentiating $\rad^{p}F_{p} = \la F + \al$ yields $\rad^{p}F_{pi_{1}\dots i_{k}} = (\la +k(\ep -1))F_{i_{1}\dots i_{k}}$. Then, for $F \in \amg^{\la, \ep}_{\al}$ there hold 
\begin{align}\label{hompol1}
&(\la - 1 + \ep)F^{i}= \rad^{i},&
&(\la - 1 + \ep)\U(F)  = (\la F + \al)\H(F).
\end{align}
Tracing $\rad^{p}F_{ijp} = (\la - 2 + 2\ep)F_{ij}$ and combining the result with \eqref{hompol1} yields
\begin{align}\label{hompol2}
&\rad^{p}\H(F)_{p} = \H(F)\rad^{p}F_{pq}\,^{q} = (n+1)(\la - 2 +2\ep)\H(F), &
&\A(F) = \tfrac{n(1-\ep)}{(n+2)(\la -1 + \ep)}.
\end{align}

\begin{lemma}\label{homognondegenlemma}
Given an open domain $\Om \subset \rea^{n+1}$ and constants $\la, \al, r \in\rea$ and $\ep \in \{0, 1\}$ such that $\la \al = 0$, $\ep\al = 0$, and $\la r + \al \neq 0$, the level set $\lc_{r}(F, \Om)$ of $F \in \amg^{\la, \ep}_{\al}(\Om)$ is smoothly immersed and transverse to $\rad^{i}= (1-\ep)\eul^{i} +(2\ep -1) v^{i}$ for some $v \in \rea^{n+1}$. If, moreover, $\la \neq 1-\ep$, then the level set $\lc_{r}(F, \Om)$ is nondegenerate if and only if $\H(F)$ does not vanish on $\lc_{r}(F, \Om)$, in which case $dF$ and $\U(F)$ do not vanish on $\lc_{r}(F, \Om)$, and the equiaffine normal $\nm^{i}$ of $\lc_{r}(F, \Om)$ is
\begin{equation}\label{nm2}
\nm^{i} =  -\tfrac{1}{(n+2)(\la-1+\ep)}\left|\tfrac{(\la F + \al)\H(F)}{\la - 1+\ep}\right|^{1/(n+2)}\left(\tfrac{n(1-\ep)+\la}{\la F + \al }\rad^{i} + (\la - 1+\ep)(d\log\H(F))^{i}\right).
\end{equation}
\end{lemma}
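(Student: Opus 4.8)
The plan is to prove Lemma \ref{homognondegenlemma} by exploiting the homogeneity identities \eqref{hompol1} and \eqref{hompol2} that have already been derived, and then plugging into the general formula \eqref{nm1} (equivalently \eqref{affnorm}) for the equiaffine normal. First I would establish that $\lc_{r}(F, \Omega)$ is smoothly immersed and transverse to $\rad^{i}$: differentiating $\rad^{p}F_{p} = \la F + \al$ and evaluating on the level set gives $\rad^{p}F_{p} = \la r + \al \neq 0$ by hypothesis, so $dF$ does not annihilate $\rad^{i}$ at any point of $\lc_{r}$, whence $dF$ is nonvanishing there (so the level set is a smooth immersed hypersurface) and $\rad^{i}$ is transverse to it. This part uses only the hypothesis $\la r + \al \neq 0$ and not $\la \neq 1 - \ep$.

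Next, assume additionally $\la \neq 1 - \ep$. From the second identity of \eqref{hompol1}, $(\la - 1 + \ep)\U(F) = (\la F + \al)\H(F)$; on $\lc_{r}$ the factor $\la F + \al = \la r + \al$ is a nonzero constant and $\la - 1 + \ep \neq 0$, so $\U(F)$ vanishes on $\lc_{r}$ exactly where $\H(F)$ does. Combining with the first identity $(\la - 1 + \ep)F^{i} = \rad^{i}$, which shows $F^{i}$ is a nonzero constant multiple of the transverse field $\rad^{i}$ (hence nonvanishing wherever defined), and with the standard identity $\U(F) = \H(F)|dF|^{2}_{g}$, I conclude that on the locus where $\H(F) \neq 0$ the Hessian $g_{ij}$ is nondegenerate, $dF$ is nonvanishing, and $\U(F)$ is nonvanishing — i.e. precisely the setup of section \ref{ufsection} applies. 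For the converse direction of the nondegeneracy claim, note that by \eqref{hessfh} the second fundamental form of $\lc_{r}$ with respect to the transversal $F^{i}$ is $-\U(F)^{-1}(g_{ij} - \H(F)\U(F)^{-1}F_{i}F_{j})$ restricted to the tangent space, and its determinant is (up to a nonzero factor coming from \eqref{ufdet}) a nonzero multiple of $\U(F) = $ nonzero multiple of $\H(F)$ on $\lc_{r}$; so $\lc_{r}$ is nondegenerate if and only if $\H(F)$ does not vanish on it.

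Finally, for the formula \eqref{nm2}: substitute the homogeneity data into the general expression \eqref{nm1} for the equiaffine normal. Using $F^{i} = (\la - 1 + \ep)^{-1}\rad^{i}$ from \eqref{hompol1}, the value $\A(F) = \tfrac{n(1-\ep)}{(n+2)(\la - 1 + \ep)}$ from \eqref{hompol2}, and $\U(F) = (\la - 1 + \ep)^{-1}(\la F + \al)\H(F)$, the scalar prefactor $|\U(F)|^{1/(n+2)}$ becomes $\left|\tfrac{(\la F + \al)\H(F)}{\la - 1 + \ep}\right|^{1/(n+2)}$. For the term $\muf^{i} = g^{ij}\muf_{j}$ with $\muf_{i} = \tfrac{1}{n+2}d_{i}\log\U(F)$: differentiating $\log\U(F) = \log|\la F + \al| + \log|\H(F)| - \log|\la - 1 + \ep|$ gives $(n+2)\muf_{i} = \tfrac{\la}{\la F + \al}F_{i} + d_{i}\log\H(F)$, so $g^{ij}\muf_{j}$ contributes a multiple of $F^{i} = (\la-1+\ep)^{-1}\rad^{i}$ together with a multiple of $(d\log\H(F))^{i}$. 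Collecting the coefficient of $\rad^{i}$ — combining the $\U(F)^{-1}\H(F)\A(F)F^{i}$ term with the $F_{i}$-part of $\muf^{i}$ — should produce exactly $\tfrac{n(1-\ep)+\la}{\la F + \al}$ after using the identity $\U(F)^{-1}\H(F) = (\la - 1 + \ep)(\la F + \al)^{-1}$, while the overall $(\la - 1 + \ep)$-powers assemble into the displayed prefactor $-\tfrac{1}{(n+2)(\la - 1 + \ep)}$. I expect this last bookkeeping of constants — tracking the powers of $(\la - 1 + \ep)$ through $\muf^{i} = g^{ij}\muf_{j}$ and the $|\U(F)|^{1/(n+2)}$ factor — to be the one genuinely delicate point; everything else is a direct application of identities already in hand. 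One should also double-check the sign: \eqref{nm1} carries $-|\U(F)|^{1/(n+2)}$ and the stated \eqref{nm2} carries an overall minus sign with a positive prefactor $\tfrac{1}{(n+2)(\la-1+\ep)}$ only when $\la - 1 + \ep > 0$, so the $\sign$ conventions of section \ref{affinenormalsection} (co-orientation by $-\sign(\U(F)\H(F))F^{i}$) must be reconciled with the raw expression, which is routine but must be done carefully.
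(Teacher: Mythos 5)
Your first step (smooth immersion and transversality from $\rad^{p}F_{p} = \la r + \al \neq 0$ on the level set) and your final substitution of \eqref{hompol1} and \eqref{hompol2} into \eqref{nm1} are exactly the paper's argument, and your bookkeeping checks out: the coefficient of $\rad^{i}$ assembles to $\tfrac{n(1-\ep)+\la}{(n+2)(\la-1+\ep)(\la F+\al)}$, the prefactor $|\U(F)|^{1/(n+2)}$ becomes $\left|\tfrac{(\la F+\al)\H(F)}{\la-1+\ep}\right|^{1/(n+2)}$, and there is no hidden sign discrepancy to reconcile.

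The gap is in the equivalence ``$\lc_{r}(F,\Omega)$ nondegenerate $\iff$ $\H(F)\neq 0$ on $\lc_{r}(F,\Omega)$''. For the direction you call the converse you invoke \eqref{hessfh} and the transversal $F^{i}=g^{ij}F_{j}$; but $F^{i}$, $g^{ij}$, and $\U(F)^{-1}$ exist only where $\H(F)\neq 0$, which is precisely what that implication is supposed to establish. At a point of $\lc_{r}$ where $\H(F)=0$ your formula for the second fundamental form is undefined, so you cannot conclude from it that the hypersurface degenerates there. The paper avoids this circularity by computing the second fundamental form $h$ with respect to $\rad^{i}$, which the first part of the lemma already shows is transverse everywhere on $\lc_{r}$ independently of $\H(F)$: the identities $\hess F(X,Y)=-(\la r+\al)h(X,Y)$, $\hess F(X,\rad)=0$, and $\hess F(\rad,\rad)=(\la-1+\ep)(\la r+\al)\neq 0$ (equation \eqref{hdl}) put $\hess F$ in block form, so $\det\hess F$ is a nonzero multiple of $\det h$ and both directions of the equivalence follow simultaneously; the nonvanishing of $dF$ and $\U(F)$ then comes from \eqref{hompol1} as you say. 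With that one substitution your proof goes through.
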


\begin{proof}
By assumption there is $v \in \rea^{n+1}$ such that $\rad^{i}F_{i}(x) = \la F(x) + \al$, where $\rad^{i} = (1-\ep)\eul^{i} +(2\ep -1) v^{i}$. Since for $x \in \lc_{r}(F, \Om)$, $\rad^{i}F_{i}(x) = \la r + \al \neq 0$, $dF$ does not vanish on $\lc_{r}(F, \Om)$ and so the level set $\lc_{r}(F, \Om)$ is smoothly immersed; moreover, the vector field $\rad$ is transverse to $\lc_{r}(F, \Om)$. Let $h$ be the corresponding second fundamental form. For $X$ and $Y$ tangent to $\lc_{r}(F, \Om)$ there hold 
\begin{align}\label{hdl}
\begin{split}
&\hess F(X, Y) = (\hnabla_{X}dF)(Y) = -dF(\rad)h(X, Y) = -(\la r + \al)h(X, Y), \\
&\hess F(X, \rad) = (\la-1+\ep)dF(X) = 0,\\
&\hess F(\rad, \rad) = (\hnabla_{\rad}dF)(\rad) = (\la-1+\ep)(\la r + \al),
\end{split}
\end{align}
along $\lc_{r}(F, \Om)$. If $\la \neq 1-\ep$ and $\la r + \al\neq 0$, it follows from \eqref{hdl} that $h$ is nondegenerate if and only if $\hess F$ is nondegenerate on $\lc_{r}(F, \Om)$, or, equivalently, $\H(F)$ does not vanish along $\lc_{r}(F, \Om)$. In this case, since $\H(F)$ does not vanish on $\lc_{r}(F, \Om)$ it follows from \eqref{hompol1} and $\la r + \al \neq 0$ that neither $dF$ nor $\U(F)$ vanishes there. Substituting \eqref{hompol1} and \eqref{hompol2} into \eqref{affnorm} yields \eqref{nm2}.
\end{proof}

\begin{proof}[Proof of Theorem \ref{ahtheorem}]
The equivalence of \eqref{aht2} and \eqref{aht3} of Theorem \ref{ahtheorem}, and that in this case that $\psi$ is related to $\phi$ as indicated in \eqref{phiforms}, are immediate from \eqref{hompol1} and \eqref{hompol2}. 

Suppose there holds \eqref{aht1proper} or \eqref{aht1improper} of Theorem \ref{ahtheorem}. That is, $F \in \amg^{\la, \ep}_{\al}(\Om)$ and there is a connected open interval $I \subset \rea \setminus\{-\al/\la\}$ such that for all $r \in I$ each level set $\lc_{r}(F, \Om_{I})$, equipped with the co-orientation of its components consistent with $-\sign(|dF|^{2}_{g})F^{i}$, is an affine sphere with center $v$ and affine mean curvature $\amc$, if $\ep = 0$, or is an affine sphere with affine normal parallel to $v$, if $\ep = 1$. Because by assumption each connected component of $\lc_{r}(F, \Om_{I})$ is nondegenerate, Lemma \ref{homognondegenlemma} implies that neither $\H(F)$ nor $\U(F)$ vanishes on $\lc_{r}(F, \Om_{I})$. A posteriori, this justifies assigning to each component the co-orientation given by $-\sign(|dF|^{2}_{g})F^{i}$.

Suppose $\ep = 0$. By assumption $\nm^{i} = -\amc\rad^{i}$ along $\lc_{r}(F, \Om_{I})$. Comparing with \eqref{nm2} shows that $d_{i}\log\H(F)$ is a multiple of $\rad^{p}g_{pi} = (\la - 1)F_{i}$, which implies that $\H(F)$ is locally constant on $\lc_{r}(F, \Om_{I})$. Contracting $-\amc \rad^{i}$ with $\cn_{i}$ shows that, along $\lc_{r}(F, \Om_{I})$,
\begin{equation}\label{hfrel}
\amc = \sign(\la r + \al)|\la - 1|^{-1/(n+2)}|\la r + \al|^{-(n+1)/(n+2)}|\H(F)|^{1/(n+2)}.
\end{equation}
Consequently, $|\H(F)| = \sign(\la r + \al)|\la - 1|(\la r + \al)^{n+1}\amc^{n+2}$. The sign of $\H(F)$ does not change on each connected component of $\lc_{r}(F, \Om_{I})$. By assumption the signatures of the second fundamental forms of the connected components of $\lc_{r}(F, \Om_{I})$ are the same modulo $4$, and by \eqref{hdl} this implies that the signatures of $\hess F$ on the different connected components are the same modulo $4$, and so the signs of $\H(F)$ on the different connected components must be the same. This means that $|\H(F)|$ can be replaced coherently by one of $\pm \H(F)$ on all of $\lc_{r}(F, \Om_{I})$. Since, also by assumption, the values of $\amc$ on the different connected components are the same, it follows that $\H(F)$ is constant on $\lc_{r}(F, \Om_{I})$. Since this holds for each $r \in I$, there is a function $\phi$ defined on $I$ such that $\H(F) = \phi(F)$ on $\Om_{I}$. This proves the implication \eqref{aht1proper}$\implies$\eqref{aht2} of Theorem \ref{ahtheorem}. Now suppose $\ep = 1$. Then, by assumption, $\nm \wedge v = 0$ along $\lc_{r}(F, \Om_{I})$. Comparing with \eqref{nm2} shows that $d_{i}\log\H(F) = cv_{i} = c\la F_{i}$ for some $c$ locally constant on $\lc_{r}(F, \Om_{I})$. Contracting with $v^{i} = \la F^{i}$ and using \eqref{hompol2} yields $(n+1)\la = c\la^{2}F$, so that $d_{i}\log\H(F) = (n+1)F^{-1}F_{i}$. Hence $F^{-n-1}\H(F)$ is locally constant on $\lc_{r}(F, \Om_{I})$. As in the $\ep = 0$ case the assumption that the signatures of the second fundamental forms of the connected components of $\lc_{r}(F, \Om_{I})$ are the same modulo $4$ implies that the sign of $\H(F)$ is constant on $\lc_{r}(F, \Om_{I})$. Since, by assumption, $\nm^{i} = cv^{i}$ for $c$ depending only on $r$ and not the connected component of $\lc_{r}(F, \Om_{I})$, it follows from \eqref{nm2} that $\H(F)$ is constant on $\lc_{r}(F, \Om_{I})$.

The implications \eqref{aht2}$\implies$\eqref{aht1proper} and \eqref{aht2}$\implies$\eqref{aht1improper} of Theorem \ref{ahtheorem} are proved as follows. Suppose $\la \neq 1 - \ep$ and $\la r + \al \neq 0$ for $r \in I$. If $F \in \amg^{\la, \ep}_{\al}(\Om)$ solves $\H(F) = \phi(F)$ on $\Om_{I}$ for some nonvanishing function $\phi:I \to \rea$, then by Lemma \ref{homognondegenlemma}, each level set $\lc_{r}(F, \Om_{I})$ is nondegenerate  and $dF$ and $\U(F)$ do not vanish on $\lc_{r}(F, \Om_{I})$. In particular, the equiaffine normal $\nm^{i}$ is defined on $\Om_{I}$. Since $\H(F)$ is constant on $\lc_{r}(F, \Om_{I})$, there holds $d\log\H(F) \wedge dF = 0$ on $\Om_{I}$. Hence, by \eqref{hompol1}, $(d\log \H(F))^{i}$ is a multiple of $F^{i} = (\la - 1 + \ep)^{-1}\rad^{i}$. In \eqref{nm2} this shows that $\nm^{i}$ is a multiple of $\rad^{i}$, so that the connected components of $\lc_{r}(F, \Om_{I})$ are affine spheres with a common center $v$, if $\ep = 0$, or with affine normals parallel to $v$, if $\ep = 1$. In the case $\ep = 0$, there holds \eqref{hfrel} on each component of $\lc_{r}(F, \Om_{I})$. Since by assumption $\H(F) = \phi(F)$ depends only on $r$, and not on the component, it follows that the affine mean curvatures of different components of $\lc_{r}(F, \Om_{I})$ are the same. In both cases, from the constancy of $\H(F)$ on each $\lc_{r}(F, \Om_{I})$ and \eqref{hdl} it follows that the signatures of the distinct connected components of $\lc_{r}(F, \Om_{I})$ are the same modulo $4$. 

Suppose given $F \in \amg^{\la, \ep}_{\al}(\Om)$, an open interval $I \subset \rea \setminus\{-\al/\la\}$, and a function $\phi$ defined on $I$ such that $\H(F) = \phi(F)$ for $x \in \Om_{I}$. Since, by \eqref{hompol2}, $\H(F)$ has positive homogeneity $(n+1)(\la - 2 + 2\ep)$, there holds $\phi(e^{\la t}r + \al t) = e^{(n+1)(\la-2 + 2\ep)t}\phi(r)$ for $r \in I$ and $t$ sufficently small. In particular, this shows that $\phi$ is continuous on $I$. Similarly, setting $h(t) = (e^{\la t} - 1) r + \al t$,
\begin{equation}
\lim_{t \to 0} \tfrac{\phi(r + h(t)) - \phi(r)}{h(t)} 
= \lim_{t \to 0} \tfrac{\phi(e^{\la t}r + \al t) - \phi(r)}{(\la r + \al)t} = \lim_{t \to 0}\tfrac{(e^{(n+1)(\la -2 + 2\ep)t} - 1)}{(\la r+ \al)t}\phi(r)= \tfrac{(n+1)(\la -2 + 2\ep)}{\la r + \al}\phi(r),
\end{equation}
so that $\phi$ is differentiable at $r$ and $\phi^{\prime}(r) =  \tfrac{(n+1)(\la -2+2\ep)}{\la r + \al}\phi(r)$. 
When $\la \neq 0$, then $\al = 0$, and the general solution has the form $\phi(r) = B| r|^{(n+1)(\la - 2 + 2\ep)/\la}$ for some $B \neq 0$. If $\ep =1$, then this can be written $\phi(r) = Br^{n+1}$ for some $B \neq 0$. When $\la = 0$ and $\ep = 0$ the general solution has the form $Be^{-2(n+1)r/\al}$ for a nonzero constant $B$. Hence $\phi$ must have one of the forms \eqref{phiforms}. That the affine mean curvature of $\lc_{r}(F, \Om_{I})$ has the form \eqref{ahmc} follows upon substituting \eqref{phiforms} in \eqref{hfrel}.
\end{proof}

\section{Relative invariants of prehomogeneous vector spaces}\label{prehomogeneoussection}
The theory of prehomogeneous vector spaces is due to M. Sato. The background recalled below is culled from \cite{Sato-Kimura}, \cite{Sato-Shintani}, and \cite{Kimura}. A triple $\pv$ comprising a complex vector space $\ste$, a connected complex linear algebraic group $G$, and a rational representation $\rho:G \to GL(\ste)$ is a \textit{prehomogeneous vector space} if there is a proper algebraic subset $\sing \subset \ste$, called the \textit{singular set}, such that $\ste \setminus \sing$ is a single (necessarily Zariski dense) $G$ orbit. While this definition can be extended to a not algebraically closed field such as $\rea$, here it is more convenient to speak of a real form of a prehomogeneous vector space than it is to speak directly of real prehomogeneous vector spaces. 

A nonzero rational function $Q$ on $\ste$ is a \textit{relative invariant} of $\pv$ \textit{corresponding} to a rational character $\chi:G \to \comt$ if $\rho(g)\cdot Q = \chi(g^{-1})Q$ for all $g \in G$. There hold the following for relative invariants of a prehomogeneous vector space (see Proposition $2$ in \cite{Sato-Shintani}):
\begin{enumerate}
\item Any two relative invariants corresponding to a given rational character are related by multiplication by a nonzero scalar.
\item Any prime divisor of a relatively invariant polynomial is again a relative invariant.
\item A relative invariant is a homogeneous function.
\end{enumerate}

There is a relative invariant corresponding to the rational character $\chi$ if and only if $\ker \chi$ contains the stabilizer in $G$ of some (and hence any) $v \in \ste \setminus \sing$ (Proposition $4.19$ of \cite{Sato-Kimura}). There exists a relative invariant of $\pv$ if and only if $\sing$ has an (Zariski) irreducible component of codimension one (Corollary $4.6$ of \cite{Sato-Kimura}). In this case each of the irreducible components $\sing_{1}, \dots, \sing_{l}$ of the codimension one part of $\sing$ is the zero locus of an irreducible polynomial $P_{i}$. The $P_{1}, \dots, P_{l}$ are algebraically independent relative invariants and the multiplicative group of relative invariants is the free abelian group they generate. That is, any relative invariant has the form $\la P_{1}^{n_{1}}\dots P_{l}^{n_{l}}$ for some $\la \in \com$ and $(n_{1}, \dots, n_{l}) \in \integer^{l}$, and the group of characters corresponding to relative invariants is the rank $l$ free abelian group generated by the rational characters $\chi_{i}$ corresponding to the $P_{i}$. In particular, $\pv$ has a relative invariant if and only if $\sing$ has an irreducible component of codimension one. In the case that $\sing$ has a unique codimension one irreducible component, every relative invariant is a constant multiple of an integer power of the irreducible polynomial $P$ defining that component. In this case $P$ is referred to as \textit{the} relative invariant of $\pv$, or the \textit{fundamental} relative invariant.

Given a complex prehomogeneous vector space $\pv$, identify the complex structure on $\ste$ with an almost complex structure $J$, and let $\nabla$ be the flat torsion-free affine connection such that $\nabla J = 0$ and the $(1,0)$ part $\nabla^{(1, 0)}$ of $\nabla$ is the standard flat holomorphic affine connection. Write $d = \pr + \bar{\pr}$ for the decomposition of the exterior differential into its $(1,0)$ and $(0, 1)$ parts. Then the complex algebraic Hessian of a smooth function $F$ is $\nabla^{(1,0)}\pr F$. Define $\H_{\com}(F)$ by $\det \nabla^{(1, 0)}\pr F = \H(F)\Psi^{2}$ where $\Psi$ is the parallel holomorphic volume form defined by the usual determinant, and $\Psi^{2}$ is its tensor square. Thus $\H(F)$ is defined just as it was over the real field, though now over the complex numbers. This construction works over any field of characteristic zero and could be described in purely algebraic terms, although from the perspective adopted here, in speaking of polynomials there is implicitly fixed a flat connection (with respect to which the differentials of the variables constitute a parallel coframe). If $z^{1}, \dots, z^{n}$  ($n = \dim \ste$) are coordinates such that $dz^{1}, \dots, dz^{n}$ constitute a parallel (complex) coframe, then $\H(F) = \det \tfrac{\pr^{2}F}{\pr z^{i}\pr z^{j}}$ is the determinant of the matrix of second partial (complex) derivatives of $F$. More generally, if $X_{1}, \dots, X_{n}$ is a frame in $\ste$ such that $\Psi(X_{1}, \dots, X_{n})^{2} = 1$, then $\H(F)$ is the determinant of the matrix whose entries are $(\nabla dF)^{(2, 0)}(X_{i}, X_{j})$. If, moreover, $F$ is holomorphic, then $(\nabla dF)^{(2, 0)} = \nabla \pr F$. 
When it is necessary to distinguish the real and complex versions of $\H(F)$ they will be written $\H^{\rea}(F)$ or $\H^{\com}(F)$, but otherwise the superscript will be omitted, as the interpretation should be clear from context.

A prehomogeneous vector space $\pv$ is \textit{regular} if there is a relative invariant $P$ such that $\H(P)$ does not vanish identically. In this case $\H(P)$ is also a relative invariant, with the character $(\det^{-2} \rho) \tensor \chi^{\dim \ste}$ (where $\chi$ is the character of $P$), for if $g \in G$ then
\begin{align}\label{hprel}
\rho(g)\cdot \H(P) =  (\det\rho(g))^{2}\H(\rho(g) \cdot P) = (\det\rho(g))^{2}\H(\chi(g^{-1})P) = (\det\rho(g))^{2}\chi(g)^{-\dim \ste}\H(P). 
\end{align} 
Since $\H(P)$ is a relative invariant, it has the form $\la P_{1}^{n_{1}}\dots P_{l}^{n_{l}}$ with nonnegative exponents not all zero, and so it does not vanish on $\ste \setminus \sing$. The triple $\pv$ is \textit{irreducible} if $\rho$ is irreducible. If $\pv$ is irreducible then, by Proposition $4.12$ of \cite{Sato-Kimura}, there is up to multiplication by a constant factor at most one irreducible relative invariant polynomial $P$. In this case the degree $k$ of the fundamental relative invariant $P$ divides $2\dim\ste$, and it follows from \eqref{hprel} that the corresponding $\chi$ satisfies $\chi^{2\dim\ste/k} = (\det \rho)^{2}$. In particular, a regular irreducible prehomogeneous vector space has a unique (up to multiplication by a nonzero constant) irreducible relatively invariant polynomial $P$. What is essential here is the following consequence. Since $\H(P)$ is also relatively invariant, there must hold $\H(P) = \kc P^{m}$ for some $\kc \in \comt$ and some nonnegative integer $m$. Note, however, that solutions of \eqref{ma} have not yet been obtained, since in general $P$ has complex coefficients.

A \textit{real structure} on a complex vector space $\ste$ means a nontrivial antilinear involution $\tau :\ste \to \ste$. The \textit{real points} $\ste_{\rea}$ are those elements of $\ste$ fixed by $\tau$. There results an identification $\ste \simeq \ste_{\rea}\tensor_{\rea}\com$. A real structure $\tau$ on $\ste$ induces a real structure on $\eno(\ste)$ defined by $\tau(\psi) = \tau \circ \psi \circ \tau$ for $\psi \in \eno(\ste)$. For the definition of a real structure on a complex algebraic variety, and the corresponding notion of a real form of a complex linear algebraic group, see \cite{Borel-algebraicgroups}. A \textit{real form} of $\pv$ means a triple $\pvr$ comprising a real form $G_{\rea}$ of $G$ and a real form $\ste_{\rea}$ of $\ste$ such that $\rho:G \to GL(\ste)$ is an $\rea$-rational representation. In particular, $\rho(G_{\rea}) \subset \rho(G)_{\rea} \subset GL(\ste_{\rea})$. Equivalently, there are given real structures $\si$ and $\tau$ on $G$ and $\ste$, respectively, compatible with $\rho$ in the sense that $\rho(\si(g)) = \tau(\rho(g)) = \tau \circ \rho(g) \circ \tau$ for all $g \in G$, where $\tau(\rho(g))$ indicates the action on $\rho(g)$ of the induced real structure on $\eno(\ste)$, so that $\rho(\si g)\cdot v = \tau(\rho(g))\cdot v = \tau(\rho(g)\cdot \tau(v))$. In this case the fixed point sets of $\si$ and $\tau$ are written $G_{\rea}$ and $\ste_{\rea}$, and are called the real points of $G$ and $\ste$, and $\pv$ is said to be defined over $\rea$. It is common in the literature to speak of the triple $(\Grea^{+}, \rho, \ste)$, where $\Grea^{+}$ is the connected component of the identity in $\Grea$, as a \textit{real prehomogeneous vector space}. Although this has the virtue that it is the dense $\Grea^{+}$ orbit that is most directly the real form of the dense $G$ orbit, here reference will only be made to the real form $\pvr$ as just defined.

As $G$ is by assumption linear algebraic, a complex prehomogeneous vector space $\pv$ can also be regarded as a real representation of the real Lie group underlying $G$ on the real vector space underlying $\ste$. For an irreducible real representation of a real Lie algebra either its complexification is an irreducible complex representation or it is the real representation underlying an irreducible complex representation admitting no invariant complex structure. It follows that, modulo connectedness issues, an irreducible real prehomogeneous vector space is either a real form of an irreducible complex prehomogeneous vector spaces, or the real prehomogeneous vector space underlying a complex prehomogeneous vector space. In either case, the relative invariant of this associated complex prehomogeneous vector space yields the sought after solution of \eqref{ma}, in the manner stated in Theorem \ref{pvtheorem}. Lemma \ref{complexsimplelemma} was communicated to the author by Roland Hildebrand. 

\begin{lemma}[R. Hildebrand]\label{complexsimplelemma}
Let $P$ be a degree $k$ homogeneous complex polynomial $P$ on the $(n+1)$-dimensional complex vector space $\ste$ and suppose $P$ solves $\Hc(P) = \kc P^{m}$ for some $\kc \in \comt$, where $m = (n+1)(k-2)/k$. Then the degree $2k$ homogeneous real polynomial $|P|^{2} = P\bar{P}$ solves
\begin{equation}\label{macomplex}
\H^{\rea}(|P|^{2}) = (-4)^{n+1}|\kc|^{2}\tfrac{1-2k}{(k-1)^{2}}|P|^{2(n+1 + m)}.
\end{equation}
Here the operator $\Hc$ is defined with respect to the standard holomorphic volume form $\Phi$ on $\ste$, while the operator $\H^{\rea}$ is defined with respect to the real volume form $\Psi = (-2\j)^{-n-1}\Phi \wedge \bar{\Phi}$.
\end{lemma}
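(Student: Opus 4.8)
\emph{Proof proposal.} The plan is to compute $\H^{\rea}(|P|^{2})$, where $|P|^{2} = P\bar{P}$, by working on the real vector space underlying $\ste$ and converting everything to Wirtinger derivatives. Fix complex-linear coordinates $z^{j} = x^{j} + \j\,y^{j}$, $1 \le j \le n+1$, so that $\Phi = dz^{1}\wedge\dots\wedge dz^{n+1}$, and set $\partial_{z^{j}} = \tfrac12(\partial_{x^{j}} - \j\,\partial_{y^{j}})$, $\partial_{\bar{z}^{j}} = \tfrac12(\partial_{x^{j}} + \j\,\partial_{y^{j}})$. The matrix $M$ expressing $(\partial_{x^{1}},\dots,\partial_{x^{n+1}},\partial_{y^{1}},\dots,\partial_{y^{n+1}})$ in terms of $(\partial_{z^{1}},\dots,\partial_{z^{n+1}},\partial_{\bar{z}^{1}},\dots,\partial_{\bar{z}^{n+1}})$ is built from $2\times2$ scalar blocks and satisfies $\det M = (-2\j)^{n+1}$; hence for any twice-differentiable $u$ the real Hessian determinant in the $(x^{j},y^{j})$ coordinates equals $(\det M)^{2}$ times the determinant of the $(2n+2)\times(2n+2)$ matrix of second Wirtinger derivatives, i.e.\ $(-4)^{n+1}$ times it. A short sign count using $dz^{j}\wedge d\bar{z}^{j} = -2\j\,dx^{j}\wedge dy^{j}$ shows that $\big((-2\j)^{-n-1}\Phi\wedge\bar\Phi\big)^{2}$ is the square of the standard volume form $dx^{1}\wedge dy^{1}\wedge\dots\wedge dx^{n+1}\wedge dy^{n+1}$, so $\H^{\rea}(|P|^{2})$ is precisely this real Hessian determinant.

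Next I would use holomorphy of $P$ to get the block structure of the Wirtinger Hessian. Writing $Q_{ab} = P_{z^{a}z^{b}}$ for the complex Hessian of $P$ and $p_{a} = P_{z^{a}}$, and using $\partial_{\bar{z}}P = 0$, $\partial_{z}\bar{P} = 0$, one finds
\[
\big(\partial^{2}_{w^{a}w^{b}}|P|^{2}\big) = \begin{pmatrix} \bar{P}\,Q & p\,\bar{p}^{\,T} \\ \bar{p}\,p^{\,T} & P\,\bar{Q} \end{pmatrix},
\]
where $p\bar{p}^{\,T}$ and $\bar{p}p^{\,T}$ denote the rank-one matrices with entries $p_{a}\bar{p}_{b}$ and $\bar{p}_{a}p_{b}$. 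On the Zariski-dense open set where $P \neq 0$ and $\det Q = \Hc(P) = \kc P^{m} \neq 0$ the block $\bar{P}Q$ is invertible, so the Schur complement formula gives $\det = \det(\bar{P}Q)\,\det\!\big(P\bar{Q} - \bar{P}^{-1}(p^{T}Q^{-1}p)\,\bar{p}\,\bar{p}^{\,T}\big)$, and one rank-one application of the matrix determinant lemma reduces the entire computation to the scalar $p^{T}Q^{-1}p$, together with $\det(\bar{P}Q) = \bar{P}^{\,n+1}\kc P^{m}$ and $\det(P\bar{Q}) = P^{\,n+1}\bar\kc\,\bar{P}^{\,m}$.

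The scalar is supplied by the Euler identity. Since $P$ is homogeneous of degree $k$, the complex analogue of \eqref{hompol1} with $\la = k$, $\ep = 0$, $\al = 0$ (same proof: differentiate the holomorphic Euler relation $z^{j}P_{z^{j}} = kP$ to get $Qz = (k-1)p$, then contract) gives $(k-1)\U(P) = kP\,\Hc(P)$; combined with the adjugate relation $Q^{-1} = \Hc(P)^{-1}U$, so that $p^{T}Q^{-1}p = \U(P)/\Hc(P)$, this yields $p^{T}Q^{-1}p = kP/(k-1)$. Substituting, the rank-one correction contributes the factor $1 - k^{2}/(k-1)^{2} = (1-2k)/(k-1)^{2}$, and assembling the pieces gives $\det\big(\partial^{2}_{w^{a}w^{b}}|P|^{2}\big) = |\kc|^{2}\,\tfrac{1-2k}{(k-1)^{2}}\,|P|^{2(n+1+m)}$. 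Multiplying by $(-4)^{n+1}$ from the first paragraph yields the claimed identity \eqref{macomplex} on the dense open set, and since both sides are real polynomials it holds identically on $\ste_{\rea}$.

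I expect the only genuine obstacle to be the accounting in the first paragraph — keeping straight the powers of $-2\j$ coming from $\det M$ versus from the comparison of $\Phi\wedge\bar\Phi$ with $\bigwedge_{j}dx^{j}\wedge dy^{j}$, and verifying that the leftover sign $(-1)^{n(n+1)/2}$ disappears upon squaring. The remaining steps are routine: the Schur complement and matrix determinant lemma, plus the one already-established homogeneity identity \eqref{hompol1}, do all the work.
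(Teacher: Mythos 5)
Your proposal is correct and follows essentially the same route as the paper's proof: both exploit holomorphy of $P$ to obtain the block structure $\bar{P}\,\nabla^{(1,0)}\pr P$, $P\,\nabla^{(0,1)}\bpr\bar{P}$ plus rank-one cross terms $\pr P\tensor\bpr\bar{P}$, and both dispose of the cross terms via the Euler identity (your $p^{T}Q^{-1}p = kP/(k-1)$ is exactly the paper's $(\nabla dP)(\eul,\eul) = k(k-1)P$ in disguise). The only difference is presentational — the paper adapts the basis to $\eul$ and $\ker dP$ so that the matrix becomes a $2\times2$ Euler block times two $n\times n$ blocks, whereas you keep general Wirtinger coordinates and use the Schur complement and matrix determinant lemma — and your bookkeeping of the factor $(-4)^{n+1}$ and the normalization of $\Psi$ is correct.
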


\begin{proof}
Let $\nabla$ be the standard flat holomorphic affine connection on $\ste$ preserving $\Phi$. Let $\eul$ be the position vector in $\ste_{\rea}$, so that $dP(\eul) = \pr P(\eul^{(1, 0)}) = kP$. Let $X_{1}, \dots, X_{n}$ be vectors in $\ste_{\rea}$ that with $\eul$ span $\ste$ over $\com$ and satisfy $\Phi(\eul^{(1, 0)}, X_{1}^{(1, 0)}, \dots, X_{n}^{(1, 0)}) = 1$, and write $X_{0} = \eul$.
Since $P$ is holomorphic, $\nabla dP = \nabla \pr P$ has type $(2, 0)$. By definition $\Hc(P)$ is the determinant of the matrix with entries $(\nabla \pr P)(X_{i}^{(1, 0)}, X_{j}^{(1, 0)}) = (\nabla \pr P)(X_{i}, X_{j})$, $0 \leq i, j \leq n$. Hence $\Hc(P)$ equals the product of $k(k-1)P = (\nabla dP)(\eul, \eul)$ with $\det (\nabla \pr P)(X_{I}, X_{J})$, where $1 \leq I, J \leq n$. That is
\begin{equation}\label{cs1}
\det (\nabla \pr P)(X_{I}, X_{J}) = \Hc(P)/(\nabla dP)(\eul, \eul) = \tfrac{\kc}{k(k-1)}P^{m-1}.
\end{equation}
By definition $\Psi(X_{0}^{(1,0)}, \dots, X_{n}^{(1,0)}, X_{0}^{(0,1)}, \dots, X_{n}^{(0, 1)}) = (-2\j)^{-n-1}$, so $(-4)^{-n-1}\H^{\rea}(|P|^{2})$ is the determinant of the matrix with four blocks containing entries of the forms $(\nabla d|P|^{2})(X_{i}^{(1,0)}, X_{j}^{(1, 0)})$,  $(\nabla d|P|^{2})(X_{i}^{(1,0)}, X_{j}^{(0, 1)})$, $(\nabla d|P|^{2})(X_{i}^{(0,1)}, X_{j}^{(1, 0)})$, and  $(\nabla d|P|^{2})(X_{i}^{(0,1)}, X_{j}^{(0, 1)})$, where $0 \leq i, j \leq n$. Because $\bpr P = 0$, $\nabla d|P|^{2} = \bar{P}\nabla^{(1,0)}\pr P + P \nabla^{(0, 1)}\bpr \bar{P} + \pr P \tensor \bpr \bar{P} + \bpr \bar{P} \tensor \pr P$. It can be supposed that $X_{1}^{(1, 0)}, \dots, X_{n}^{(1, 0)}$ span the complex kernel of $dP$. Since $d|P|^{2}(\eul) = 2k|P|^{2}$ and $d|P|^{2}(J\eul) = 0$, 
\begin{align}\label{cs2}
\begin{split}
&(\nabla d |P|^{2})(\eul^{(1, 0)}, \eul^{(1, 0)}) = \bar{P}(\nabla dP)(\eul, \eul) = k(k-1)|P|^{2} = (\nabla d |P|^{2})(\eul^{(0, 1)}, \eul^{(0, 1)}) ,\\
&(\nabla d|P|^{2})(\eul^{(1, 0)}, \eul^{(0, 1)}) = (\nabla d|P|^{2})(\eul^{(0, 1)}, \eul^{(1, 0)}) = k^{2}|P|^{2},\\
&(\nabla d|P|^{2})(X_{I}^{(1, 0)}, X_{J}^{(0, 1)}) = dP(X_{I})d\bar{P}(X_{J}) = 0.
\end{split}
\end{align}
Calculating using \eqref{cs1} and \eqref{cs2} yields
\begin{equation}
\begin{split}
(-4&)^{-n-1}\H^{\rea}(|P|^{2})  = k^{2}(1-2k)|P|^{4}\det (\nabla d |P|^{2})(X_{I}^{(1,0)}, X_{J}^{(1, 0)}) \det (\nabla d |P|^{2})(X_{I}^{(0, 1)}, X_{J}^{(0, 1)})\\
& = k^{2}(1-2k)|P|^{2(n+2)}|\det (\nabla dP)(X_{I}, X_{J})|^{2}  = |\kc|^{2}\tfrac{1-2k}{(k-1)^{2}}|P|^{2(n+1 + m)}.
\qedhere
\end{split}
\end{equation}
\end{proof}

\begin{theorem}\label{pv2theorem}
Let $\pv$ be a regular complex prehomogeneous vector space such that the codimension one part of its singular set is an irreducible hypersurface in $\ste$. Then:
\begin{enumerate}
\item\label{pvv1} Given a real form of $\pv$, the restriction $P$ to the real points $\ste_{\rea}$ of $\ste$ of an appropriate scaling of the relative invariant of $\pv$ is an irreducible homogeneous real polynomial solving \eqref{ma}. A connected component of a nonzero level set of such a $P$ is a homogeneous affine sphere.
\item\label{pvv2} The real polynomial $|P|^{2} = P\bar{P}$ solves \eqref{ma} on $\ste$ viewed as a real vector space. A connected component of a nonzero level set of $|P|^{2}$ is a homogeneous affine sphere.
\end{enumerate}
\end{theorem}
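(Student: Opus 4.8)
The plan is to reduce both assertions to Theorem \ref{matheorem} (which turns a homogeneous polynomial solving \eqref{ma} into proper affine spheres) once the right real polynomial has been produced, using the structure theory recalled in \S\ref{pvtheoremsection} and, for the second part, Lemma \ref{complexsimplelemma}. Write $N=\dim_{\com}\ste$ and let $k=\deg P$ for the fundamental relative invariant $P$. First I would record the consequences of the hypothesis: since the codimension one part of $\sing$ is an irreducible hypersurface, $P$ is the unique (up to a nonzero scalar) basic relative invariant, hence an irreducible polynomial, and every relative invariant is a scalar times a power of $P$; since $\pv$ is regular, $\H^{\com}(P)$ is a relative invariant not vanishing on the open orbit, so $\H^{\com}(P)=\kc P^{m}$ with $\kc\in\comt$ and $m$ a nonnegative integer, and comparing degrees forces $m=N(k-2)/k$, which is precisely the exponent in \eqref{ma} with $n+1=N$.

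For part \ref{pvv1}, fix a real form with intertwined antiholomorphic involutions $\sigma$ on $G$ and $\tau$ on $\ste$. The function $z\mapsto\overline{P(\tau z)}$ is again a relative invariant (for the character $g\mapsto\overline{\chi(\sigma g)}$, where $\chi$ is the character of $P$), so by uniqueness it equals $\lambda P$ for some $\lambda\in\comt$; iterating shows $|\lambda|=1$, so after multiplying $P$ by a suitable unit constant (the ``appropriate scaling'') we may assume $\overline{P\circ\tau}=P$, i.e.\ $P$ restricts to a polynomial $P_{\rea}$ on $\ste_{\rea}$ with real coefficients in $\tau$-adapted affine coordinates. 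In such coordinates the matrix of complex second partials of $P$ is real along $\ste_{\rea}$ and equals the real Hessian of $P_{\rea}$, so $\H^{\rea}(P_{\rea})$ is the restriction of $\H^{\com}(P)$; thus $\H^{\rea}(P_{\rea})=\kc P_{\rea}^{m}$, with $\kc$ necessarily real (ratio of real polynomials) and nonzero (by regularity), so $P_{\rea}\in\pol^{k}(\ste_{\rea})$ solves \eqref{ma}. Irreducibility of $P_{\rea}$ over $\rea$ is immediate, since a real factorization would be a complex factorization of the irreducible $P$. Theorem \ref{matheorem} then gives that each connected component of a nonzero level set of $P_{\rea}$ is a proper affine sphere. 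For homogeneity I would use that $\rho$ realizes $\Grea^{+}$ as linear transformations of $\ste_{\rea}$ with $P_{\rea}(\rho(g)x)=\chi(g)P_{\rea}(x)$, so $H=\{g\in\Grea^{+}:\chi(g)=1\}$ preserves every level set; since $G$ is transitive on $\ste\setminus\sing$, for $r\neq0$ the complex level set $\{z\in\ste\setminus\sing:P(z)=r\}$ is a single $\ker\chi$-orbit, hence its real locus is a finite union of $H$-orbits, each open in $\{P_{\rea}=r\}$, so a connected component of $\{P_{\rea}=r\}$ meeting the open real orbit coincides with such an orbit and is homogeneous.

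For part \ref{pvv2}, regard $\ste$ as a real vector space of dimension $2N$ and apply Lemma \ref{complexsimplelemma} to $P$ (using $\H^{\com}(P)=\kc P^{m}$): it yields $\H^{\rea}(|P|^{2})=(-4)^{N}|\kc|^{2}\tfrac{1-2k}{(k-1)^{2}}|P|^{2(N+m)}$, with nonzero constant, and since $|P|^{2(N+m)}=(|P|^{2})^{N+m}$ and $N+m=(2N)(2k-2)/(2k)$, this is exactly \eqref{ma} for the degree $2k$ polynomial $|P|^{2}\in\pol^{2k}(\ste)$ in $2N$ variables. Theorem \ref{matheorem} again gives that each connected component of a nonzero level set of $|P|^{2}$ is a proper affine sphere. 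Homogeneity follows as in part \ref{pvv1}: $|P(\rho(g)z)|^{2}=|\chi(g)|^{2}|P(z)|^{2}$, so the real Lie subgroup $G_{1}=\{g\in G:|\chi(g)|=1\}$ preserves every level set of $|P|^{2}$, and transitivity of $G$ on $\ste\setminus\sing$ shows $\{z\in\ste\setminus\sing:|P(z)|^{2}=r\}$ is a single $G_{1}$-orbit for $r>0$, so each connected component of $\{|P|^{2}=r\}$ meeting the open orbit is an orbit of the identity component of $G_{1}$. Finally one notes that part \ref{pv2} of Theorem \ref{pvtheorem} is the case of \ref{pvv2} in which $\rho$ is irreducible, and part \ref{pv1} the case of \ref{pvv1} in which $\pv$ is irreducible.

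The degree and character bookkeeping, and the fact that the complex Hessian determinant restricts to the real one, are routine. The one genuinely delicate point is the homogeneity claim: it rests on the standard fact that the real locus of a single complex orbit of a real-form action is a finite union of open orbits of the real form, so that the level sets of $P$ (respectively $|P|^{2}$) on that locus are finite unions of orbits of the character-isotropy subgroup. I expect most of the care to be needed there, together with the bookkeeping of connected components of a level set that might meet the lower-dimensional strata of $\sing$.
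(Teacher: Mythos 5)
Your proposal is correct and follows essentially the same route as the paper: the complex identity $\Hc(P)=\kc P^{m}$ forced by regularity and uniqueness of the basic relative invariant, restriction to the real form after normalizing $P$ to be real (the paper cites Lemma $1.1$ of Sato--Shintani where you rederive this from uniqueness of the relative invariant, a harmless substitution), the observation that $\H^{\com}$ restricts to $\H^{\rea}$ along $\ste_{\rea}$, Theorem \ref{matheorem} for the affine sphere conclusion, Lemma \ref{complexsimplelemma} for $|P|^{2}$, and transitivity of the character-kernel subgroup on level sets for homogeneity. The paper phrases the homogeneity step via $G_{\rea}^{0}\cap SL(n+1,\rea)$ acting on the connected components of $\ste_{\rea}\setminus\{P=0\}$ rather than your $\ker\chi$/finite-union-of-real-orbits formulation, but the substance is the same.
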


\begin{proof}
The assumption means that $\pv$ has a fundamental relatively invariant polynomial $P$ that solves $\H(P) = \kc P^{m}$ for some $\kc \neq 0$, where $m = (n+1)(k-2)/k$ and $n+1 = \dim \ste$. By Lemma $1.1$ of \cite{Sato-Shintani}, the restriction to the real points $\ste_{\rea}$ of $\ste$ of $P$ is a complex multiple of a polynomial with coefficients in $\rea$, and the corresponding character $\chi$ is defined over $\rea$. Normalizing $P$ to have real coefficients, it is straightforward to see that $\H(P)$ has real coefficients as well, so in the equality $\H(P) = \kc P^{m}$, $\kc$ can be taken to be real. Here $\H(P)$ means the operator $\Hc(P)$ over $\com$. However, since $P$ is holomorphic, $\nabla dP = \nabla^{(1,0)}\pr P$, so if $X_{1}, \dots, X_{n+1}$ ($n+1 = \dim \ste$) is a basis of $\ste_{\rea}$, then $(\nabla dP)(X_{i}, X_{j}) = (\nabla^{(1, 0)}\pr P)(X_{i}, X_{j})$, and so $\H^{\rea}(P) = \H^{\com}(P)$. Hence the restriction to $\ste_{\rea}$ of $P$ solves \eqref{ma}, and so, by Theorem \ref{matheorem}, the connected components of the nonzero level sets of $P$ are affine spheres. By construction the connected component $G_{\rea}^{0}$ of the identity of the real group $G_{\rea}$ acts transitively on each connected component of the complement $\ste_{\rea} \setminus \{x \in \ste_{\rea}: P(x) = 0\}$, each of which is an open cone, and so its unimodular subgroup $SG_{\rea}^{0} = G_{\rea}^{0}\cap SL(n+1, \rea)$ acts transitively on each connected component of a level set of $P$, showing that these level sets are homogeneous. This shows \eqref{pvv1}. By Lemma \ref{complexsimplelemma} the degree $2k$ homogeneous real polynomial $|P|^{2}$ solves \eqref{macomplex}, and so, by Theorem \ref{matheorem}, the connected components of its level sets are affine spheres. The subgroup $SG = \{g \in G: |\chi(g)| = 1\}$, where $\chi$ is the character associated with $P$, acts transitively on each connected component of a level set of $|P|^{2}$. This shows \eqref{pvv2}.
\end{proof}

\begin{proof}[Proof of Theorem \ref{pvtheorem}]
By Proposition $4.12$ of \cite{Sato-Kimura} an irreducible complex prehomogeneous vector space admits (up to multiplication by a nonzero scalar) at most one irreducible relative invariant. Consequently, a regular irreducible complex prehomogeneous vector space has a unique (up to scalars) irreducible relative invariant, and so Theorem \ref{pvtheorem} follows from Theorem \ref{pv2theorem}.
\end{proof}

Different real forms of $\pv$ can give rise to affinely inequivalent $P$. For an example, let $\ste = \com^{3}$ and let $G = GL(1, \com) \times GL(1, \com) \times GL(1, \com)$, acting on $\com^{3}$ by scaling in each coordinate. Then $\sing = \{z \in \com^{3}: P(z) = 0\}$ where $P(z) = z_{1}z_{2}z_{3}$. Note that $\Hc(P) = 2P$. The usual complex conjugation gives a real structure for which the resulting real polynomial is $P(x) = x_{1}x_{2}x_{3}$. Another real structure is given by $\si(z_{1}, z_{2}, z_{3}) = (\bar{z}_{1}, \bar{z}_{3}, \bar{z}_{2})$ and $\si(\rho(g)z) = \rho(\si(g))\si(z)$. The real points $\ste_{\rea}$ have the form $(x_{1}, x_{2} + \j x_{3}, x_{2} - \j x_{3})$ for $x \in \rea^{3}$, and the restriction to $\ste_{\rea}$ of $P$ is $x_{1}(x_{2}^{2} + x_{3}^{2})$, which is not (real) affinely equivalent to $x_{1}x_{2}x_{3}$. It can be shown that any solution $P \in \pol^{3}(\rea^{3})$ of $\H(P) = \kc P$ with $\kc \neq 0$ is affinely equivalent to $x_{1}x_{2}x_{3}$ or $x_{1}(x_{2}^{2} + x_{3}^{2})$, according to the sign of $\kc$. Note that, in conjunction with Lemma \ref{complexsimplelemma}, $P(z) = z_{1}z_{2}z_{3}$ yields a third solution of \eqref{ma}. Namely, the polynomial $|P|^{2} = (x_{1}^{2} + y_{1}^{2})(x_{2}^{2} + y_{2}^{2})(x_{3}^{2} + y_{3}^{2})$ solves $\H(|P|^{2}) = 320|P|^{8}$.

There need not be a real form $\pvr$ of the prehomogeneous vector space $\pv$ corresponding to a real form $\Grea$ of $G$, because it need not be the case that there be a real structure on $\ste$ compatible with that on $G$. Let $d\rho:\g \to \ste$ be the induced representation of the Lie algebra $\g$ of $G$. By a theorem of E. Cartan (Theorem $1.1$ of \cite{Sato-Kimura}) the image $d\rho(\g) \subset \gl(\ste)$ is reductive with one-dimensional center. Let $\Grea$ be a real form of $G$ and let $\g_{\rea}$ be its Lie algebra. Then $d\rho(\g_{\rea})$ is a real form of $d\rho(\Grea)$ and its semisimple part $\s$ is a real form of the semisimple part of $d\rho(\g)$. If $\s$ is a split real form there is always a corresponding real form of $\pv$. On the other hand, in the case that $\s$ is compact, there need not be a corresponding real form of $\pv$. If there is a real structure $\ster$ on $\ste$ such that $\pvr$ is a real form of $\pv$, then $\rho(\Grea)$ has one-dimensional center and its semisimple part $S$ is compact. The restriction to $\ster$ of the relative invariant $P$ of $\pv$ is invariant along the orbits of $S$, which foliate an open subset of $\ster$. Since $S$ is compact it preserves a positive definite inner product and so these orbits lie in spheres. In general this is incompatible with the relative invariance of the restriction of $P$ to $\ster$ unless $P$ has degree $2$. For a concrete example, consider the prehomogeneous action $\rho$ of $GL(2, \com)$ on $\ste = S^{3}(\com^{2})$ induced by the standard representation. The relative invariant $P$ is given by \eqref{pvexample} (viewed as a complex polynomial). An element $z_{1}u^{3} + z_{2}u^{2}v + z_{3}uv^{2} + z_{4}v^{3}$ of $S^{3}(\com^{2})$ is identified with the vector $z = (z_{1}, z_{2}, z_{3}, z_{4})^{t}$. The real structure on $\com^{2}$ defined by $\tau z = \bar{z}$ induces the real structures on $S^{3}(\com^{2})$ and $GL(2, \com)$ given by complex conjugation of the coefficients and the corresponding (split) real form is that of example \ref{cubicsplit} in the list below. The complexification of the quaternions $\quat$ is isomorphic to the endomorphism algebra of $\com^{2}$ and so the group $GL(1, \quat)$ of invertible quaternions is a different real form of $GL(2, \com)$. An identification of $\quat$ with $\com^{2}$ induces an identification of the simple part of $GL(1, \quat)$ with a copy of $SU(2)$ embedded in $GL(2, \com)$. Via $\rho$, $GL(1, \quat)$ acts on $\ste$. Suppose $\ster$ were a real form of $\ste$ so that $(GL(1, \quat), \ster)$ were a real form of $(GL(2, \com), \ste)$. Then $P$ would be invariant under the induced action of $SU(2)$ on $\ster$. Since $SU(2)$ leaves invariant a quadratic form $E$ on $\ster$, if $x \in \ster$ is a generic point for the action then there is a constant such that $P(x) = cE^{2}(x)$. By homogeneity, $P - cE^{2}$ vanishes on an open subset of $\ster$, so is identically zero. This contradicts the irreducibility of $P$.

\subsection{}\label{pvexamplesection}
If $1 \leq n < \dim \ste = m$ then the tensor product of $\pv$ with the standard representation of $GL(n, \com)$ is prehomogeneous if and only if the tensor product of the contragredient triple $(G, \rho^{\ast}, \ste^{\ast})$ with the standard representation of $GL(m - n , \com)$ is prehomogeneous (see section $2$ of \cite{Sato-Kimura} and section $7.1$ of \cite{Kimura}). This is because the Grassmannian $Gr(n, \ste)$ of $n$-dimensional subspaces of $\ste$ is isomorphic to the Grassmannian $Gr(m-n, \ste^{\ast})$, and the conditions are respectively equivalent to $G$ having a Zariski open orbit in $Gr(n, \ste)$ or $Gr(m-n, \ste^{\ast})$. Two prehomogeneous vector spaces are \textit{castling} transforms of one another when there is a triple $\pv$ such that one is isomorphic to the tensor product of $\pv$ with the standard representation of $GL(n, \com)$ and the other is isomorphic to the tensor product of the contragredient triple $(G, \rho^{\ast}, \ste^{\ast})$ with the standard representation of $GL(m - n , \com)$. A prehomogeneous vector space is \textit{reduced} if its dimension is minimal among those prehomogeneous vector spaces in its castling class, and Proposition $2.12$ of \cite{Sato-Kimura} shows that each castling class contains a reduced triplet, unique up to equivalence of prehomogeneous vector spaces. Two prehomogeneous vector spaces $\pv$ and $(G^{\prime}, \rho^{\prime}, \ste^{\prime})$ are said to be \textit{equivalent} if there is a linear isomorphism $\tau:\ste \to \ste^{\prime}$ and a rational group isomorphism $\si:\rho(G) \to \rho^{\prime}(G^{\prime})$ such that $\tau \circ \rho(g) = (\si\rho(g))\circ \tau$ for all $g \in G$. The essential point here is that the images $\rho(G)$ and $\rho^{\prime}(G^{\prime})$ are locally the same. The basic example is that $(SL(2, \com)\times SL(2, \com), \rho\times \rho, \com^{2}\tensor \com^{2})$ and $(SO(4, \com), \rho, \com^{4})$ are equivalent, where in both cases $\rho$ indicates the standard representation, because $SL(2, \com)\times SL(2, \com)$ and $SO(4, \com)$ are locally isomorphic, although not isomorphic.

Sato and Kimura classified the reduced irreducible complex prehomogeneous vector spaces up to equivalence. There are $29$ that are regular, a unique one that is not regular though admitting a relative invariant, and several that admit no relative invariants (see section $7$ of \cite{Sato-Kimura} or the appendix to \cite{Kimura}). Interesting examples of solutions of \eqref{ma} are obtained via Theorem \ref{pvtheorem} from the real forms of the regular spaces appearing in the classification. The nonregular cases are not of interest here, because they do not yield solutions to \eqref{ma}. 

In order to summarize the Sato-Kimura classification conceptually, two special classes of prehomogeneous vector spaces are described now. A prehomogeneous vector space is said to be \textit{trivial} if it has the form $(G \times GL(m, \com), \rho \tensor \om, \com^{n} \tensor \com^{m})$, where $\om$ is the standard representation of $GL(m, \com)$, and $\rho$ is an $n$-dimensional representation of the semisimple algebraic group $G$. As is commented after Definition $3.5$ in \cite{Sato-Kimura}, a trivial prehomogeneous vector space is reduced unless $G = SL(n, \com)$ with $m > n > m/2$, in which case the castling transform is reduced. By Proposition $5.1$ of \cite{Sato-Kimura}, a trivial prehomogeneous vector space is regular if and only if $m = n$.

Let $G$ be a connected, simply-connected simple complex Lie group with $\integer$-graded Lie algebra $\g = \oplus_{k}\g_{k}$. The connected subgroup $G_{0}$ of $G$ with Lie algebra $\g_{0}$ is reductive and gradation preserving. By \cite{Vinberg-weylgroup}, the action of $G_{0}$ on $\g_{1}$ induced from the adjoint action of $G$ on $\g$ has only finitely many orbits, so is prehomogeneous. A prehomogeneous vector space equivalent to one of the form $(G_{0}, \ad_{G}, \g_{1})$ is said to be \textit{parabolic} and to have the \textit{type} which $G$ has in the classification of simple Lie algebras. This notion is due to H. Rubenthaler in \cite{Rubenthaler-espaces}, where the irreducible regular prehomogeneous vector spaces of parabolic type are classified.

Implicit in Rubenthaler's \cite{Rubenthaler-espaces} and \cite{Rubenthaler-nonparabolic} is the reformulation of the Sato-Kimura classification as stating that a reduced regular irreducible prehomogeneous complex vector space for a complex linear algebraic group $G$ is equivalent to one of one of the following types:
\begin{enumerate}
\item\label{rube1} A trivial prehomogeneous vector space $(G \times GL(n, \com), \rho \tensor \om, \com^{n} \tensor \com^{n})$, where $\rho$ is an $n$-dimensional irreducible representation of the semisimple algebraic group $G\neq SL(n, \com)$.
\item\label{rube2} A prehomogeneous vector space of parabolic type.
\item\label{rube3} The prehomogeneous vector space obtained by restricting a parabolic prehomogeneous vector space $(G_{0}, \ad_{G}, \g_{1})$ to some subgroup $H \subset G_{0}$.
\end{enumerate}
The $G = SL(n, \com)$ case is excluded from \eqref{rube1} because this case is parabolic of type $A_{2n+1}$, so is included in \eqref{rube2}. The spaces of type \eqref{rube3} are six in number, and arise from inclusions as subgroups of larger orthogonal groups of various Spin groups and the group $G_{2}$; see Table $1$ and Remark $8.2$ in \cite{Rubenthaler-nonparabolic}. This version of the classification is obtained \textit{a posteriori}, by comparing the Sato-Kimura classification with that of the parabolic prehomogeneous spaces. 

While listing all the possible real forms is straightforward in principle, since there are in general several real forms corresponding to each of the $29$ entries in the Sato-Kimura table, it did not seem useful to do so here. 
Instead an incomplete list is given, with the intention of illustrating the possibilities and highlighting some interesting examples. The explicit forms of the representation $\rho$ and the character $\chi$ are omitted in some cases, for which they should be clear, or for which their definitions would occupy too much space. The number in parentheses at the end of each entry in the list indicates the corresponding entry in table I of section $7$ of \cite{Sato-Kimura}. When the prehomogeneous vector space is a real form of a prehomogeneous vector space of parabolic type, this is indicated.

\begin{list}{\arabic{enumi}.}{\usecounter{enumi}}
\renewcommand{\theenumi}{Step \arabic{enumi}}
\renewcommand{\theenumi}{(\arabic{enumi})}
\renewcommand{\labelenumi}{\textbf{Level \theenumi}.-}
\item\label{matrixex} A real form of a regular trivial prehomogeneous vector space. $\Grea = G \times GL(n, \rea)$ where $G$ is a real form of a connected semisimple algebraic Lie group with irreducible action $\si$ on $\com^{n}$, $\ster = \rea^{n} \tensor (\rea^{n})^{\ast}$, 
$\rho(g_{1}, g_{2})X = \si(g_{1})Xg_{2}^{t}$, $\chi(g_{1}, g_{2}) = \det(\si(g_{1}))\det(g_{2})= \det(g_{2})$, and $P(X) = \det X$. When $G = SL(n, \rea)$ this is parabolic of type $A_{2n+1}$. $(1)$. 
\item\label{hermitianex} $\Grea = GL(1, \rea) \times SL(n, \com)$, $\ster = \{X \in \com^{n} \tensor (\com^{n})^{\ast}: \bar{X}^{t} = X\}$, 
$\rho(r, g)X = rgX\bar{g}^{t}$, $\chi(r, g) = r^{2}|\det g|^{2}$, and $P(X) = \det X$. Parabolic of type $A_{2n+1}$. $(1)$.
\item\label{symex} $\Grea = GL(n, \rea)$, $\ster = \{X \in \rea^{n}\tensor (\rea^{n})^{\ast}: X^{t} = X\}$, $\rho(g)X =gxg^{t}$, $\chi = \det^{2}(g)$, and $P(X) = \det X$. Parabolic of type $C_{n}$. $(2)$.
\item $\Grea = GL(2n, \rea)$, $\ster = \{X \in \rea^{2n}\tensor \rea^{2n\, \ast}: X^{t} = -X\}$, $\rho(g)X = gXg^{t}$, $\chi(g) = \det g$, and $P(X) = \pfaff X$ is the Pfaffian of $X$. Parabolic of type $D_{2n}$. $(3)$.
\item\label{qhermex} $\Grea = GL(1, \rea) \times SL(n, \quat)$, $n > 1$, $\ster$ comprises the $n \times n$ quaternionic Hermitian matrices, $\chi = |\det|^{2}$, and $P(X) = \det X$. Parabolic of type $D_{2n}$. $(3)$.
\item\label{cubicsplit} $\Grea = GL(2, \rea)$, $\ster = S^{3}(\rea^{2})$, $\chi = \det^{6}$, the action of $\Grea$ is that induced from the standard representation, $P$ is the discriminant \eqref{pvexample} of a cubic form. Parabolic of type $G_{2}$. $(4)$.
\item $\Grea = GL(m, \rea)$ and $\ster = \ext^{3}(\rea^{m})$, where $m = 6, 7, 8$. Parabolics of types $E_{6}$, $E_{7}$, and $E_{8}$. $(5)$, $(6)$, and $(7)$.
\item\label{233ex} $\Grea = SL(3, \rea) \times SL(3, \rea) \times GL(2, \rea)$ acting on $\rea^{3}\tensor \rea^{3}\tensor \rea^{2}$ via the outer tensor product of the standard representations, $\chi = \det^{12}$, $P$ is the hyperdeterminant of format $(2, 3, 3)$ described below. Parabolic of type $E_{6}$. $(12)$. 
\item $\Grea = Sp(n, \rea) \times GL(2m, \rea)$, $2 \leq 2m \leq n$,  $\ster = \rea^{2m}\tensor \rea^{2n\,\ast}$, $\rho(g_{1}, g_{2}) = g_{1}Xg_{2}^{t}$, $\chi = \det \tensor \det$, and $P(X) = \pfaff (X^{t}JX)$ where $J$ is an almost complex structure on $\rea^{2n}$ and $Sp(n, \rea) =\{A \in \eno(\rea^{2n}): A^{t}JA = J\}$. Parabolic of type $C_{n+m}$. $(13)$.
\item\label{soex} $\Grea = SO(p, n-p)\times GL(m, \rea)$, $n \geq 2m \geq 2$, $\ster = \rea^{m}\tensor \rea^{n\,\ast}$, $\rho(g_{1}, g_{2}) = g_{1}Xg_{2}^{t}$, $\chi = \det \tensor \det^{2}$, and $P(X) = \det X^{t}E_{p}X$ where $SO(p, n-p) = \{A \in \eno(\rea^{n}): A^{t}E_{p}A = E_{p}\}$. Parabolic of type $B_{(n-1)/2}$ or $D_{n/2}$. $(15)$.
\item\label{e6ex} $\Grea = GL(1, \rea) \times E_{6(-26)}$, $\ster$ is the $27$-dimensional real exceptional Jordan algebra of $3 \times 3$ Hermitian matrices over the octonions, with $GL(1, \rea)$ acting by scalar multiplication, and $P$ is the invariant cubic form on $\ster$ given by the determinant. 
Parabolic of type $E_{7}$. $(27)$.
\item\label{e6ex2} $\Grea = GL(1, \rea) \times E_{6(6)}$, $\ster$ is the $27$-dimensional space of $3 \times 3$ Hermitian matrices over the split octonions, with $GL(1, \rea)$ acting by scalar multiplication, and $P$ is the invariant cubic form on $\ster$ given by the determinant. Parabolic of type $E_{7}$. $(27)$.
\item\label{e7ex} $\Grea = GL(1, \rea) \times E_{7(7)}$, $\ster$ is the $56$-dimensional irreducible representation of the split real form $E_{7(7)}$, with $GL(1, \rea)$ acting by scalar multiplication, and $P$ is the invariant quartic form.  Parabolic of type $E_{8}$. $(29)$.
\end{list}
The polynomial associated with the $n = 3$ cases of \ref{hermitianex} and \ref{qhermex}, as well as \ref{e6ex}, has the form
\begin{equation}\label{cartpoly}
x_{1}x_{2}x_{3} - x_{1}|z_{2}|^{2} - x_{2}|z_{3}|^{2} - x_{3}|z_{1}|^{2} + \bar{z}_{1}(\bar{z}_{2}z_{3}) + (\bar{z}_{3}z_{2})z_{1},
\end{equation}
in which the $x_{i}$ are real and the $z_{i}$ are complex numbers, quaternions, or octonions, respectively. Examples \ref{matrixex} (with $G = SL(n, \rea)$) and \ref{hermitianex} are real forms of the complex prehomogeneous vector space $(SL(n, \com)\times GL(n, \com), \com^{n}\tensor \com^{n\,\ast}, \rho(g_{1}, g_{2})X = g_{1}Xg_{2}^{t})$. Their $n = 3$ cases yield respectively $x_{1}x_{5}x_{9} - x_{1}x_{6}x_{8} - x_{2}x_{4}x_{9} + x_{3}x_{4}x_{8} + x_{2}x_{6}x_{7} - x_{3}x_{5}x_{7}$
and the polynomial \eqref{cartpoly} (with $z_{1} = x_{4}+ \j x_{5}$, etc.), which solve \eqref{ma} with $m = 3$ and $\kc = -2$ and $\kc = 128$, respectively. Incidentally, this shows that these polynomials are not affinely equivalent, for were they, by \eqref{hgf} they would solve \eqref{ma} with $\kc$ of the same sign. 

\subsection{}\label{hyperdeterminantsection}
Interesting examples of affine spheres are obtained as the nonzero level sets of certain hyperdeterminants, e.g. \eqref{chdet}. While these examples arise as relative invariants of prehomogeneous vector spaces, they merit separate mention because they can be found by a different line of thought which is also suggestive. Let $\ste(1), \dots, \ste(r)$ be complex vector spaces of dimensions $k_{1} + 1, \dots, k_{r} + 1$. The \textit{hyperdeterminant of format} $(k_{1}+1, \dots, k_{r}+1)$ is by definition a polynomial defining the subvariety of $\proj((\ste(1)\tensor \dots \ste(r))^{\ast})$ dual to the image $\proj(\ste(1))\times \dots \times \proj(\ste(r))$ under the Segre embedding into $\proj(\ste(1)\tensor \dots \tensor \ste(r))$. The hyperdeterminant is determined up to sign by the requirement that its coefficients be integers and that it be irreducible over $\integer$. It is nontrivial exactly when the variety dual to the Segre variety is a hypersurface, which holds if and only if $k_{j} \leq \sum_{i \neq j}^{r}k_{i}$ for all $1 \leq j \leq r$ (see Corollary $5.10$ of chapter $1$ and Theorem $1.3$ of chapter $14$ of \cite{Gelfand-Kapranov-Zelevinsky}). Write $\hdet(A)$ for the hyperdeterminant of $A \in \ste(1)^{\ast}\tensor \dots \tensor \ste(r)^{\ast}$. 

\begin{corollary}\label{hyperdeterminantcorollary}
Let $k_{1}, \dots, k_{r}$ be positive integers satisfying $k_{j} \leq \sum_{i \neq j}^{r}k_{i}$ for $1 \leq j \leq r$, let $\ste(1), \dots, \ste(r)$ be complex vector spaces of dimensions $k_{1} + 1, \dots, k_{r} + 1$, and write $\stw = \ste(1)^{\ast} \tensor \dots \tensor \ste(r)^{\ast}$. Let $G = GL(\ste(1))\times \dots \times GL(\ste(r))$ and let $\rho$ be the irreducible representation of $G$ on $\stw$ arising as the contragredient of the outer product of the standard representations of the factors $GL(\ste(j))$. If $(G, \rho, \stw)$ is prehomogeneous, then it is regular with fundamental relative invariant given by the hyperdeterminant $\hdet$, and so the Hessian determinant of $\hdet$ is equal to a nonzero scalar multiple of a power of $\hdet$. Consequently there hold the following.
\begin{enumerate}
\item\label{hdp1} A nonzero level set of the hyperdeterminant $\hdet$ in the set $\stw_{\rea}$ of real points of any real form of $(G, \rho, \stw)$ is a proper homogeneous affine sphere. 
\item A nonzero level set in $\stw$ of the polynomial $|\hdet|^{2}$ is a proper homogeneous affine sphere.
\end{enumerate}
\end{corollary}

\begin{proof}
The assumption on the format guarantees that $\hdet$ is nontrivial. By construction, $\hdet$ is a relative invariant for the irreducible representation $\rho$ of $G$ on $\stw$. If $(G, \rho, \stw)$ is prehomogeneous, then, since it is irreducible, it admits up to scalars at most one irreducible relative invariant. Since the Hessian of a relative invariant is again a relative invariant, it will be the case that the Hessian determinant of $\hdet$ is a nonzero multiple of a power of $\hdet$ provided that the former is nonzero. This is implicit in the proof that the dual variety of the Segre embedding is a hypersurface; see section $5$ of chapter $1$ of \cite{Gelfand-Kapranov-Zelevinsky}, Theorems $5.1$ and $5.3$ in particular; alternatively, by Proposition $4.25$ of \cite{Sato-Kimura} it follows from the reductivity of the stabilizer in $G$ of a generic point of $\stw$. Thus when the format $(k_{1}+1)\times \dots \times (k_{r}+1)$ is such that the hyperdeterminant exists and $(G,\rho, \stw)$ is prehomogeneous, then $(G,\rho, \stw)$ is regular with fundamental relative invariant given by $\hdet$, and therefore $\hdet$ solves an equation of the form \eqref{ma}. The remaining claims follow as in the proof of Theorem \ref{pv2theorem}.
\end{proof}

It makes sense to speak of the product of the real general linear groups acting on the real form of $\ste(1)^{\ast}\tensor \dots \tensor \ste(r)^{\ast}$ under complex conjugation, and the restriction to this real form of the hyperdeterminant. For this real form of $(G, \rho, \stw)$, conclusion \eqref{hdp1} of Corollary \ref{hyperdeterminantcorollary} in conjunction with Theorem \ref{pvtheorem} yields that a connected component of a nonzero level set in $\ste(1)_{\rea}^{\ast}\tensor \dots \tensor \ste(r)_{\rea}^{\ast}$ of the hyperdeterminant, is a homogeneous proper affine sphere.

In general $(G, \rho, \stw)$ need not be prehomogeneous. An obvious necessary condition for there to be an open $G$ orbit in $\stw$ is that $\sum_{i = 1}^{r}(k_{i}+1)^{2} - \prod_{i = 1}^{r}(k_{i}+1) = \dim G - \dim \stw \geq r - 1$. The $r-1$ arises because the dilations in the $r$ factors all induce dilations on $\stw$. 
\begin{lemma}\label{reducedlemma}
Let $k_{1}, \dots, k_{r}$ be positive integers satisfying $k_{j} \leq \sum_{i \neq j}^{r}k_{i}$ for $1 \leq j \leq r$ and suppose that the representation $\rho$ of $G = GL(k_{1}+1, \com)\times \dots \times GL(k_{r}+1, \com)$ on the dual $\stw$ of the outer tensor product of the standard representations of its factors is prehomogeneous. Then the hyperdeterminant is the relative invariant obtained via castling transformations from one of the following polynomials.
\begin{enumerate}
\item The determinant of an $n\times n$ matrix viewed as the fundamental relative invariant of the irreducible reduced trivial prehomogeneous vector space $(SL(n, \com)\times GL(n, \com), \rho, \com^{n}\tensor \com^{n})$.
\item The Cayley hyperdeterminant of format $(2, 2, 2)$ viewed as the fundamental relative invariant of the irreducible reduced prehomogeneous vector space $(SL(2, \com)\times SL(2, \com)\times GL(2,\com), \rho, \com^{2}\times \com^{2}\times \com^{2})$.
\item The hyperdeterminant of format $(3, 3, 2)$ viewed as the fundamental relative invariant of the irreducible reduced prehomogeneous vector space $(SL(3, \com)\times SL(3, \com)\times GL(2,\com), \rho, \com^{2}\times \com^{2}\times \com^{2})$.
\end{enumerate}
\end{lemma}

\begin{proof}
By Theorem $2$ of \cite{Kac-nilpotent} or Theorem $5$ of \cite{Parfenov}, the action of $G$ on $\stw$ has finitely many orbits exactly when the format $(k_{1} + 1, \dots, k_{r} + 1)$ is one of $(a)$, $(a, b)$, $(2, 2, c)$, or $(2, 3, c)$ where $c \geq 3$. That $G$ have finitely many orbits in $\stw$ implies that $(G, \rho, \stw)$ is prehomogeneous. For hyperdeterminants it is interesting only to consider cases with $r \geq 3$. In this case the only formats for which there exists a nontrivial hyperdeterminant and $G$ has finitely many orbits are $(2, 2, 2)$, $(2, 2, 3)$, $(2, 3, 3)$, and $(2, 3, 4)$, and the hypotheses of Corollary \ref{hyperdeterminantcorollary} are satisfied for these hyperdeterminants.

The format $(2, 2, 2)$ gives the Cayley hyperdeterminant \eqref{chdet}. Since $SL(2, \com)\times SL(2, \com)$ is locally isomorphic to $SO(4, \com)$, this hyperdeterminant can also be seen as arising as the relative invariant associated with the action of $SO(4,\com)\times GL(2, \com)$ on $\com^{4}\tensor \com^{2}$; this is a special case of item $(15)$ in the Sato-Kimura list (see \ref{soex} in the list above). Likewise the hyperdeterminant of format $(2, 3, 3)$ corresponds directly to entry $(12)$ in the Sato-Kimura list; see \ref{233ex} in the list above.

If $ab > c$ then the tensor product of format $(a, b, c)$ is castling equivalent to the tensor product of the format $(a, b, c - ab)$. In particular the formats $(2, 3, 4)$, $(2, 2, 3)$, and $(1, 2, 2)$ are identified modulo castling and equivalence. The prehomogeneous vector space $(GL(1, \com)\tensor GL(2, \com)\tensor GL(2, \com), \rho, \stw = \com \tensor \com^{2} \tensor \com^{2})$ is equivalent to $(SL(2, \com)\times GL(2, \com), \rho, \com^{2} \tensor \com^{2})$, which in turn is equivalent to $(GL(1, \com)\tensor SO(4, \com), \om, \com^{4})$, where $\om$ is the standard representation of $SO(4, \com)$ and $GL(1,\com)$ acts by scalar multiplication. By Proposition $5.18$ of \cite{Sato-Kimura} there is a one-to-one correspondence between relative invariants of a prehomogeneous vector space and relative invariants of its castling transform under which irreducible relative invariants correspond to irreducible relative invariants. It follows that the hyperdeterminants of formats $(2, 3, 4)$ and $(2, 2, 3)$ are obtained from each other via castling, and both are obtained via castling from the fundamental invariant of $(SL(2, \com)\times GL(2, \com), \rho, \com^{2} \tensor \com^{2})$, which is simply the determinant of $2\times 2$ matrices.

The correspondence between relative invariants of castling equivalent prehomogeneous vector spaces is given explicitly on pp.$67-68$ of \cite{Sato-Kimura}. If $(G\times GL(n, \com), \rho \tensor \om, \ste(m)\tensor \ste(n))$ and $(G\times GL(m-n, \com), \rho^{\ast}\tensor \om_{n}, \ste(m)^{\ast}\tensor \ste(m-n))$ are castling equivalent, then a relative invariant $P$ of the first is given as a polynomial in the $\binom{m}{n}$ minors of an element of $\ste(m)\tensor \ste(n)$, and the corresponding relative invariant of the second prehomogeneous vector space is given by the same polynomial in the $\binom{m}{m-n}$ minors of $\ste(m)^{\ast}\tensor \ste(m-n)$. For example, the relative invariant of $GL(1, \com)\times GL(2, \com) \times GL(2, \com)$ acting on $\com^{2}\tensor \com^{2}$ is given by the determinant $P(x_{11}, x_{12}, x_{21}, x_{22}) = x_{11}x_{22} - x_{12}x_{21}$. For the castling equivalent space with $GL(2, \com)\times GL(2, \com)\times GL(3, \com)$ acting on $\com^{2}\tensor \com^{2}\tensor \com^{3}$, take as coordinates the array $x_{abi}$ where $a, b \in \{1, 2\}$ and $i \in \{1, 2, 3\}$, and regard these as the entries of a $4 \times 3$ matrix, in which the expression $2a + b -2$ in terms of the first two indices labels the row and the third index labels the column. For $1 \leq \al < \be < \ga \leq 4$ let $X_{\al\be\ga}$ be the $3 \times 3$ minor of this $4\times 3$ matrix corresponding to the rows $\al$, $\be$, and $\ga$. Then the relative invariant $Q$ corresponding to $P$ via castling is $Q(x_{abi})  = X_{123}X_{234} - X_{124}X_{134}$.
Modulo notation, this expression coincides with that given in Example $3.8$ of chapter $14$ of \cite{Gelfand-Kapranov-Zelevinsky} for the hyperdeterminant of format $(2, 2, 3)$. 

The prehomogeneous vector space $(G, \rho, \stw)$ is equivalent to $(\bar{G}, \rho, \stw)$ where $\bar{G} = SL(k_{1}+1, \com)\times \dots \times SL(k_{r-1}+1, \com) \times GL(k_{r}, \com)$, because the images $\rho(G)$ and $\rho(\bar{G})$ are the same. Note that $(\bar{G}, \rho, \stw)$ is a trivial prehomogeneous vector space if and only if $\prod_{i = 1}^{r-1}(k_{i} + 1)\leq k_{r} + 1$. Since $(\bar{G}, \rho, \stw)$ is an irreducible prehomogeneous vector space, it follows from Proposition $7.47$ of \cite{Kimura} that it is reduced if and only if either it is a trivial prehomogeneous vector space, or $r \leq 3$ and, when $r = 3$, $2 \leq k_{1} = k_{2} \leq 3$. In the trivial case, for $(SL(k_{1} +1, \com)\times GL(k_{2} + 1, \com), \rho, \stw)$ to be reduced and regular it must be that $k_{2} = k_{1}$; in this case the relative invariant is simply the determinant. Suppose $(\bar{G}, \rho, \stw)$ is not trivial and is reduced. Then the format is either $(2, 2, 2)$ or $(3, 3, 2)$. Together with Corollary \ref{hyperdeterminantcorollary} this completes the proof. 
\end{proof}

It would be interesting to explain what it means geometrically that affine spheres arise as the level sets of a polynomial that is not reduced, and what is the geometric relation between affine spheres arising as the level sets of polynomials related via castling.

None of the preceding precludes the possibility that the Hessian determinant of a nontrivial hyperdeterminant, such as that of format $(2, 2, 2, 2)$, is a multiple of a power of the hyperdeterminant, although the underlying $G$ action is not prehomogeneous. 

\subsection{}\label{convexconesection}
Some of the examples of affine spheres obtained from relative invariants of prehomogeneous vector spaces were already obtainable from other constructions. The affine spheres given by Theorem \ref{pvtheorem} are necessarily homogeneous. All the locally uniformly convex examples arising this way were obtainable by a construction of Calabi. The \textit{characteristic function} $\phi_{\Om}$ of a proper open convex cone $\Om \subset\rea^{n+1}$ is the positive homogeneity $-n-1$ real analytic function $\phi_{\Om}(x)  = \int_{\Om^{\ast}}e^{-x^{p}y_{p}}dy$, where $\Om^{\ast} =\{y \in \rea^{n+1\,\ast}: x^{p}y_{p} > 0 \,\,\text{for all}\,\, x \in \bar{\Om}\setminus\{0\}\}$.
\begin{theorem}[E. Calabi, \cite{Calabi-completeaffine}; T. Sasaki, \cite{Sasaki}]\label{calabitheorem}
If a closed subgroup $G \subset GL(n+1, \rea)$ acts transitively as automorphisms of a nonempty proper open convex cone $\Om$ then the orbit of any point of $\Om$ under the action of the subgroup $SG = G \cap SL(n+1, \rea)$ is a hyperbolic affine sphere, homogeneous under the action of $SG$, and contained in a level set of the characteristic function of $\Om$.
\end{theorem}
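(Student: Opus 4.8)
The plan is to deduce the statement from Theorem \ref{ahtheorem} applied to the characteristic function $\phi_{\Omega}$. Since the closed subgroup $G$ acts transitively on $\Omega$, the cone $\Omega$ is a homogeneous proper open convex cone and $G \subset \Aut(\Omega)$, so $\phi_{\Omega}$ is defined, positive and real-analytic on $\Omega$. Set $F = \log\phi_{\Omega}$; since $\phi_{\Omega}$ is positively homogeneous of degree $-(n+1)$, $F$ lies in $\amg^{0,0}_{-(n+1)}$ with center the origin. I would first note that $g\cdot F = F + \log|\det\ell(g)|$ differs from $F$ by a constant, so $\H(g\cdot F) = \H(F)$, and hence by \eqref{hgf} one gets $g\cdot\H(F) = (\det\ell(g))^{2}\H(F)$; combined with $g\cdot\phi_{\Omega}^{-2} = (\det\ell(g))^{-2}\phi_{\Omega}^{-2}$, this shows $\phi_{\Omega}^{-2}\H(F)$ is $\Aut(\Omega)$-invariant, hence, because $G$ acts transitively on $\Omega$, equal to a constant $c$. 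Positive-definiteness of $\hess F$ on $\Omega$ (a standard property of the characteristic function of a proper convex cone) forces $c>0$. Thus $F$ solves $\H(F) = c\,e^{2F}$ on $\Omega$, which is precisely condition \eqref{aht2} of Theorem \ref{ahtheorem} with $\la = 0$, $\ep = 0$, $\al = -(n+1)$, $B = c$, and any connected open $I \subset \rea$ (all of which satisfy $\la r + \al = -(n+1)\neq 0$).

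Next I would read off the geometric conclusions. By the equivalences in Theorem \ref{ahtheorem}, every level set $\lc_{r}(\phi_{\Omega},\Omega)$, equipped with the co-orientation consistent with $-\sign(\U(F)\H(F))F^{i}$, is a proper affine sphere with center the origin, and by \eqref{ahmc} its affine mean curvature has the sign of $\al = -(n+1)$, hence is negative. Moreover, since $\hess F$ is positive definite on the convex set $\Omega$, the function $F$ is strictly convex there, so each of its level sets is a locally strictly convex hypersurface. A locally strictly convex affine sphere with negative affine mean curvature is hyperbolic by definition, so each level set of $\phi_{\Omega}$ is a hyperbolic affine sphere. (One may equivalently work with $\phi_{\Omega}$ itself, which solves $\H(\phi_{\Omega}) = c(n+2)\phi_{\Omega}^{n+3}$ by \eqref{hpsif}, since on a level set $\hess\phi_{\Omega} = \phi_{\Omega}\,\hess F$ is positive definite on the tangent space.)

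Finally I would identify the orbits of $SG = G \cap SL(n+1,\rea)$. For $g \in SG$ one has $\det\ell(g) = 1$, so $g\cdot\phi_{\Omega} = \phi_{\Omega}$, i.e. $\phi_{\Omega}$ is constant on each $SG$-orbit, whence $SG\cdot x \subset \lc_{\phi_{\Omega}(x)}(\phi_{\Omega},\Omega)$. To see this inclusion is an equality, observe that $\phi_{\Omega}$ is nonconstant and homogeneous, so $SG$ cannot act transitively on $\Omega$; as $SG$ is a closed, hence Lie, subgroup of $GL(n+1,\rea)$ which is the kernel of the nonconstant character $g\mapsto\det\ell(g)$ on $G$, a dimension count ($\dim SG = \dim G - 1$ while $\dim G - \dim G_{x} = n+1$) gives $\dim(SG\cdot x) = n$, so $SG\cdot x$ is open in the $n$-dimensional level set, and since that level set is connected (it is a level set of the convex function $F$ on $\Omega$), the two coincide. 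Alternatively one uses transitivity of $G$ directly: any $y$ in the level set is $gx$ for some $g\in G$, and then $\det\ell(g)=1$, so $g\in SG$. Either way $SG\cdot x$ is a level set of $\phi_{\Omega}$, hence a hyperbolic affine sphere, homogeneous under $SG$ and contained in a level set of $\phi_{\Omega}$, as claimed.

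I do not expect a genuine obstacle, because the analytic content is entirely carried by Theorem \ref{ahtheorem} together with the standard properties of $\phi_{\Omega}$; the only point deserving care is the positivity of the constant $c$, which is exactly where the convexity of $\Omega$ (equivalently, positive-definiteness of $\hess\log\phi_{\Omega}$) is used and which is what guarantees that the resulting affine spheres are hyperbolic rather than of indefinite or elliptic type.
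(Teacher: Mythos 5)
Your proof is correct and follows essentially the same route as the paper: derive $\H(\log\phi_{\Omega})=c\,\phi_{\Omega}^{2}$ with $c>0$ from $\Aut(\Omega)$-invariance and positive-definiteness of $\hess \log\phi_{\Omega}$, apply Theorem \ref{ahtheorem} to conclude the level sets are (hyperbolic, by local strict convexity and the sign in \eqref{ahmc}) proper affine spheres, and identify the $SG$-orbits with level sets via the relative invariance of $\phi_{\Omega}$. The only blemish is your \emph{alternative} orbit argument, where $\phi_{\Omega}(gx)=\phi_{\Omega}(x)$ gives only $|\det\ell(g)|=1$ rather than $\det\ell(g)=1$; your primary dimension-count/connectedness argument is sound and suffices.
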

\begin{proof}
Since $g\cdot \phi_{\Om} = \det \ell(g)\phi_{ \Om}$ for $g \in \Aut(\Om)$, by \eqref{hgf}, $\phi_{\Om}^{-2}\H(\log \phi_{\Om})$ is $\Aut(\Om)$ invariant, and so, because $\Om$ is homogeneous, must equal a constant $c \in \rea$; because $\hess(\log \phi_{\Om})$ is positive definite, $c > 0$. Since $\phi_{\Om}$ is homogeneous, this shows there is a $c > 0$ such that $\H(\log \phi_{\Om}) = c\phi_{\Om}^{2}$. By Theorem \ref{ahtheorem} the level sets of $\phi_{\Om}$ are the proper affine spheres (of Theorem \ref{cytheorem}) foliating the interior of $\Om$. From the relative invariance of $\phi_{\Om}$ it follows straightforwardly that the group $\Aut(\Om)\cap SL(n+1, \rea)$ acts transitively on the level sets of $\phi_{\Om}$, so these are homogeneous proper affine spheres. 
\end{proof}

For example, the affine spheres determined by examples \ref{hermitianex}, \ref{symex}, and \ref{qhermex} in the list above, were found in \cite{Calabi-completeaffine} by applying Theorem \ref{calabitheorem}. These three examples, and that of \ref{e6ex}, corresponding to the octonionic Hermitian matrices, are exactly the finite-dimensional simple formally real Jordan algebras (see Theorem V.$3.7$ of \cite{Faraut-Koranyi}). As is shown by Theorems \ref{ahtheorem} and \ref{pvtheorem}, the convexity plays no essential role. By Theorem $2.3$ of \cite{Faraut-Gindikin} there is a bijective correspondence associating with a semisimple real (necessarily unital) Jordan algebra the symmetric cone comprising the connected component containing the unit of the set of its invertible elements. This cone is the interior of the cone of squares, and is acted on transitively by the identity component of the structure group of the Jordan algebra, and so its complexification is a prehomogeneous vector space, which is irreducible if the Jordan algebra is simple. These are the regular irreducible prehomogeneous vector spaces of commutative parabolic type. The resulting Jordan algebra is either a real form of a simple complex Jordan algebra, or the underlying real Jordan algebra of a simple complex Jordan algebra. Theorem $5.12$ of R. Hildebrand's \cite{Hildebrand-parallelcubicform} shows that the homogeneous affine spheres obtained from simple real unital Jordan algebras via this observation and Theorem \ref{pvtheorem} are exactly the affine spheres indecomposable with respect to the Calabi product and having cubic form parallel with respect to the Levi-Civita connection of the equiaffine metric. The simplest example given by Theorem \ref{pvtheorem} that does not arise from Jordan algebras is \eqref{pvexample}. 

To conclude, it is explained how to give sense to the statement that the complexification of a homogeneous proper open convex cone $\Om$ is a regular prehomogeneous vector space, and, using this, to obtain the result of Vinberg that the square of its characteristic function is a rational function. This yields an abundance of rational solutions to an equation of the form \eqref{mai}. Since $\Om$ is connected the connected component $\Aut_{0}(\Om)$ of the identity in $\Aut(\Om)$ also acts transitively on $\Om$. Since $\Om$ is a cone, Propositions I.$13$ and I.$14$ of \cite{Vinberg}, show that the normalizer in $\Aff(n+1, \rea)$ of $\Aut_{0}(\Om)$ is a linear algebraic group and its connected component of the identity coincides with $\Aut_{0}(\Om)$. Hence $\Aut(\Om)$ is a finite index subgroup of some linear algebraic group $G \subset GL(\rea^{n+1})$. The orbits of $G$ are finitely many disjoint images of $\Om$. Hence the complexification $G^{\com}$ of $G$ acts on $\com^{n+1}$ prehomogeneously, in such a manner that the real points of the open orbit are the orbits of $G$. 
The action of $G$ need not preserve $\Om$. Rather, $G\Om$ is a finite union of disjoint linear images of $\Om$ by elements of $G$. Let $G_{0}, \dots, G_{r}$ be the connected components of $G$, where $G_{0}$ is the component of the identity. For $1 \leq i \leq r$ there is $g_{i} \in G_{i}$ such that $G_{i} = g_{i}G_{0}$. Then $\Om_{i} = G_{i}\Om = g_{i}G_{0}\Om = g_{i}\Om$ is a linear copy of $\Om$. The different $\Om_{i}$ need not be disjoint, but there are at most finitely many of them, $\Om_{0} = \Om, \Om_{1}, \dots, \Om_{s}$. It follows from the definition that for any $g \in \aff(n+1, \rea)$ fixing the vertex of $\Om$ there holds $(g \cdot \phi_{\Om})(x) = \det \ell(g)\phi_{g\Om}(x)$. Since $G$ contains positive dilations, $e^{t}g_{i}\Om = \Om_{i}$, and so, by replacing $g_{i}$ by $e^{t}g_{i}$, it can be assumed that $\det \ell(g_{i}) = \pm 1$. Then $\phi_{\Om_{i}}(g_{i}x) =  \phi_{g_{i}\Om}(g_{i}x) = \pm(g_{i}\cdot \phi_{\Om})(x)$. It follows that $\phi_{\Om}^{2}$ extends coherently to the union of the images $\Om_{i}$. Since this extended function $\phi_{\Om}^{2}$ is real analytic it can be extended to a complex analytic function defined on the open orbit of $G^{\com}$, and transforms in the manner necessary for it to be a relative invariant for the action of $G^{\com}$. It needs to be shown that $\phi_{\Om}^{2}$ is rational. By Proposition $2.11$ of \cite{Kimura}, the group of characters of relative invariants of a complex prehomogeneous vector space $\pv$ is equal to the group of characters of $G$ equal to $1$ on the isotropy group of a point of $\ste$ in the open orbit of $G$. In the present setting, since the isotropy subgroup $H$ in $G$ of $x_{0} \in \Om$ coincides with the isotropy subgroup in $\Aut(\Om)$, and this group $H$ preserves the positive definite bilinear form given by the Hessian of $\log \phi_{\Om}$ at $x_{0}$, this group $H$ is contained in the kernel of the character $\det^{2} \ell(g)$ of $G$, and so its complexification $H^{\com}$, which is the stabilizer in $G^{\com}$ of $x_{0}$, is likewise contained in the kernel of $\det^{2}\ell(g)$. It follows that there is a relative invariant of $(G^{\com}, \com^{n+1})$ corresponding to this character. By homogeneity some nonzero multiple of $\phi_{\Om}^{2}$ coincides with this relative invariant on the open orbit, showing that $\phi_{\Om}^{2}$ is rational. Since $\phi_{\Om}$ has nondegenerate Hessian on $\Om$, the same is true for its extension to the complexification, and so this shows that $(G^{\com}, \com^{n+1})$ is a regular prehomogeneous vector space. In particular, the extension of $\phi_{\Om}^{2}$ is expressible as a constant multiple of a product of (possibly negative) integer powers of irreducible polynomials defining the singular set of the action of $G^{\com}$. This proves the following theorem due to Vinberg.
\begin{theorem}[E.~B. Vinberg; section III.$4$ of \cite{Vinberg}]\label{vinbergtheorem}
The square of the characteristic function of a homogeneous proper open convex cone equals the restriction to the cone of a rational function.
\end{theorem}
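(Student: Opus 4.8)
The plan is to exhibit $\phi_{\Om}^{2}$ as the restriction to $\Om$ of a relative invariant of a complex prehomogeneous vector space, and then to invoke the fact that relative invariants are, by definition, rational functions. Throughout, $\phi_{\Om}$ is positively homogeneous of degree $-n-1$, so $\phi_{\Om}^{2}$ is homogeneous of degree $-2(n+1)$.

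First I would replace $\Aut(\Om)$ by an ambient linear algebraic group. Since $\Om$ is connected and homogeneous, $\Aut_{0}(\Om)$ already acts transitively; by Propositions I.$13$ and I.$14$ of \cite{Vinberg} its normalizer $G$ in $\Aff(n+1,\rea)$ is a linear algebraic group whose identity component is $\Aut_{0}(\Om)$, so $\Aut(\Om)$ is a finite-index subgroup of $G\subset GL(n+1,\rea)$. Next I would show the $G$-orbit of $\Om$ is a finite union $\Om=\Om_{0},\Om_{1},\dots,\Om_{s}$ of linear copies of $\Om$: listing the connected components $G_{0},\dots,G_{r}$ of $G$ and choosing $g_{i}\in G_{i}$ so that $G_{i}\Om=g_{i}\Om$, and then rescaling $g_{i}$ by a positive dilation (which lies in $G$), one may arrange $\det\ell(g_{i})=\pm1$.

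Second I would glue the translates of $\phi_{\Om}^{2}$. From the transformation law $(g\cdot\phi_{\Om})(x)=\det\ell(g)\,\phi_{g\Om}(x)$ for $g$ fixing the vertex, one obtains $\phi_{\Om_{i}}(g_{i}x)=\pm(g_{i}\cdot\phi_{\Om})(x)$; squaring removes the sign ambiguity, so the real-analytic functions $\phi_{\Om_{i}}^{2}$ agree on overlaps and patch to a single real-analytic function on the open set $\bigcup_{i}\Om_{i}$, still denoted $\phi_{\Om}^{2}$. This extends uniquely to a holomorphic function on the open orbit in $\com^{n+1}$ of the complexification $G^{\com}$, and the triple $(G^{\com},\com^{n+1})$ is prehomogeneous because that open orbit has the $G$-orbits of $\Om$ as its real points. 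By construction this holomorphic function is relatively invariant for $G^{\com}$ with character $(\det\ell)^{2}$.

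Third — the crux — I would show that the character $(\det\ell)^{2}$ carries a genuine, hence rational, relative invariant. By Proposition $2.11$ of \cite{Kimura}, the characters of $G^{\com}$ carrying relative invariants are exactly those trivial on the stabilizer $H^{\com}$ of a generic point $x_{0}$. Here $H=\mathrm{Stab}_{G}(x_{0})$ coincides with the stabilizer of $x_{0}$ in $\Aut(\Om)$, hence preserves the positive-definite form $\hess(\log\phi_{\Om})$ at $x_{0}$, so $H$, and therefore $H^{\com}$, lies in $\ker\bigl((\det\ell)^{2}\bigr)$. Thus $(G^{\com},\com^{n+1})$ admits a relative invariant $Q$ with character $(\det\ell)^{2}$, necessarily rational. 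Since $\phi_{\Om}^{2}$ and $Q$ are both homogeneous and transform by the same character, $\phi_{\Om}^{2}=cQ$ for some nonzero constant $c$ on the open orbit, so $\phi_{\Om}^{2}$ is the restriction to $\Om$ of a rational function; nondegeneracy of $\hess\phi_{\Om}$ on $\Om$ persists on the holomorphic extension, so $(G^{\com},\com^{n+1})$ is moreover regular. I expect the main obstacle to be precisely this passage from an $\Aut(\Om)$-invariant real-analytic object to a rational relative invariant: one must know that $G$ is algebraic (Vinberg's normalizer result), that $\phi_{\Om}^{2}$ rather than $\phi_{\Om}$ itself is the object that extends coherently, and that Kimura's character criterion is available to force rationality rather than mere holomorphy of the complex extension.
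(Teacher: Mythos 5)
Your proposal is correct and follows essentially the same route as the paper's proof: algebraize $\Aut(\Om)$ via Vinberg's normalizer result, glue the squares $\phi_{\Om_{i}}^{2}$ over the finite union of linear copies of $\Om$, complexify to get a regular prehomogeneous vector space, and use Kimura's character criterion together with the fact that the isotropy group preserves the positive definite Hessian of $\log\phi_{\Om}$ to produce a rational relative invariant proportional to $\phi_{\Om}^{2}$. The only differences are expository.
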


Since $\phi_{\Om}^{-2}$ has positive homogeneity $2(n+1)$ and is rational, it is possible that it is a polynomial, though it need not be so. For example, it follows from equation $(1)$ of section II.$2$ of \cite{Vinberg-automorphisms} or the explicit formula $(33)$ for $\phi_{\Om}$ in section III.$4$ of \cite{Vinberg} that for a self-dual homogeneous irreducible proper open convex cone $\Om$ the function $\phi_{\Om}^{-2}$ is a polynomial. More precisely, in this case $\phi_{\Om}^{-2}$ is a multiple of the determinant of the quadratic representation of the Euclidean Jordan algebra having $\Om$ as the interior of its cone of squares (see Propositions III.$4.2$ and III.$4.3$ of \cite{Faraut-Koranyi}). On the other hand, in \cite{Vinberg}, Vinberg constructed a $5$-dimensional homogeneous irreducible convex cone not linearly isomorphic to its dual, and the characteristic functions of this cone and its dual can be computed explicitly and their squared reciprocals are rational but not polynomial. 

\def\cprime{$'$} \def\cprime{$'$} \def\cprime{$'$} \def\cprime{$'$}
  \def\cprime{$'$} \def\cprime{$'$}
  \def\polhk#1{\setbox0=\hbox{#1}{\ooalign{\hidewidth
  \lower1.5ex\hbox{`}\hidewidth\crcr\unhbox0}}} \def\cprime{$'$}
  \def\Dbar{\leavevmode\lower.6ex\hbox to 0pt{\hskip-.23ex \accent"16\hss}D}
  \def\cprime{$'$} \def\cprime{$'$} \def\cprime{$'$} \def\cprime{$'$}
  \def\cprime{$'$} \def\cprime{$'$} \def\cprime{$'$} \def\cprime{$'$}
  \def\cprime{$'$} \def\cprime{$'$} \def\cprime{$'$} \def\cprime{$'$}
  \def\dbar{\leavevmode\hbox to 0pt{\hskip.2ex \accent"16\hss}d}
  \def\cprime{$'$} \def\cprime{$'$} \def\cprime{$'$} \def\cprime{$'$}
  \def\cprime{$'$} \def\cprime{$'$} \def\cprime{$'$} \def\cprime{$'$}
  \def\cprime{$'$} \def\cprime{$'$} \def\cprime{$'$} \def\cprime{$'$}
  \def\cprime{$'$} \def\cprime{$'$} \def\cprime{$'$} \def\cprime{$'$}
  \def\cprime{$'$} \def\cprime{$'$} \def\cprime{$'$} \def\cprime{$'$}
  \def\cprime{$'$} \def\cprime{$'$} \def\cprime{$'$} \def\cprime{$'$}
  \def\cprime{$'$} \def\cprime{$'$} \def\cprime{$'$} \def\cprime{$'$}
  \def\cprime{$'$} \def\cprime{$'$} \def\cprime{$'$} \def\cprime{$'$}
  \def\cprime{$'$} \def\cprime{$'$} \def\cprime{$'$} \def\cprime{$'$}
\providecommand{\bysame}{\leavevmode\hbox to3em{\hrulefill}\thinspace}
\providecommand{\MR}{\relax\ifhmode\unskip\space\fi MR }
\providecommand{\MRhref}[2]{%
  \href{http://www.ams.org/mathscinet-getitem?mr=#1}{#2}
}
\providecommand{\href}[2]{#2}

\end{document}